\theoremstyle{definition}
\newtheorem{defn}{Definition}[section]
\newtheorem{rem}[defn]{Remark}
\newtheorem{eg}[defn]{Example}
\theoremstyle{plain}
\newtheorem{thm}{Theorem}
\newtheorem{prop}[thm]{Proposition}
\newtheorem{lem}[defn]{Lemma} 
\newtheorem{cor}[thm]{Corollary}
\newtheorem{que}[defn]{Question}
\newtheorem{restate}{Theorem}
\newtheorem{restate2}{Theorem}
\newcommand{\Sl}{\text{sl}}
\newcommand{\msl}{\overline{\text{sl}}}
\newcommand{\Z}{\mathbb{Z}}
\newcommand{\sgn}{\text{sgn}}
\begin{document}
\title{Stability and triviality of the transverse invariant from Khovanov homology}
\author[D. Hubbard]{Diana Hubbard}
\author[C. Lee]{Christine Ruey Shan Lee}

\address[]{Department of Mathematics, Brooklyn College, Brooklyn, NY 11210}
\email[]{diana.hubbard@brooklyn.cuny.edu}

\address[]{Department of Mathematics and Statistics, University of South Alabama, Mobile AL 36608}
\email[]{crslee@southalabama.edu}

\graphicspath{{Pics/}}

\thanks{Hubbard was partially supported by an NSF RTG grant 1045119 and an AMS-Simons travel grant during the completion of this work. Lee was partially supported by an NSF postdoctoral fellowship DMS 1502860 and an NSF grant DMS 1907010.}

\begin{abstract}

We explore properties of braids such as their fractional Dehn twist coefficients, right-veeringness, and quasipositivity, in relation to the transverse invariant from Khovanov homology defined by Plamenevskaya for their closures, which are naturally transverse links in the standard contact $3$-sphere. For any $3$-braid $\beta$, we show that the transverse invariant of its closure does not vanish whenever the fractional Dehn twist coefficient of $\beta$ is strictly greater than one. We show that Plamenevskaya's transverse invariant is stable under adding full twists on $n$ or fewer strands to any $n$-braid, and use this to detect families of braids that are not quasipositive. Motivated by the question of understanding the relationship between the smooth isotopy class of a knot and its transverse isotopy class, we also exhibit an infinite family of pretzel knots for which the transverse invariant vanishes for every transverse representative, and conclude that these knots are not quasipositive.
\end{abstract}
\maketitle

\section{Introduction}

Khovanov homology is an invariant for knots and links smoothly embedded in $\mathbb{S}^{3}$ considered up to smooth isotopy. It was defined by Khovanov in \cite{khovanov1999categorification} to be a categorification of the Jones polynomial. Khovanov homology and related theories have had numerous topological applications, including a purely combinatorial proof due to Rasmussen {\cite{rasmussen2010khovanov} of the Milnor conjecture (for other applications see for instance \cite{ng2005legendrian} and \cite{kronheimer2011khovanov})}. In this paper we will consider Khovanov homology calculated with coefficients in $\mathbb{Z}$ and reduced Khovanov homology with coefficients in $\Z/2\Z$. 

Transverse links in the contact $3$-sphere are links that are everywhere transverse to the standard contact structure induced by the contact form $\xi_{st} = dz + r^{2} d \theta$. Bennequin proved in \cite{bennequin1983entrelacements} that every transverse link is transversely isotopic to the closure of some braid. Furthermore, Orevkov and Shevchisin \cite{orevkov2003markov}, and independently Wrinkle \cite{wrinkle2002markov}, showed that there is a one-to-one correspondence between transverse links (up to transverse isotopy) and braids (up to braid relations, conjugation, and positive stabilization). Hence we can study transverse links by studying braids. Plamenevskaya used this to observe in \cite{Pla06} that Khovanov homology can be used to define an invariant of transverse links. Given a braid $\beta$ whose closure $\widehat{\beta}$ is transversely isotopic to a transverse link $K$, she showed that there is a distinguished element $\widetilde{\psi}(\beta)$ in the Khovanov chain complex $CKh(\widehat{\beta})$ whose homology class $\psi(\beta)$ in the Khovanov homology of $K$ is a transverse invariant that encodes the classical self-linking number. Plamenevskaya also defined a version of this transverse invariant in reduced Khovanov homology, which we will denote as $\psi'(\beta)$, see Section \ref{subsec:redpsi}.

A transverse invariant is called \emph{effective} if it can distinguish between a pair of smoothly isotopic but not tranversely isotopic links with the same self-linking number. It is an open question whether $\psi$ is an effective transverse invariant. Several efforts \cite{lipshitz2015transverse, wu2008braids, hubbard2016annular, collari2017transverse} have been made to both understand the effectiveness of $\psi$ and to define new invariants related to $\psi$ in the hope that one of these would be effective. Thus far these efforts have not yielded any transverse invariants arising from Khovanov-type constructions that are known to be effective or not.  However, $\psi$ has other applications. For instance, $\psi$ and one of its refinements provide new solutions to the word problem in the braid group \cite{baldwin2015categorified}, \cite{hubbard2016annular}.

One of the goals in this paper is to explore the following question:

\begin{que}\label{psiproperties} Given a transverse knot $K$, what properties of $K$ does $\psi(K)$ detect?
\end{que}

In \cite{plamenevskaya2015transverse}, Plamenevskaya explored Question \ref{psiproperties} for another transverse invariant, $\hat\theta$, arising from knot Floer homology \cite{ozsvath2008legendrian}, which she computed using $\mathbb{Z}/2\mathbb{Z}$ coefficients. In contrast to $\psi$, $\hat\theta$ is known to be effective \cite{ng2008transverse}. Plamenevskaya showed that given a transverse link $K$ with a braid representative $\beta$, the behavior of $\hat\theta(K)$ is related to dynamical properties of $\beta$ when $\beta$ is viewed as acting on the $n$-punctured disk $D_{n}$.

\begin{thm}\label{RVPlamenevskaya} [\cite{plamenevskaya2015transverse}, Theorem 1.2] Suppose $K$ is a transverse knot that has a $3$-braid representative $\beta$. Every braid representative of $K$ is right-veering if and only if $\hat\theta(K) \neq 0$. 
\end{thm}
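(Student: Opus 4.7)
The plan is to prove the two implications separately, since they call for quite different tools. The forward implication (existence of a non-right-veering representative implies $\hat\theta(K) = 0$) should hold without any restriction on braid width, while the reverse implication is where the $3$-braid hypothesis enters decisively through Murasugi's classification.

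For the forward direction, since $\hat\theta$ is a transverse invariant, it suffices to fix any non-right-veering braid representative $\beta$ of $K$ and show $\hat\theta(\widehat\beta) = 0$. The plan is to convert the defining data for non-right-veering---a properly embedded arc $\alpha$ in the punctured disk $D_n$ that $\beta$ pushes to the left of itself---into a chain-level nullhomology of the cycle representing $\hat\theta$. Working with a Heegaard diagram for $\widehat\beta$ adapted to the braid axis, the left-veering arc should produce a holomorphic strip whose boundary is exactly the generator representing the transverse invariant, realizing it as a boundary in the knot Floer complex. Alternatively, one can invoke the Honda--Kazez--Mati\'c result that a non-right-veering monodromy gives an overtwisted contact structure on the corresponding open book, combined with a vanishing result for contact-type classes in the overtwisted setting, after identifying $\hat\theta$ with a LOSS-flavored contact invariant.

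For the reverse direction, I would use Murasugi's classification: every $3$-braid is conjugate to one of the normal forms $\Delta^{2k} \sigma_1^{p_1} \sigma_2^{-q_1} \cdots \sigma_1^{p_n}\sigma_2^{-q_n}$ with $p_i, q_i > 0$, or $\Delta^{2k} \sigma_2^{q}$, or $\Delta^{2k} \sigma_1^{p} \sigma_2^{-1}$. For each case one can compute the fractional Dehn twist coefficient and characterize right-veeringness of all braid representatives in terms of the exponents. Because transverse equivalence on $3$-braids is generated by conjugation and positive stabilization---and positive stabilization only enhances right-veeringness---the hypothesis reduces to checking the Murasugi normal form itself, cutting the problem down to a finite case analysis. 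For each right-veering normal form I would then compute $\hat\theta$ explicitly, either through a grid-diagram presentation of $\widehat\beta$ or by appealing to Baldwin's classification of contact structures supported by open books with $3$-braid monodromy, and verify nontriviality in $\widehat{HFK}$ with $\mathbb Z/2\mathbb Z$ coefficients.

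The main obstacle will be the reverse direction. Non-vanishing results in Heegaard Floer theory are intrinsically harder than vanishing ones, since one must rule out \emph{every} potential holomorphic bigon hitting the cycle, not just exhibit one. The delicate cases are the borderline right-veering ones, where the fractional Dehn twist coefficient sits right at the edge of the allowed range; here $\hat\theta$ must be shown to persist in homology without the cushion provided by a large positive-twist buffer, which is precisely where Baldwin's contact-geometric input on tightness of the associated open-book contact structure becomes indispensable.
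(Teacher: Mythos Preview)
This theorem is not proved in the present paper. It is quoted from Plamenevskaya's paper \cite{plamenevskaya2015transverse} (as Theorem 1.2 there) purely as background and motivation for the authors' own results about the Khovanov-theoretic invariant $\psi$. The statement concerns the knot Floer invariant $\hat\theta$, which lies entirely outside the techniques developed here; the authors cite it only to frame the analogous questions they then pursue for $\psi$. Consequently there is no proof in this paper to compare your proposal against.

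Your sketch is a plausible outline of how Plamenevskaya's original argument might go, and indeed the structural ingredients you name---the Honda--Kazez--Mati\'c criterion for the forward direction, Murasugi's classification plus Baldwin's work on $3$-braid open books for the reverse---are the right circle of ideas. But to actually assess your proposal you would need to consult \cite{plamenevskaya2015transverse} itself, not the present paper.
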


\begin{thm}\label{FDTCPlamenevskaya} [\cite{plamenevskaya2015transverse}, Theorem 1.3] Suppose $K$ is a transverse knot that has an n-braid representative $\beta$ with fractional Dehn twist coefficient $\tau(\beta) > 1$. Then $\hat\theta(K) \neq 0$.
\end{thm}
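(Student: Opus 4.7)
The plan is to translate the theorem into a statement about the Heegaard Floer contact element of an open book decomposition of $S^3$. Via the identification of $\hat\theta$ with the transverse refinement of a contact class (through the work of Ozsv\'ath--Szab\'o--Thurston and Baldwin--Vela-Vick), the closure $\widehat\beta$ is the binding of the open book with page $D_n$, the $n$-punctured disk, and monodromy $\beta$; this open book supports $\xi_{st}$, and $\hat\theta(\widehat\beta)$ is identified with a refinement of the contact invariant of this open book. It therefore suffices to prove that this refined class is nonzero.

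I would then exploit the hypothesis $\tau(\beta) > 1$ by writing $\beta = \Delta^2 \cdot \gamma$, where $\Delta^2$ is the positive full twist on all $n$ strands. Because $\Delta^2$ is central in $B_n$, the FDTC satisfies the exact identity $\tau(\Delta^2 \cdot \gamma) = 1 + \tau(\gamma)$, so $\tau(\gamma) > 0$, which implies $\gamma$ is right-veering on $D_n$ by Honda--Kazez--Mati\'c. Geometrically, the open book for $\widehat\beta$ is obtained from the open book for $\widehat\gamma$ by precomposing the monodromy with a positive Dehn twist along a curve parallel to the outer boundary of the page.

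The heart of the argument is to show that precomposing a right-veering monodromy with such a boundary-parallel positive Dehn twist keeps the refined contact element nonzero. The natural tool is the Ozsv\'ath--Szab\'o surgery exact triangle associated to the twist curve, which relates the knot Floer invariants of the two open books. One would track a cycle representative of $\hat\theta(\widehat\beta)$ through this triangle: right-veeringness of $\gamma$ is used to guarantee the twist curve is essential, and the positivity of the added Dehn twist should prevent cancellation of the cycle in homology. For $n=3$, Theorem~\ref{RVPlamenevskaya} already reduces the claim to right-veeringness, which we have established for $\gamma$ and hence (by preservation under the twist) for $\beta$; but for general $n$ right-veeringness alone is insufficient, and the full twist must enter essentially.

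The main obstacle is exactly this last step in arbitrary strand count. I expect the cleanest route is to work with a Heegaard diagram adapted to the open book of $\widehat\beta$, constructed by a controlled stabilization of one for $\widehat\gamma$ encoding the added $\Delta^2$. In such a diagram one can locate an explicit generator representing $\hat\theta(\widehat\beta)$ and verify by a local analysis of the Floer differentials—leveraging the positivity of the boundary-parallel twist and the right-veering property of $\gamma$—that this generator is a cycle which does not bound. Carrying this out, and ruling out the possibility that higher differentials cancel the class, is where the technical weight of the proof will lie.
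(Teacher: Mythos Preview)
This theorem is not proved in the present paper: it is stated in the introduction as a result of Plamenevskaya \cite{plamenevskaya2015transverse}, and the paper contains no argument for it. The authors cite it only as motivation and as a point of comparison for their own Theorem~\ref{3braidFDTC}, which is the analogous statement for the Khovanov-homology invariant $\psi$ restricted to $3$-braids. There is therefore no proof in this paper against which to compare your proposal.

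As to the proposal itself, it is an outline rather than a proof, and the outline has a genuine gap. You correctly factor $\beta = \Delta^{2}\gamma$ with $\tau(\gamma) > 0$ and observe that $\gamma$ is right-veering, but for $n > 3$ right-veeringness of $\gamma$ does not by itself give $\hat\theta(\widehat{\gamma}) \neq 0$; that implication is only available for $3$-braids via Theorem~\ref{RVPlamenevskaya}. Hence there is no established base case from which to propagate nonvanishing through your proposed surgery triangle. You acknowledge this yourself: the sentence ``Carrying this out, and ruling out the possibility that higher differentials cancel the class, is where the technical weight of the proof will lie'' is precisely where the entire content of the theorem resides. The proposal names a plausible strategy---relate $\hat\theta$ to an open-book contact invariant and analyze the effect of the boundary-parallel twist---but does not execute the step that would constitute a proof, and that step is not a routine local computation.
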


Informally, the fractional Dehn twist coefficient of a braid $\beta$ measures the amount of rotation $\beta$ effects on the boundary of the punctured disk $D_{n}$, see Section \ref{subsec:defnsofRVQPFDTC}. The fractional Dehn twist coefficient can be defined in general for elements in the mapping class group of any surface $\Sigma$ with a single boundary component. As all right-veering braids have fractional Dehn twist coefficient greater than or equal to $0$, see \cite{etnyre2015monoids}, Theorem \ref{FDTCPlamenevskaya} allows us to conclude that, roughly, ``most" right-veering braids have non-vanishing $\hat\theta$. Theorem \ref{FDTCPlamenevskaya} is similar in flavor to a previous result about contact structures: work of Honda, Kazez, and Mati{\'c} in \cite{honda2008right}, together with that of Ozsv{\'a}th and Szab{\'o} in \cite{ozsvath2004holomorphic} proves that a contact structure supported by an open book decomposition with connected binding where the pseudo-Anosov monodromy has fractional Dehn twist coefficient greater than or equal to one has non-vanishing Heegaard Floer twisted contact invariant.

In this paper we first consider the behavior of $\psi$ and $\psi'$ with respect to the property of being right-veering and the fractional Dehn twist coefficient, and study the extent to which the analogous statements of Theorem \ref{RVPlamenevskaya} and \ref{FDTCPlamenevskaya} hold. We note that several facts were already known relating the behavior of $\psi$ to properties of braids: that $\psi$ does not vanish for transverse links that have a quasipositive braid representative \cite{Pla06} and that it does vanish for transverse links with non-right-veering braid representatives \cite{baldwin2015categorified} and links with $n$-braid representatives that are negative stabilizations of an $(n-1)$-braid \cite{Pla06}. See Section \ref{subsec:defnsofRVQPFDTC} for more background. These properties also hold for $\psi'$.

A calculation (see Section  \ref{sec:appsofgenstability}) shows that the statement corresponding to Theorem \ref{RVPlamenevskaya} is not true for $\psi$ (nor $\psi'$): there exist right-veering $3$-braids, namely the family $\Delta^{2}\sigma_{2}^{-k}$ for sufficiently large $k \in \mathbb{N}$, for which $\psi$ and $\psi'$ vanish on their closures.

However, we show that the analogue of Theorem
\ref{FDTCPlamenevskaya} holds for 3-braids:

\begin{thm}\label{3braidFDTC} Suppose $K$ is a transverse knot that has a $3$-braid representative $\beta$ with fractional Dehn twist coefficient $\tau(\beta) > 1$. Then $\psi(K) \neq 0$ (when computed over $\mathbb{Q}$, $\mathbb{Z}$, and $\mathbb{Z}/2\mathbb{Z}$ coefficients) and $\psi'(K) \neq 0$.
\end{thm}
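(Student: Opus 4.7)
My plan hinges on Murasugi's classification of 3-braids up to conjugacy. Since the hypotheses and the conclusion depend only on the transverse isotopy class of $\widehat{\beta}$, I may assume that $\beta$ is in one of the three Murasugi normal forms: (a) $\Delta^{2k}\sigma_1^{p_1}\sigma_2^{-q_1}\cdots\sigma_1^{p_s}\sigma_2^{-q_s}$ with $s \geq 1$ and $p_i,q_i \geq 1$; (b) $\Delta^{2k}\sigma_2^n$ for some $n \in \mathbb{Z}$; or (c) $\Delta^{2k}(\sigma_1\sigma_2)^{\pm 1}$. Known formulas (due to Malyutin) give $\tau(\beta) = k$ in cases (a) and (b), and $\tau(\beta) = k \pm \tfrac{1}{3}$ in case (c), so the hypothesis $\tau(\beta) > 1$ forces $k \geq 2$ in the first two cases and $k \geq 2$ or $k = 1$ (with the $+$ sign) in the periodic case.

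I would then handle each case. Cases (b) with $n \geq 0$ and (c) produce positive, hence quasipositive, braids, so $\psi \neq 0$ and $\psi' \neq 0$ follow at once from Plamenevskaya's original theorem. In case (b) with $n < 0$, I would combine the centrality of $\Delta^{2}$ with the identity $\Delta^2\sigma_2^{-1} = \sigma_1\sigma_2\sigma_1\sigma_2\sigma_1$ to absorb negative generators into the available full twists; when $k$ is not too small relative to $|n|$, this rewrites $\beta$ as a positive braid, hence quasipositive. In case (a), I would first apply this absorption trick and its mirror for $\sigma_1^{-1}$ where needed; when the number of negative generators exceeds what can be directly absorbed, I would invoke the stability of $\psi$ and $\psi'$ under adding a full twist on three or fewer strands, proved earlier in this paper, to peel off copies of $\Delta^{2}$ without changing the vanishing status of the invariant, reducing the problem to a base braid amenable to Baldwin's explicit computation of the Khovanov homology of closures of 3-braids.

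The principal obstacle will be the pseudo-Anosov case (a) when the total number of negative generators greatly exceeds $2k$; here neither the absorption trick alone nor an elementary positivity argument suffices, and the argument must combine stability with an explicit verification that $[\widetilde{\psi}(\beta)]$ is a non-trivial class in the Khovanov homology of the reduced base braid. A secondary subtlety is establishing the conclusion simultaneously over $\mathbb{Q}$, $\mathbb{Z}$, and $\mathbb{Z}/2\mathbb{Z}$ coefficients, but both quasipositivity and the stability result are coefficient-independent, so any residual direct computation can be carried out with the ring at hand, and the corresponding verification for $\psi'$ is performed in reduced Khovanov homology with $\mathbb{Z}/2\mathbb{Z}$ coefficients.
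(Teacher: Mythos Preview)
Your overall framework---reduce to Murasugi normal form, compute the FDTC in each class, and then argue case by case---matches the paper, and your treatment of the classes that turn out positive or quasipositive is correct and essentially the same.

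The gap is in your handling of the pseudo-Anosov class (a), and implicitly class (b) with $n \ll 0$, when the total number of negative $\sigma_2$'s is large. You propose to ``peel off copies of $\Delta^{2}$'' using the stability theorem, but that result is an asymptotic statement: it guarantees only that beyond some threshold $N$ (depending on the rest of the braid) adding \emph{further} twists leaves the vanishing status unchanged. It does not let you \emph{remove} a full twist while preserving non-vanishing; peeling $\Delta^{2}$ off $\Delta^{4}\sigma_2^{-10}$, for instance, lands you at $\Delta^{2}\sigma_2^{-10}$, which has $\tau = 1$ and $\psi = 0$. Even if you stop peeling at $k=2$ you are still left with infinitely many braids $\Delta^{4}\sigma_1\sigma_2^{-a_1}\cdots\sigma_1\sigma_2^{-a_n}$, parametrized by arbitrary $a_i$, so there is no single ``base braid'' to hand to a finite computation; and knowing the Khovanov homology groups of a 3-braid closure does not by itself decide whether the specific cycle $\widetilde{\psi}$ is a boundary.

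The paper runs functoriality in the opposite direction. It first proves that the single model family $\Delta^{2}\sigma_1\sigma_2^{-k}$ has $\psi \neq 0$ and $\psi' \neq 0$ for \emph{every} $k \geq 0$, via a structural argument: these closures are quasi-alternating (Baldwin), hence Khovanov $\sigma$-thin (Manolescu--Ozsv\'ath), so the Rasmussen invariant equals the signature and $\Sl = s - 1$, whereupon the Baldwin--Plamenevskaya criterion forces non-vanishing over all the relevant coefficient rings. Then every braid in classes (a) and (b) with $d \geq 2$ is obtained from some $\Delta^{2}\sigma_1\sigma_2^{-k}$, with $k$ taken large, by \emph{inserting} positive crossings, and functoriality transports $\psi \neq 0$ forward. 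The ingredient your plan is missing is exactly this: a universal model family sitting at $d = 1$ together with a non-computational proof that $\psi$ never vanishes on it.
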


Our second result shows a ``stability" property of $\psi$ under adding a sufficient number of negative or positive twists on any number of strands to an arbitrary braid word. 

\begin{thm} \label{thm:stab} Let $L$ be any closed braid $\widehat{\beta \alpha^{\pm}}$ with $\beta$ of strand number $b$ and $\alpha^{\pm}$ of strand number $2\leq a<b$ consisting of positive/negative sub-full twists 
$$\alpha^{\pm} =  (\sigma_{i}^{\pm} \sigma_{i+1}^{\pm}\cdots \sigma^{\pm}_{i+a-2})^a. $$ where $1 \leq i \leq b-a+1$.  Denote by $L^m_{\pm}$ the closed braid $\widehat{\beta (\alpha^{\pm})^m}$. There is some $N$ for which we have that for all $m > N$, $\psi(L_{\pm}^m) =0$  if and only if  $\psi(L_{\pm}^{m+1})=0$. Similarly,  $\psi'(L_{\pm}^m) = 0$ if and only if  $\psi'(L_{\pm}^{m+1})=0$.
\end{thm}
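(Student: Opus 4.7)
The plan is to show that for $m$ sufficiently large, the Khovanov complex $CKh(\widehat{\beta\alpha_\pm^m})$ stabilizes in the bigrading carrying the Plamenevskaya cycle, so that the vanishing status of $\psi$ (and $\psi'$) is preserved as $m$ increases.

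I would first focus on the positive case $\alpha_+$. Since $\alpha_+$ is a full twist on the $a$ strands indexed $i,\ldots,i+a-1$, every crossing in $\alpha_+$ is positive and its oriented resolution is the identity tangle. Consequently, the Seifert circle pattern at the all-oriented vertex of the cube of resolutions for $\widehat{\beta\alpha_+^m}$ is the same collection of circles as for $\widehat\beta$, independently of $m$, and the Plamenevskaya cycle $\widetilde\psi(\beta\alpha_+^m)$ living at that vertex has the same local description via the standard assignment on each circle. Its absolute bigrading shifts by a fixed increment per additional copy of $\alpha_+$, determined entirely by the writhe of the added crossings.

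Next, I would invoke a stabilization result for Khovanov homology under iterated full twists, in the spirit of the categorified Jones--Wenzl projectors of Cooper--Krushkal and the convergence theorems of Rozansky and Willis: after normalizing by the writhe of $\alpha_+^m$, the complex $CKh(\widehat{\beta\alpha_+^m})$ becomes, in each fixed bigrading window, eventually independent of $m$. This yields chain maps $CKh(\widehat{\beta\alpha_+^m}) \to CKh(\widehat{\beta\alpha_+^{m+1}})$ that induce isomorphisms on $Kh$ in the bigrading of $\widetilde\psi$ once $m$ exceeds some threshold $N$. To transfer vanishing statements through these isomorphisms I would verify, at the chain level, that such a map sends $\widetilde\psi(\beta\alpha_+^m)$ to a unit multiple of $\widetilde\psi(\beta\alpha_+^{m+1})$, using that all inserted crossings are oriented-resolved at both the source and target vertex. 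The negative case $\alpha_-$ is treated in parallel, with the oriented resolution now being the $1$-smoothing of each negative crossing and the stabilization applied to the mirror tangle. The reduced statement for $\psi'$ follows by the same analysis in the reduced Khovanov complex, since every construction respects the reduced subcomplex and the reduced Plamenevskaya cycle admits the analogous local description.

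The main obstacle is adapting the stabilization from the global full-twist setting to the present setting, where $\alpha_\pm$ is a full twist on only $a < b$ strands sitting inside a larger braid. The correct formulation is a tangle-level stabilization: the Bar-Natan tangle complex of the $a$-strand full twist converges to the $a$-strand categorified projector, and horizontal composition with the tangle from $\beta$ followed by closure preserves the stabilization in each bounded bigrading window. Establishing this, and carefully tracking the Plamenevskaya cycle through the composition and closure, is the technical heart of the argument.
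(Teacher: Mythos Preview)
Your proposal takes a genuinely different route from the paper, and as written it has a real gap.

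The paper's proof is entirely elementary. For the negative case it adds one extra crossing $\sigma_{i+r-1}^{-1}$ at a time and applies the skein long exact sequence \eqref{eq:ssesn} at that crossing. The key point is a concrete isotopy: taking the $0$-resolution at that crossing produces a cap which can be slid through the remaining copies of $\sigma_i^{-1}\cdots\sigma_{i+a-2}^{-1}$, reducing the positive crossing count of $D_0^k$ by at least $\ell$ (Lemma~\ref{l.exceed}). A short inequality then shows $-u-1>n_+(\widetilde{D_0^k})$ once $\ell>n_+^\beta(D^k)$, so $Kh^{-u}_*(D_0^k)=Kh^{-u-1}_*(D_0^k)=0$ and the long exact sequence collapses to an isomorphism $Kh^0_{j+1}(D_1^k)\cong Kh^0_j(D^k)$ carrying $\widetilde\psi$ to $\widetilde\psi$. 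Iterating crossing-by-crossing gets from $m$ to $m+1$. The positive case is the mirror argument, and the reduced case follows because the same homological-grading bounds hold for $\overline{Kh}$. In particular the paper obtains an explicit $N$ depending only on $n_\pm$ of $\beta$.

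By contrast, you propose to import the Cooper--Krushkal/Rozansky/Willis stabilization machinery for iterated full twists. That is a plausible line of attack, but two things are genuinely missing. First, the cited convergence results are stated for full twists on all strands (or for the tangle complex converging to the categorified projector); you yourself identify that passing to a sub-full twist embedded in a larger braid and then closing up requires a tangle-level statement plus a compatibility argument under horizontal composition and closure, and you do not supply one. Closure can mix homological gradings nontrivially, so ``stabilization in each bounded bigrading window'' at the tangle level does not automatically yield stabilization at $(i,j)=(0,\Sl)$ for the closed link. Second, even granting stabilization of the homology group, you still need that the comparison map sends $\widetilde\psi$ to a unit times $\widetilde\psi$; this requires identifying the map concretely, which in the projector framework is not immediate. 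The paper's skein-sequence map is visibly the inclusion sending $\widetilde\psi$ to $\widetilde\psi$, so this issue never arises there.

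In short: the paper gets the result with one long exact sequence and a picture, with an explicit threshold $N$; your outline would require substantial additional work to make precise, and as it stands the crucial step---that the projector-type stabilization actually controls the specific bigrading $(0,\Sl)$ after composing with $\beta$ and closing---is asserted rather than proved.
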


The behavior of $\psi$ has several topological applications \cite{Pla06, baldwin2010khovanov}. However, in many cases $\psi$ is very difficult to compute. Theorem \ref{thm:stab} implies that, given a vanishing/non-vanishing result for $\psi$ of a braid closure, we can immediately extend it to an infinite family obtained by adding sub-full twists.
Furthermore, we have concrete bounds for $N$ based on the number of negative or positive crossings in $\beta$. Throughout the rest of the paper, we will refer to these sorts of ``full" twists on fewer than the full number of strands as ``sub-full" twists. This theorem means that after adding a large enough number of sub-full twists, the transverse invariant stabilizes. This echoes the results by \cite{CK05} which demonstrates a stability behavior of the Jones polynomial of a braid under adding full twists on any number of strands, and \cite{Sto07} which considers stability in the Khovanov homology of infinite torus braids. 

Note that Theorem \ref{thm:stab} is immediate if the twists we add are full twists on $b$ strands instead of sub-full twists. Indeed, adding sufficiently many positive full twists to any braid will result in a positive braid, which has non-vanishing $\psi$. Similarly, adding sufficiently many negative full twists to any braid will result in a non-right-veering braid, which has vanishing $\psi$, and the non-right-veeringness will be preserved under adding even more negative twists. 

As an application of Theorem \ref{thm:stab}, we find many examples of braids which are right-veering but not quasipositive, see Section \ref{sec:appsofgenstability}, which shows that $\psi$ may be used to detect quasipositivity. In general, it is of interest - particularly to contact geometers - to detect braids that are right-veering but not quasipositive.  Indeed, this was a main theme of the work of Honda, Kazez, and Mati\'c in \cite{honda2008right}: the idea is that the difference between right-veering and quasipositive braids reflects the difference between tight and Stein fillable contact structures. These braids are often constructed by adding sub-full negative twists to right-veering braids.

For $4$-braids we have:

\begin{prop}\label{4braidexample} There exist families of $4$-braids, $\alpha_{k} = \Delta^{2}\sigma_{2}^{-k}$,  $\beta_{k} = \Delta^{2}\sigma_{3}^{-k}$, and $\eta_{k} = \Delta^{2}(\sigma_{2}\sigma_{3})^{-k}$ such that for $k \in \mathbb{N}$:
\begin{enumerate}
\item $\tau(\alpha_{k}) = \tau(\beta_{k}) = \tau(\eta_{k}) = 1$.
\item $\hat\theta(\widehat{\alpha_{k}}) \neq 0$, $\hat\theta(\widehat{\beta_{k}}) \neq 0$, $\hat\theta(\widehat{\eta_{k}}) \neq 0$.  (Plamenevskaya, proof of Theorem \ref{FDTCPlamenevskaya} in \cite{plamenevskaya2015transverse}).
\item For $k \geq 12$, $\alpha_{k}$, $\beta_{k}$, and $\eta_{k}$ are not quasipositive \footnote{This is due to a simple calculation of the writhe. These braids may be not quasipositive for some values of $k < 12$.}, and $\psi/\psi'(\alpha_{k}) \neq 0$, $\psi/\psi'(\beta_{k}) \neq 0$, but $\psi/\psi'(\eta_{k})=0$. 
 
\end{enumerate}
\end{prop}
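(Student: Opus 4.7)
The plan attacks the four claims in order. For Part (1), I use that the fractional Dehn twist coefficient $\tau$ on $B_n$ is a homogeneous quasimorphism that is additive across central elements and satisfies $\tau(\Delta^2)=1$. The remaining ingredient is that any braid whose geometric support lies in a sub-disk of $D_n$ disjoint from $\partial D_n$ has $\tau=0$: applied to the Dehn twist $\sigma_i^2$ this gives $\tau(\sigma_i^k)=0$, and applied to the three-strand full twist $(\sigma_2\sigma_3)^3\subset B_4$ it gives $\tau((\sigma_2\sigma_3)^k)=0$ by homogeneity. Multiplying by $\Delta^2$ then yields $\tau=1$ for all three families. Part (2) is cited directly from Plamenevskaya's proof of Theorem \ref{FDTCPlamenevskaya}.

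For Part (3), non-quasipositivity is a writhe count. A quasipositive braid word has writhe equal to the number of conjugate factors in its quasipositive factorization, so its writhe is non-negative, and zero only for the trivial braid. One computes $w(\alpha_k)=w(\beta_k)=12-k$ and $w(\eta_k)=12-2k$. For $k\geq 13$ the writhes of $\alpha_k,\beta_k$ are strictly negative; at $k=12$ they vanish, but $\alpha_{12}$ and $\beta_{12}$ are non-trivial because $\Delta^2$ lies in $Z(B_4)$ while $\sigma_2^{12}$ and $\sigma_3^{12}$ do not. For $\eta_k$ with $k\geq 12$ the writhe is already at most $-12$.

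The $\psi/\psi'$ claims follow from Theorem \ref{thm:stab} applied in three flavors. For $\alpha_k$ take $a=2$ and $\alpha^-=\sigma_2^{-2}$. In the even-$k$ case the base is $\beta=\Delta^2$: this is positive, so $\psi(\widehat{\Delta^2})\neq 0$ by Plamenevskaya's original quasipositivity result, and Theorem \ref{thm:stab} propagates non-vanishing along $\widehat{\Delta^2\sigma_2^{-2m}}=\widehat{\alpha_{2m}}$. In the odd-$k$ case take base $\beta=\Delta^2\sigma_2^{-1}$ and verify $\psi\neq 0$ by a direct chain-level computation for a single small base representative. The same strategy covers $\beta_k$. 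For $\eta_k$ take $a=3$ and $\alpha^-=(\sigma_2^{-1}\sigma_3^{-1})^3$, which equals $(\sigma_2\sigma_3)^{-3}$ since the three-strand full twist $(\sigma_2\sigma_3)^3$ is central in $\langle\sigma_2,\sigma_3\rangle$. For each residue class $k\equiv r\pmod 3$ with $r\in\{0,1,2\}$, verify $\psi(\widehat{\eta_{r+3m_0}})=0$ at one small $m_0$ and then apply Theorem \ref{thm:stab} to propagate vanishing.

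The main obstacle is the base-case Khovanov computation, especially the vanishing statement for $\eta$. Exhibiting the distinguished cycle $\widetilde\psi$ as a boundary in the Khovanov chain complex of a specific closure requires either a software computation or an explicit bounding cochain; the odd-$k$ non-vanishing bases for $\alpha_k,\beta_k$ have the same flavor. Once the base cases are secured, Theorem \ref{thm:stab} propagates along the entire families, with the effective constant $N$ controlled by the crossing count of the base braid, and the $k\geq 12$ threshold in the proposition is then inherited from the writhe obstruction in Part (3).
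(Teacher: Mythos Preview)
Your arguments for Part~(1), Part~(2), and the writhe obstruction in Part~(3) are fine; the paper handles Part~(1) via the $\sigma_i$-free bound (Proposition~\ref{FDTCestimate}) rather than the sub-disk support argument, but the two are equivalent here.

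The genuine gap is in your treatment of the non-vanishing claims $\psi(\alpha_k)\neq 0$ and $\psi(\beta_k)\neq 0$. Theorem~\ref{thm:stab} only asserts the equivalence $\psi(L^m_-)=0\Leftrightarrow\psi(L^{m+1}_-)=0$ for $m>N$, with $N$ depending on the positive-crossing count of the base braid. Verifying $\psi(\widehat{\Delta^2})\neq 0$ at $m=0$ tells you nothing about the stable range, and functoriality runs the wrong way: resolving a negative crossing \emph{preserves} non-vanishing, so it lets you pull non-vanishing \emph{down} from large $k$, not push it up from $k=0$. The first line of the paper's Table~\ref{table:qprv} is already a counterexample to your strategy: the $3$-braid $\Delta^2\sigma_2^{-k}$ has $\psi(\Delta^2)\neq 0$ (positive braid) yet $\psi=0$ for all $k>4$. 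Exactly the same mechanism could, a priori, kill $\psi$ in your $4$-braid families, and your argument does not rule this out.

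The paper avoids this by anchoring the base case \emph{inside} the stable range. For $\beta_k$ it computes $\psi'(\beta_9)\neq 0$ with Baldwin's program and then runs the long exact sequence argument (equivalently, stability with an explicit $N$) for $k\geq 10$, with functoriality covering $k<9$. For $\alpha_k$ it does not use stability at all: it appeals to functoriality from the $3$-braid family $\Delta^2\sigma_1\sigma_2^{-k}$ of Theorem~\ref{FDTCfamily}, where $\psi\neq 0$ is already known for \emph{every} $k$. Your outline is missing exactly this: a verified non-vanishing instance at some $k$ at or beyond the effective threshold $N$ produced by Theorem~\ref{thm:stab}.

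For the vanishing claim on $\eta_k$ your plan would work, but it is more elaborate than necessary. Since adding negative crossings preserves $\psi=0$ (the contrapositive of functoriality), a single verified vanishing at one small $k_0$ suffices for all $k\geq k_0$; there is no need for residue classes modulo $3$ or for Theorem~\ref{thm:stab}. The paper does this by exhibiting an explicit bounding cochain for $\widehat{\sigma_1\sigma_2\sigma_3\sigma_3\sigma_2\sigma_1\sigma_3^{-3}}$ and invoking the braid identity $\Delta^2(\sigma_2\sigma_3)^{-3}=\sigma_1\sigma_2\sigma_3\sigma_3\sigma_2\sigma_1$.
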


Proposition \ref{4braidexample} allows us to conclude that an infinite collection of non-quasipositive $4$-braids with fractional Dehn twist coefficient greater than one have non-vanishing $\psi$. Using functoriality allows us to conclude that any braid that has a word of the form $\Delta^{2} \sigma$ where $\sigma$ contains only positive powers of $\sigma_{1}$ and $\sigma_{2}$ but arbitrarily many negative powers of $\sigma_{3}$ has non-vanishing $\psi$. Many such braids are not quasipositive, and thus $\psi$ does not primarily detect quasipositivity. We remark that in general, it is not known whether sufficiently large fractional Dehn twist coefficient guarantees non-vanishing $\psi$. For instance, it may be that $n$ being large enough for the braid $\triangle^{2n}(\sigma_2\sigma_3)^{-k}$ guarantees $\psi(\triangle^{2n}(\sigma_2\sigma_3)^{-k}) \not=0$ regardless of $k$.  The behavior of $\psi$ and $\psi'$ for the family $\eta_{k}$ allows us to conclude that $\hat\theta$ and $\psi$ can differ for braids with more than three strands.

A different perspective on Question \ref{psiproperties} is whether one can characterize \emph{smooth} link types for which \emph{every} transverse representative has vanishing $\psi$. In some sense, this question is asking about properties of smooth link types in which $\psi$ has no chance of distinguishing between distinct transverse representatives. Notice that every link type has infinitely many distinct transverse representatives, and \emph{some} transverse representative for which $\psi$ vanishes. For instance, one can always negatively stabilize a braid $\beta$ to yield $\beta'$ with $\psi(\beta')=0$, another braid representative of the link represented by $\hat{\beta}$.  The transverse link $\hat{\beta'}$ is not transversely isotopic to $\hat{\beta}$ as their self-linking numbers differ by two \cite{orevkov2003markov}, \cite{wrinkle2002markov}.

One way to explore this question is by examining the relationship between the Khovanov homology of a smooth link type $L$ and its maximal self-linking number, see Definition \ref{d.msl}.  This quantity is of natural interest since it provides bounds on several topological link invariants, including the slice genus, see  \cite{rudolph1993quasipositivity} and \cite{Ng08}. The distinguished element $\widetilde{\psi}$ in the Khovanov chain complex of a transverse representative $\hat\beta$ of $L$ lives in homological grading $0$ and quantum grading the self-linking number of $\hat\beta$. We have the following immediate observation.

\begin{rem} \label{rem:msl0} Suppose the maximal self-linking number of $L$ is $n$. If every nontrivial homology class in homological grading $0$ of the Khovanov homology of $L$ has quantum grading strictly greater than $n$, then $\psi$ vanishes for every transverse representative of $L$. 
\end{rem}

\begin{eg}\label{Ngexample}  According to Proposition 4 of \cite{ng2012arc}, the mirror of the knot $11n33$, which we denote $\overline{11n33}$, has maximal self-linking number $-7$. Using the Khovanov polynomial for $\overline{11n33}$ in KnotInfo \cite{knotinfo}, we see that in homological grading $0$, the Khovanov homology of $\overline{11n33}$ is empty for $q$-grading less than $-3$. We can conclude that every transverse representative of $\overline{11n33}$ has vanishing $\psi$.
\end{eg} 

We describe an infinite family of 3-tangle pretzel knots for which $\psi=0$ for every transverse representative by Remark \ref{rem:msl0}. In particular, we give conditions on the parameters of pretzel knots that guarantee that $\psi$ vanishes for every transverse representative of $L$ using the bound on the maximal self-linking number by Franks-William \cite{FW87}, Morton \cite{Mor86}, and Ng \cite{ng2005legendrian} from the HOMFLY-PT polynomial.

\begin{thm} \label{thm:homflypretzel}
Let $K=P(r, -q,  -q)$ be a pretzel knot with $q>0$ odd and $r\geq 2$ even, then $\psi = 0$ for every transverse link representative of $K$. 
\end{thm}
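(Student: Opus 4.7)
The plan is to apply Remark~\ref{rem:msl0}. It suffices to show that, under the hypotheses $r\geq 2$ even and $q\geq 1$ odd, every nontrivial class in $\Kh^{0}(K)$ for $K=P(r,-q,-q)$ lies in quantum grading strictly greater than the maximal self-linking number $\msl(K)$. The proof then splits into (i) an upper bound on $\msl(K)$ and (ii) a lower bound on the minimum quantum grading carrying a nontrivial class in $\Kh^{0}(K)$.

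\emph{Upper bound on $\msl(K)$.} First I would apply the HOMFLY-PT inequality of Franks--Williams, Morton, and Ng to bound $\msl(K)$ above by a suitable extremal $v$-degree of the HOMFLY-PT polynomial $P_K(v,z)$. To extract that degree, one can either quote a pretzel formula for $P_K$, or iterate the HOMFLY skein relation at a crossing inside one of the two $-q$ tangles; the recursion terminates at two-bridge knots and (connect sums with) torus links, both of which have explicit HOMFLY-PT polynomials. The parity conditions on $r$ and $q$ are what make the recursion stabilize; the output is an explicit value $E(r,q)$ with $\msl(K)\leq E(r,q)$.

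\emph{Lower bound on the quantum support of $\Kh^{0}(K)$.} Next I would analyze $\Kh^{0}(K)$ by induction on $q$, using the Khovanov unoriented long exact sequence at a crossing in a $-q$ tangle. The two resolutions give simpler pretzel links, which are handled inductively or via (quasi-)alternating thinness results when they apply. Tracking the quantum-grading shifts at each step of the long exact sequence, and identifying which connecting maps are surjective onto the low-grading part, produces an explicit lower bound $Q(r,q)$ on the minimum quantum grading carrying a nontrivial class in $\Kh^{0}(K)$. A purely arithmetic check that $Q(r,q)>E(r,q)$ for all admissible $r,q$ then finishes the proof via Remark~\ref{rem:msl0}.

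\emph{Main obstacle.} The hardest step will be controlling the Khovanov long exact sequence across the induction: tracking supports alone does not rule out the creation of a new low-quantum class in $\Kh^{0}$ at each step, so one must argue that the relevant connecting maps are injective (respectively surjective) to exclude such a class. A secondary subtlety is aligning the normalization conventions for the HOMFLY-PT polynomial, the self-linking bound, and the quantum grading of Plamenevskaya's cycle $\widetilde{\psi}$, so that the inequality $Q(r,q)>E(r,q)$ really is the one called for by Remark~\ref{rem:msl0}.
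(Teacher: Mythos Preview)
Your overall strategy is exactly the one the paper uses: invoke Remark~\ref{rem:msl0} by bounding $\msl(K)$ above via the Franks--Williams--Morton HOMFLY-PT inequality, and bounding the quantum support of $\Kh^0(K)$ below. The difference lies in how each bound is extracted.

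For the HOMFLY-PT part, the paper applies the skein relation at a crossing in the $r$-tangle rather than in a $-q$-tangle. This choice makes the induction cleaner: switching the crossing yields $P(r-2,-q,-q)$ (and, at the base $r=2$, the connected sum $T_{2,-q}\#T_{2,-q}$), while the oriented resolution is always $T_{2,-2q}$ regardless of $r$. One then reads off $\deg_a P_K = 2+r+2q$ directly, giving $\msl(K)\le -r-2q-3$. Your proposed recursion in a $-q$-tangle can also be made to work, but the paper's choice avoids juggling two simultaneously varying parameters.

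The real simplification is in the Khovanov step. You propose an induction on $q$ via the long exact sequence, and you correctly identify that controlling the connecting maps is the main obstacle. The paper sidesteps this entirely with a one-line diagrammatic argument: the standard pretzel diagram $D$ of $K$ is \emph{negative} (all crossings negative after orienting), so the only Kauffman state contributing to homological grading $i=0$ is the all-$1$ state. Moreover the all-$1$ state graph has no one-edged loops (i.e.\ $D$ is plus-adequate), which forces $\Kh^0(K)$ to be supported in exactly the two quantum gradings $j=|s_1(D)|-n_-(D)$ and $j=|s_1(D)|-n_-(D)-2$, namely $j=1-2q$ and $j=-1-2q$. Since $-1-2q > -r-2q-3$ for $r\ge 2$, Remark~\ref{rem:msl0} applies immediately. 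No long exact sequences, no thinness, no induction on $q$ are needed; the negativity of $K$ does all the work.
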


We conclude that every such pretzel knot has no quasipositive braid representatives and hence is not quasipositive, see Corollary \ref{cor:pretzelQP} and the following discussion. Preliminary computational evidence based on the braid representatives of $K$ from the program by Hilary Hunt available at \url{https://tqft.net/web/research/students/HilaryHunt/} \cite{Hilaryhunt} implementing the Yamada-Vogel algorithm suggests that the fractional Dehn twist coefficient of this family may always be less than or equal to one. This is also true for another braid representative $\sigma_1\sigma_1\sigma_1\sigma_2^{-1}\sigma_1^{-1}\sigma_1^{-1}\sigma_3\sigma_2\sigma_2\sigma^{-1}_4\sigma_3\sigma^{-1}_4$\cite{knotinfo} for $\overline{11n33}$ from Example \ref{Ngexample} which in fact has fractional Dehn twist coefficient equal to $0$. It seems possible that this is a characterization of links for which every single braid representative has $\psi=0$, and we will address this question in the future.

\subsection*{Organization} This paper is organized as follows: we give the preliminary background on Khovanov homology, reduced Khovanov homology, and the transverse invariant in Section \ref{sec:bg}, and we prove Theorem \ref{3braidFDTC} in Section \ref{sec:3-braids}. Theorem \ref{thm:stab} is proven in Section \ref{sec:genstability}, and we collect a few examples and prove Proposition \ref{4braidexample} in Section \ref{sec:appsofgenstability}. Finally, we prove Theorem \ref{thm:homflypretzel} in Section \ref{maxselflinking}, where the necessary results on the maximal self-linking number and the HOMFLY-PT polynomial are summarized.

\subsection{Acknowledgments} Both authors would like to thank John Baldwin for his computer program computing $\psi'$ available at \url{https://www2.bc.edu/john-baldwin/Programs.html} as it was very helpful. We would also like to thank Hilary Hunt for her computer program implementing the Yamada-Vogel algorithm. In addition both authors would like to acknowledge the database KnotInfo, and KnotAtlas, which makes available the KnotTheory package. The first author would also like to acknowledge helpful private correspondence with Olga Plamenevskaya and Peter Feller and several interesting discussions with Adam Saltz and John Baldwin. The second author would like to thank Matt Hogancamp for interesting conversations on the stable Khovanov homology of torus knots. 

\section{Background} \label{sec:bg}
In this section we will set our conventions and briefly review Khovanov homology, the transverse invariant defined by Plamenevskaya \cite{Pla06}, and standard tools used for computing the invariant. We will also review the definitions of quasipositivity, right-veering, and the fractional Dehn twist coefficient.

\subsection{Khovanov homology} The readers may refer to \cite{Bar02}, \cite{Tur17} for excellent introductions to the subject. 
Given a crossing in an oriented link diagram $D$, a Kauffman state chooses the $0$-resolution or the $1$-resolution as depicted in the following figure, which replaces the crossing by a set of two arcs.
\begin{figure}[ht]
\centering
\def\svgwidth{.3\columnwidth}
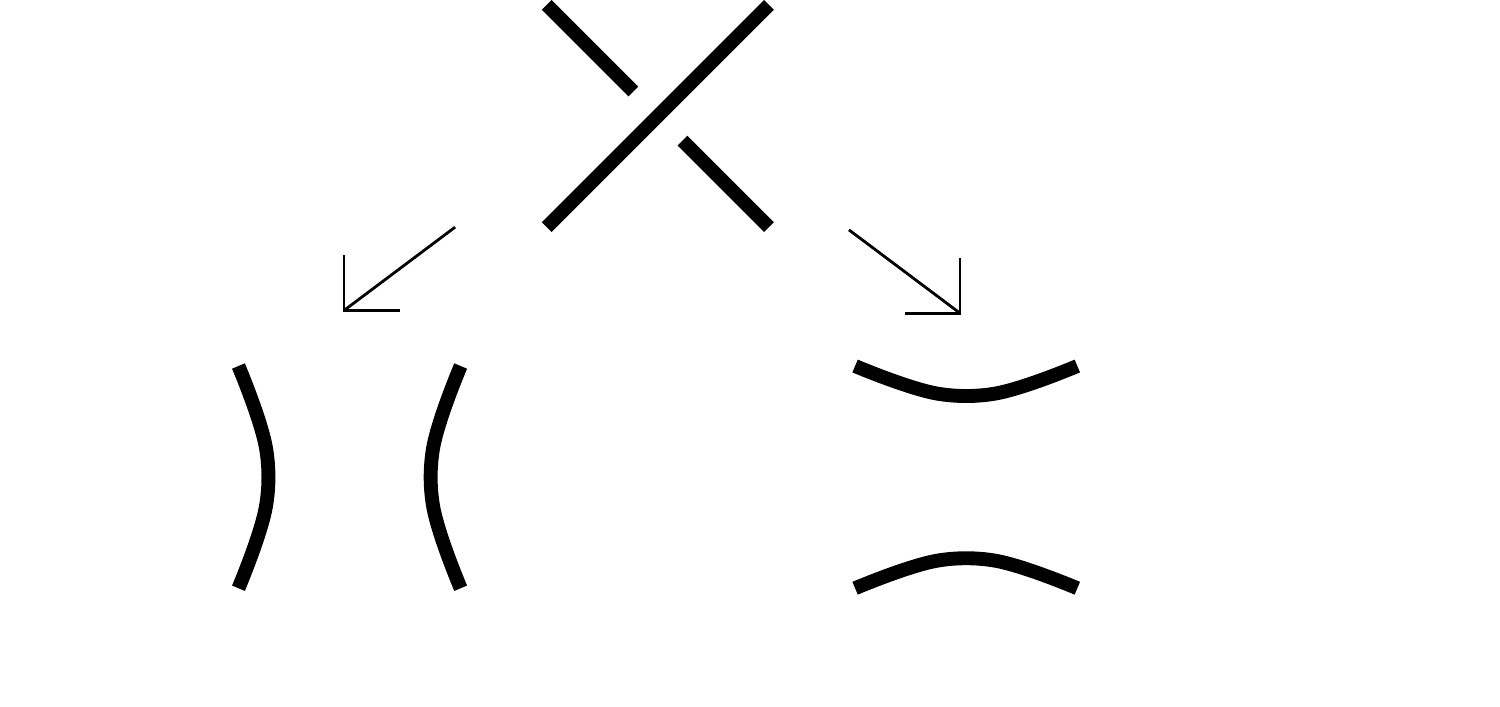
\end{figure} 
Number the crossings of $D$ from $1,\ldots, n$. Each Kauffman state $\sigma$ on $D$ can be represented by a string of 0 and 1's in $\{0, 1\}^n$ where $0$ at the $i$th position means that the $0$-resolution is chosen at the $i$th crossing of $D$ and similarly $1$ at the $i$th position means that the $1$-resolution is chosen at the $i$th crossing. 

The bi-graded chain complex $CKh(D)$ is generated by a direct sum of $\mathbb{Z}$-vector spaces associated to a Kauffman state $\sigma$. 
\[CKh(D) := \bigoplus_{\sigma} CKh(D_{\sigma}), \]
where $CKh(D_{\sigma})$ is defined as follows. Let $s_{\sigma}(D)$ be the set of disjoint circles resulting from applying the Kauffman state $\sigma$ to $D$, and let $|s_{\sigma}(D)|$ be the number of circles. Then 
\[CKh(D_{\sigma}) := V^{\otimes|s_{\sigma}(D)|}, \]
where $V$ is the free graded $\mathbb{Z}$-module generated by two elements $v_-$ and $v_+$ with grading $p$ such that $p(v_{\pm}) = \pm 1$. The grading is extended to the tensor product by the rule $p(v\otimes v') = p(v) + p(v')$. 

Let $r(\sigma)$ be the number of 1's in the string in $\{0, 1\}^n$ representing a Kauffman state $\sigma$. Two gradings $i,  j$ are defined on $CKh(D)$ as follows. The \emph{homological grading} $i$ is defined by $i(v) = r(\sigma)- n_-(D)$, where $\sigma$ is the state giving rise to the vector space $V^{\otimes |s_{\sigma}(D)| }$ containing $v$, while the \emph{quantum grading} $j$ is defined by $j(v) = p(v) + i(v) + n_+(D)-n_-(D)$, where $n_{+}(D)$ and $n_{-}(D)$ are the number of positive and negative crossings in $D$, respectively.

We shall indicate the $\Z$-vector space with bi-grading $(i,j)$ in $CKh(D)$ as $CKh^i_j(D)$. 
For the differential of the chain complex, we first define a map $d_c$ on $CKh(D)$ from $\sigma$ to $\sigma_c$:
\[d_c: V^{\otimes|s_{\sigma}(D)|} \rightarrow V^{\otimes|s_{\sigma_c}(D)|},  \]
where $\sigma$ and $\sigma_c$ differ in their resolution at exactly one crossing $c$ where $\sigma$ chooses the $0$-resolution and $\sigma_c$ chooses the $1$-resolution. From $\sigma$ to $\sigma_c$, either two circles merge into one or a circle splits into two. In the first case, the map $d_c$ contracts $V\otimes V$, representing the pair of circles in $s_{\sigma}(D)$, to $V$, representing the resulting circle in $s_{\sigma_c}(D)$  by the merging map $m$ as defined below. 
\begin{align*}
&m(v_+\otimes v_+) = v_+ \\ 
&m(v_+\otimes v_-)= m(v_-\otimes v_+) = v_- \\ 
&m(v_-\otimes v_-) = 0.
\end{align*}

In the other case where a circle splits into two, $d_c$ is given by the splitting map $\triangle$ taking $V\rightarrow V\otimes V$ as follows. 
\begin{align*}
&\triangle(v_+) = v_+\otimes v_- + v_-\otimes v_+ \\ 
&\triangle(v_-) = v_-\otimes v_-.
\end{align*}

Now on $CKh^i_{*}(D_{\sigma})$ the differential $d$ is defined by 
\[d = \sum_{c \text{ \ crossing in $D$ on which $\sigma$ chooses the 0-resolution}} (-1)^{\sgn(\sigma, \sigma_c)}d_c.\] 
We extend $d$ by linearity. The sign $\sgn(\sigma, \sigma_c)$ is chosen so that $d\circ d = 0$; for instance one can choose that $\sgn(\sigma, \sigma_c)$ is the number of 1's in the string representing $\sigma$ before $c$. The resulting homology groups $Kh(D)$ are independent of this choice. 

Khovanov \cite{khovanov1999categorification} defined and showed that $Kh(D)$ is independent of the diagram chosen for the link $L$, so $Kh(L) = Kh(D)$ is a link invariant. 
\subsubsection{Reduced Khovanov homology with $\Z/2\Z$-coefficients} \label{ss.reduced}
Given an oriented link diagram $D$ with a marked point on a link strand, consider the Khovanov complex $CKh(D)$. A circle in a state $s_{\sigma}(D)$ is marked if it contains the marked point. We denote the marked circle of a state $\sigma$ by $\sigma(m)$. Consider the sub-complex 
\[CKh(D, -):=  \bigoplus_{\sigma} V_1 \otimes \cdots \otimes V_{\sigma(m)-1} \otimes  v_- \otimes V_{\sigma(m)+1}\cdots \otimes V_{|s_\sigma(D)|}, \] where $v_-$ is the element in the vector space assigned to $\sigma(m)$, and let $\overline{CKh}(D)$ be the quotient complex $CKh(D)/CKh(D, -)$. Reduced Khovanov homology is generated by $\overline{CKh}(D)$ with the differential $d$ of $CKh(D)$ descending to the differential $d'$ on the quotient complex. When the vector space $CKh(D)$ is generated as a direct sum of $\Z/2\Z$-vector spaces, $\overline{Kh}(D)$ is an invariant of the link represented by $L$, independent of the placement of the marked point \cite{Kho03}. Thus, we will denote by $\overline{Kh}(L)$ the reduced Khovanov homology of $L$ with $\Z/2\Z$-coefficients. We will also adopt the convention of shifting the quantum grading by -1, see for example \cite{baldwin2010khovanov} so that the homology of the unknot is at grading $i=0, j=0$ in this theory.

\subsection{The transverse element} Here we follow the conventions of Plamenevskaya \cite{Pla06} except for a minor change in notation.  In her paper the bi-grading is indicated as $Kh_{i, j}$, whereas in this paper the homological grading is placed on top as $Kh_j^i$. 

Let $\beta$ be a braid representative of a link $L$ giving a closed braid diagram $\hat{\beta}$ of $L$. Consider the \emph{oriented resolution}, the Kauffman state $\sigma_{\beta}$ of $\hat{\beta}$ where we take the 0-resolution for each positive crossing and the 1-resolution for each negative crossing. 

\begin{defn} The \emph{transverse invariant} of a closed braid representative $\widehat{\beta}$ of $L$, denoted by $\psi(\hat{\beta})$, is the homology class in $Kh(L)$ of the following element in the vector space associated to $\sigma_{\beta}$:
\[ \widetilde{\psi}(\beta) := v_- \otimes v_- \otimes \cdots \otimes v_- \in V^{\otimes|s_{\sigma_{\beta}}(\hat{\beta})|}=CKh(\hat{\beta}_{\sigma_{\beta}}).\]
\end{defn} 
Plamenevskaya has shown that this is, up to sign, a well-defined homology class in $Kh(L)$ \cite[Proposition 1]{Pla06} under transverse link isotopy. Thus $\psi(\hat{\beta})$ is a transverse link invariant which lies in $ Kh^0_{\Sl(\hat{\beta})}(L)$, where $\Sl(\hat{\beta})$ is the \emph{self-linking number} of a transverse link represented by a closed braid $\hat{\beta}$ defined as follows. 
\begin{defn}\label{defn:selflinking} The \emph{self-linking number} of the transverse link $\hat{\beta}$ is given by
\[\Sl(\hat{\beta})= -b + n_+(\hat{\beta})-n_{-}(\hat{\beta}).\]
Note $b$ is the strand number of $\beta$.
\end{defn}
To simplify the notation, we will omit the hat $`` \ \hat{} \ "$ in $\psi(\hat{\beta})$ and simply write $\psi(\beta)$.

\subsubsection{The reduced version}\label{subsec:redpsi}
In the reduced setting, the transverse invariant of a closed braid representative $\hat{\beta}$ of $L$, denoted by $\psi'(\beta)$, is the homology class (up to sign) in $\overline{Kh}(L)$ of the following element in the vector space associated to $\sigma_{\beta}$ 
\[ \widetilde{\psi}'(\beta) := v_- \otimes \cdots \otimes v_+ \otimes \cdots \otimes v_- \in V^{\otimes|s_{\sigma_{\beta}}(\hat{\beta})|}=\overline{CKh}(\hat{\beta}_{\sigma_{\beta}}), \] where $v_+$ corresponds to the element in the vector space $V$ associated to the marked circle of $\sigma_{\beta}$. Note that $\widetilde{\psi}'(\beta)$ lives in quantum grading $\Sl(\hat{\beta}) + 1$.

\subsection{Functoriality and properties of $\psi$.} 
Using the map on Khovanov homology induced by a cobordism between a pair of links, Plamenevskaya proved the following useful result for computing $\psi$. 

\begin{thm}{\cite[Theorem 4]{Pla06}} Suppose that the transverse link $\hat{\beta}^-$ represented by the closure of the braid $\beta^-$ is obtained from another transverse link $\hat{\beta}$, also represented by a closed braid,  by resolving a positive crossing (note that it has to be the 0-resolution). Let $S$ be the resolution cobordism, and $f_S: Kh(\hat{\beta}) \rightarrow Kh(\hat{\beta}^-)$ be the associated map on homology, then 
\[f_S(\psi(\beta)) = \pm \psi(\beta^-). \] 
\end{thm}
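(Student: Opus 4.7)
The plan is to identify the chain map $f_S$ induced by the resolution cobordism with a natural projection in $CKh(\hat\beta)$ and then observe that this projection carries $\tilde\psi(\beta)$ to $\tilde\psi(\beta^-)$. The key conceptual point is that the oriented resolution of $\hat\beta$ already takes the $0$-smoothing at every positive crossing, including the one being resolved, so $\tilde\psi(\beta)$ is supported on a Kauffman state that also appears in $\hat\beta^-$ as the oriented resolution.

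Let $c$ denote the positive crossing of $\hat\beta$ being resolved. First I would split the underlying vector space $CKh(\hat\beta)$ as $C_0 \oplus C_1$, where $C_i$ is spanned by Kauffman states taking the $i$-smoothing at $c$. Because the Khovanov differential only changes $0$-resolutions to $1$-resolutions, $C_1$ is a subcomplex and $C_0$ is a quotient complex, yielding a short exact sequence
\begin{equation*}
0 \to C_1 \hookrightarrow CKh(\hat\beta) \twoheadrightarrow C_0 \to 0.
\end{equation*}
The states of $\hat\beta$ with $0$ at $c$ are in natural bijection with states of $\hat\beta^-$ (they yield the same resolved planar diagrams, with the same circle labels), and this bijection identifies $C_0$ with $CKh(\hat\beta^-)$ as chain complexes up to an overall quantum shift of $1$, reflecting $n_+$ decreasing by one after removing $c$.

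Next I would invoke the standard fact from the Bar-Natan cobordism formalism that the chain map induced by the $0$-resolution saddle $S$ is, up to sign, exactly this projection $CKh(\hat\beta) \twoheadrightarrow C_0 \cong CKh(\hat\beta^-)\{1\}$. As a sanity check, the projection shifts quantum grading by $-1$, matching $\chi(S) = -1$, and leaves homological grading unchanged. With the identification in hand, the computation is immediate: $\tilde\psi(\beta) = v_- \otimes \cdots \otimes v_-$ lies in $C_0$ by construction, and under the projection it is sent to the element $v_- \otimes \cdots \otimes v_-$ supported at the oriented resolution state of $\hat\beta^-$, which is $\tilde\psi(\beta^-)$ by definition. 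Passing to homology then gives $f_S(\psi(\beta)) = \pm \psi(\beta^-)$, and both sides live in bigrading $(0, \Sl(\hat\beta^-)) = (0, \Sl(\hat\beta)-1)$.

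The main obstacle is the second step: justifying that $f_S$ really is the projection rather than some other chain map compatible with the saddle cobordism. With the standard sign conventions this is a routine bookkeeping exercise tracking $\sgn(\sigma, \sigma_c)$ through Bar-Natan's description of cobordism-induced maps, and the result is only claimed up to sign because of the usual sign ambiguity in the definition of $\psi$ itself.
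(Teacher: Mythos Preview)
The paper does not actually prove this theorem; it is quoted as a background result from \cite{Pla06} and used as a black box (under the name ``functoriality'') throughout. So there is no ``paper's own proof'' to compare against here.

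That said, your argument is correct and is essentially Plamenevskaya's original proof: the saddle cobordism at a positive crossing induces, on the chain level, the quotient map $CKh(\hat\beta)\twoheadrightarrow C_0\cong CKh(\hat\beta^-)\{1\}$, and since the oriented resolution already takes the $0$-smoothing at every positive crossing, $\widetilde\psi(\beta)\in C_0$ is sent to $\widetilde\psi(\beta^-)$. The only point you flag as an obstacle---that the cobordism map really is this projection---is exactly the content of the standard identification of saddle maps in Khovanov's TQFT (or Bar-Natan's formalism), and is not controversial. Your grading check and the remark about the sign ambiguity are also in order.
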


A consequence of this is that if $\psi(\beta) =0$ then $\psi(\beta^-)=0$. Similarly, suppose that $\hat{\beta}^+$ is obtained from $\hat{\beta}$ by resolving a negative crossing, then $\psi(\beta) \not=0$ implies that $\psi(\beta^+) \not=0$. When we use this property in our computations, we will often cite it as ``functoriality".  Furthermore, this property holds for both the unreduced and reduced versions of the transverse element in the corresponding versions of Khovanov homology.

\subsection{Skein exact sequence} 
Let $D$ be a link diagram and let $D_0$ and $D_1$ be link diagrams differing locally in the 0-resolution and the 1-resolution, respectively, at a negative crossing $c$ of $D$. 
Let $D_1$ inherit the orientation from $D$ and let $u = n_-(D_0) - n_-(D)$ be the difference in the number of negative crossings of the two diagrams, where we pick an orientation on $D_0$. Consider the short exact sequence given by the following maps. 
\[\alpha: CKh^{i}_{j+1}(D_1) \rightarrow CKh_j^i(D) \text{ and } \gamma: CKh^i_j(D) \rightarrow CKh^{i-u}_{j-3u-1}(D_0), \] where $\alpha$ is induced by inclusion, and $\gamma$ is induced by the quotient map.  

 We have the induced long exact sequence below \cite{watson2007knots}, also called the ``skein exact sequence." See \cite{Ras05} for an alternate formulation using the oriented skein relation for the Jones polynomial.
\begin{equation} \label{eq:ssesn} \cdots \rightarrow Kh^{i}_{j+1}(D_1) \stackrel{\alpha}{\rightarrow} Kh^i_{j}(D) \stackrel{\gamma}{\rightarrow} Kh^{i-u}_{j-3u-1}(D_0) \rightarrow Kh^{i+1}_{j+1}(D_1) \rightarrow \cdots.  \end{equation}

For a chosen positive crossing $c$ and with $u = n_-(D_1) - n_-(D)$, we have instead
\begin{equation} \label{eq:ssesp} \cdots \rightarrow Kh^{i-u-1}_{j-3u-2}(D_1) \stackrel{\alpha}{\rightarrow} Kh^i_{j}(D) \stackrel{\gamma}{\rightarrow} Kh^{i}_{j-1}(D_0) \rightarrow Kh^{i-u}_{j-3u-2}(D_1) \rightarrow \cdots.  \end{equation}

These grading shifts can be understood by first considering the shifts of the maps in the exact sequences \emph{before} incorporating the final shifts in $i$ of $-n_{-}$ and in $j$ of $n_{+}-2n_{-}$, and then incorporating those final shifts carefully, keeping in mind that the number of positive and negative crossings in the different diagrams is not the same. Note that the same long exact sequence will hold for reduced Khovanov homology, and over different coefficients.

\subsection{Quasipositivity, right-veeringness, and the fractional Dehn twist coefficient}\label{subsec:defnsofRVQPFDTC}

\begin{defn} A \emph{quasipositive} $n$-braid is a braid that can be expressed as a product of conjugates of the standard positive Artin generators $\sigma_{1}, \ldots, \sigma_{n-1}$. 
\end{defn}

A link is called quasipositive if it is the closure of a quasipositive braid. One reason that quasipositive knots are of interest is that their slice genus can be computed from one of their quasipositive braid representatives \cite{rudolph1993quasipositivity}. While there are obstructions to quasipositivity, there are no known algorithms for determining whether a given link is quasipositive or not. 

We now define the concept of a ``right-veering" braid. To do this we consider the action of the $n$-braid monodromy on the disk $D_{n}$ with $n$ punctures. (Recall that the braid group $B_{n}$ is naturally isomorphic to the mapping class group of $D_{n}$, see for instance \cite{birman_brendle_braids}.) We call an arc in $D_{n}$ starting at a point on $\partial D_{n}$ and ending at a puncture while avoiding all other punctures simply an ``arc on $D_{n}$".  We say that an arc $\eta$ is ``to the right" of an arc $\gamma$ in $D_{n}$ if, after pulling tight to eliminate non-essential intersections, $\eta$ and $\gamma$ originate from the same point on $\partial D_{n}$, and the pair of tangent vectors $(\dot{\eta}, \dot{\gamma})$ at their initial point induces the original orientation on the disk.  See \cite{baldwin2015categorified} and \cite{plamenevskaya2015transverse} for more details. With this terminology in place, we have:

\begin{defn} An $n$-braid is \emph{right-veering} if, under the action of the braid, every arc on $D_{n}$ is sent either to an arc isotopic to itself or to an arc to the right of itself. 
\end{defn}

All quasipositive braids are right-veering, but not all right-veering braids are quasipositive. Recall from the introduction that detecting braids that are right-veering but not quasipositive is generally of interest - see \cite{honda2008right}. 

Finally, we discuss the fractional Dehn twist coefficient, which we will often abbreviate from here on out as the FDTC. If $h$ is any element of the mapping class group of a surface with one boundary component, we denote its FDTC by $\tau(h)$. It roughly measures the amount of twisting effected by the mapping class about the boundary component of the surface. The concept first appeared (though in quite different language) in the work of Gabai and Oertel in \cite{gabai_oertel}. We give here a non-classical definition for braids involving a left order on the braid group as it requires little background. There are several other more geometrically flavored definitions that generalize easily beyond braids to more general mapping class groups. For instance, one way to define the FDTC involves using lifts of the braid to the universal cover of the punctured disk to define a map $\Theta: B_{n} \to \widetilde{Homeo^{+}(S^{1})}$; the FDTC is then defined to be the translation number of $\Theta$.  For a more thorough discussion of this and alternate definitions, see \cite{malyutin2004writhe},  \cite{ito_kawamuro_FDTC}, and \cite{plamenevskaya2015transverse}. 

First, a $\sigma_{i}$-positive $n$-braid is one that, for some $i$ such that $1 \leq i < n$, can be written with no $\sigma_{j}^{\pm 1}$'s for $j < i$ and only positive powers of $\sigma_{i}$. We say that a braid $\beta \in B_{n}$ is Dehornoy positive, that is, $\beta > 1$, if it can be written as a $\sigma_{i}$-positive word. Dehornoy proved in \cite{dehornoy} that this can be used to define a total left-order on the braid group (an order on all of the elements of the braid group that is invariant by multiplication on the left) via the following: we say $\alpha < \beta$ if $\alpha^{-1}\beta > 1$. This order is often called the Dehornoy order on the braid group.

The element $(\sigma_{1} \cdots \sigma_{n-1})^{n}$ is referred to as the full twist in the braid group $B_{n}$, and is denoted by $\Delta^{2}$. The existence of the Dehornoy order on the braid group implies that for every braid $\beta \in B_{n}$, there is a unique integer $m$ such that $\Delta^{2m} \leq \beta < \Delta^{2m+2}$. We denote $m$ as $\lfloor \beta \rfloor$. Malyutin observed in \cite{malyutin2004writhe} that:

\begin{defn}
The fractional Dehn twist coefficient is, for each $\beta \in B_{n}$: $$\tau(\beta) = \displaystyle\lim_{k \to \infty} \frac{\lfloor \beta^{k} \rfloor}{k}.$$
\end{defn}
The following proposition summarizes some basic properties of the FDTC:

\begin{prop}\label{FDTCprops} [See for instance \cite{hedden_mark}.] For any two braids $\alpha$, $\beta$ in $B_{n}$, we have:
\begin{itemize}
\item $|\tau(\alpha\beta) - \tau(\alpha) - \tau(\beta)| \leq 1$.
\item $\tau(\alpha^{-1}\beta\alpha) = \tau(\beta)$.
\item $\tau(\beta^{n}) = n\tau(\beta)$.
\item $\tau(\Delta^{2}\beta) = 1 + \tau(\beta)$. 
\end{itemize}
\end{prop}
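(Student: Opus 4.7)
The plan is to derive all four items directly from the Dehornoy-order definition $\tau(\beta)=\lim_{k\to\infty}\lfloor \beta^k\rfloor/k$, using two structural facts about $B_n$: the Dehornoy order is left-invariant, and the full twist $\Delta^2$ is central. Together these force $\lfloor\cdot\rfloor$ to behave like a quasi-morphism at the finite level, from which the stated asymptotic properties of $\tau$ follow.

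The main lemma I would establish first is
\[
\lfloor \alpha\rfloor + \lfloor \beta\rfloor \;\leq\; \lfloor \alpha\beta\rfloor \;\leq\; \lfloor \alpha\rfloor + \lfloor \beta\rfloor + 1.
\]
Starting from $\Delta^{2\lfloor\beta\rfloor}\leq \beta < \Delta^{2\lfloor\beta\rfloor+2}$, left-multiply by $\alpha$ (using left-invariance) to obtain $\alpha\Delta^{2\lfloor\beta\rfloor}\leq \alpha\beta < \alpha\Delta^{2\lfloor\beta\rfloor+2}$, and then use centrality of $\Delta^2$ to rewrite as $\Delta^{2\lfloor\beta\rfloor}\alpha \leq \alpha\beta < \Delta^{2\lfloor\beta\rfloor+2}\alpha$. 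Combining this with $\Delta^{2\lfloor\alpha\rfloor}\leq \alpha < \Delta^{2\lfloor\alpha\rfloor+2}$ (after multiplying on the left by the appropriate central power of $\Delta^2$) sandwiches $\alpha\beta$ between $\Delta^{2(\lfloor\alpha\rfloor+\lfloor\beta\rfloor)}$ and $\Delta^{2(\lfloor\alpha\rfloor+\lfloor\beta\rfloor+2)}$, which is exactly the displayed inequality.

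Items (2)-(4) then follow directly from this finite-level lemma. For item (3), $(\beta^n)^k=\beta^{nk}$ gives $\tau(\beta^n)=\lim_k \lfloor\beta^{nk}\rfloor/k = n\lim_m \lfloor\beta^m\rfloor/m = n\tau(\beta)$. For item (4), centrality of $\Delta^2$ yields $(\Delta^2\beta)^k = \Delta^{2k}\beta^k$, and left-multiplying a braid by the central element $\Delta^{2k}$ shifts the Dehornoy bounds by $k$, so $\lfloor \Delta^{2k}\beta^k\rfloor = k+\lfloor \beta^k\rfloor$; dividing by $k$ produces $\tau(\Delta^2\beta)=1+\tau(\beta)$. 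For item (2), applying the finite-level lemma twice produces $|\lfloor \alpha^{-1}\beta^k\alpha\rfloor -\lfloor \alpha^{-1}\rfloor - \lfloor \beta^k\rfloor - \lfloor \alpha\rfloor|\leq 2$; since $(\alpha^{-1}\beta\alpha)^k=\alpha^{-1}\beta^k\alpha$ and the $\alpha^{\pm 1}$ contributions do not scale with $k$, dividing by $k$ and letting $k\to\infty$ gives $\tau(\alpha^{-1}\beta\alpha)=\tau(\beta)$.

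The main obstacle is item (1): bootstrapping the finite-level quasi-additivity to the same bound with the precise constant $1$ for the homogenized $\tau$ takes some care, because $(\alpha\beta)^k\neq \alpha^k\beta^k$ in a nonabelian group. The cleanest route is to invoke Malyutin's homomorphism $\Theta:B_n\to \widetilde{\mathrm{Homeo}^+(S^1)}$ and identify $\tau$ as the pullback of the translation (rotation) number, which is a classical homogeneous quasi-morphism of defect $1$; this immediately transfers the bound to $\tau$. An alternative, more hands-on approach is a Fekete-style homogenization argument while carefully tracking constants. Since the proposition is standard, I would cite \cite{hedden_mark} for the precise form of item (1) and focus original writing on items (2)-(4).
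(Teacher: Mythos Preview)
The paper does not actually prove this proposition: it is stated with the citation ``See for instance \cite{hedden_mark}'' and used as a black box. So there is no in-paper proof to compare against; the relevant question is only whether your argument stands on its own.

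Your derivations of items (2)--(4) from the finite-level inequality $\lfloor\alpha\rfloor+\lfloor\beta\rfloor\le\lfloor\alpha\beta\rfloor\le\lfloor\alpha\rfloor+\lfloor\beta\rfloor+1$ are correct, and your proof of that inequality from left-invariance of the Dehornoy order together with centrality of $\Delta^2$ is fine. For item (1) you correctly identify the genuine issue (noncommutativity prevents a direct passage from the floor inequality to defect $1$ for $\tau$) and your two suggested resolutions are both standard and valid: identifying $\tau$ with the translation number via $\Theta$ and invoking the classical defect-$1$ bound for rotation numbers, or a careful Fekete-type homogenization. Since the paper itself simply cites the literature here, deferring the sharp constant in item (1) to \cite{hedden_mark} or to Malyutin is entirely in the spirit of how the paper treats this proposition.
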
 

We also have the following result due to Malyutin.

\begin{prop}\label{FDTCestimate} [Malyutin, \cite{malyutin2004writhe}, Lemma 5.4 and Proposition 13.1]  If a braid $\beta \in B_{n}$ is represented by a word that contains $r$ occurrences of $\sigma_{i}$ and $s$ occurrences of $\sigma_{i}^{-1}$ for some $i \in \{1, \ldots, n-1\}$, then $-s \leq \tau(\beta) \leq r$. In particular, if a braid word $\beta \in B_{n}$ is $\sigma_{i}$-free (meaning: it contains no $\sigma_{i}$ or $\sigma_{i}^{-1}$) for some $i \in \{1,\ldots,n-1\}$, then $\tau(\beta)$ is $0$.
\end{prop}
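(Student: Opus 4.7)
The plan is to prove the proposition by first establishing the ``in particular'' statement ($\sigma_i$-free braids have $\tau = 0$), and then using this as the main input to derive the general inequality. The vanishing statement is technically the case $r = s = 0$ of the main inequality, but isolating it first both clarifies the structure and provides the key ingredient for the general estimate.

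For the $\sigma_i$-free case, if $\beta$'s word contains no $\sigma_i^{\pm 1}$ then $\beta$ lies in the subgroup $B_n^{(i)} \leq B_n$ generated by $\{\sigma_j : j \neq i\}$, which is naturally isomorphic to $B_i \times B_{n-i}$ with the two factors acting on the first $i$ and last $n-i$ punctures of $D_n$. Geometrically, $\beta$ admits a representative diffeomorphism preserving (up to isotopy) an essential arc separating these two groups of punctures. This invariant arc forces the induced action on $\partial D_n$, once lifted to the universal cover, to have translation number zero, so by the standard identification of the FDTC with this translation number, $\tau(\beta) = 0$. Purely algebraically, one observes that $\beta^k \in B_n^{(i)}$ for all $k$ while $\Delta^{2m} \notin B_n^{(i)}$ for any $m \neq 0$, so the Dehornoy integer parts $\lfloor \beta^k \rfloor$ remain uniformly bounded, giving $\tau(\beta) = \lim_k \lfloor \beta^k \rfloor / k = 0$.

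For the general inequality, write $\beta = w_0\, \sigma_i^{\epsilon_1}\, w_1 \cdots \sigma_i^{\epsilon_N}\, w_N$ with $N = r+s$, each $w_j$ $\sigma_i$-free, and $\sum \epsilon_k = r - s$. The aim is to bound $\lfloor \beta^k \rfloor$ directly in the Dehornoy order, uniformly in $k$: each of the $rk$ positive letters $\sigma_i$ in the word for $\beta^k$ can contribute at most one unit to the integer part, the $sk$ negative letters contribute at most zero, and the $\sigma_i$-free fillers contribute nothing by the previous step. Thus $\lfloor \beta^k \rfloor \leq rk + C$ for a constant $C$ independent of $k$, and dividing by $k$ and passing to the limit yields $\tau(\beta) \leq r$. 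The lower bound $\tau(\beta) \geq -s$ is immediate by applying the same argument to $\beta^{-1}$ (whose word has $s$ positive and $r$ negative $\sigma_i$'s) together with the identity $\tau(\beta^{-1}) = -\tau(\beta)$, which itself follows from homogeneity combined with the quasi-morphism bound applied to $\beta^n (\beta^{-1})^n = e$.

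The principal difficulty lies in the last step. A naive iteration of the defect-$1$ quasi-morphism inequality from Proposition \ref{FDTCprops} introduces an additive error of order $r+s$, which is far too weak. Achieving the sharp bound requires a global argument --- either the direct Dehornoy normal form sketched above, or a geometric analysis tracking how each letter $\sigma_i^{\pm 1}$ shifts the rotation number of the lifted boundary action --- that crucially exploits the \emph{exact} vanishing (not merely bounded defect) contributed by the $\sigma_i$-free fillers. This global step is the heart of Malyutin's original argument.
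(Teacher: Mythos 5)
The paper does not prove this proposition at all: it is imported verbatim from Malyutin (Lemma 5.4 and Proposition 13.1 of \cite{malyutin2004writhe}) and used as a black box, so there is no in-paper argument to compare against. Judged on its own terms, your write-up has the right architecture (handle the $\sigma_i$-free case, then bound $\lfloor\beta^k\rfloor$ linearly in $k$ and divide), but the central step is not actually proved. The assertion that in the word for $\beta^k$ ``each of the $rk$ positive letters $\sigma_i$ can contribute at most one unit to the integer part, the $sk$ negative letters contribute at most zero, and the $\sigma_i$-free fillers contribute nothing'' is exactly the content of Malyutin's lemma restated as if it were self-evident; no property of the Dehornoy floor established in the paper or in your proposal delivers it. The only tool available from Proposition \ref{FDTCprops} is the defect-$1$ quasimorphism bound, which, as you yourself compute, gives only $\tau(\beta)\leq O(r+s)$. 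You candidly flag this as ``the heart of Malyutin's original argument,'' which is an honest admission that the proof is incomplete rather than a proof. What is needed (and what Malyutin supplies) is a sharper monotonicity statement for the Dehornoy floor under right multiplication, roughly that $\lfloor\alpha\rfloor\leq\lfloor\alpha\gamma\rfloor\leq\lfloor\alpha\rfloor+1$ when $\gamma$ is a positive generator and $\lfloor\alpha\gamma\rfloor$ is controlled with no loss when $\gamma$ is $\sigma_i$-free; without proving such a statement the bound $\lfloor\beta^k\rfloor\leq rk+C$ is unsupported.

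There is also a genuine error in your purely algebraic treatment of the $\sigma_i$-free case. From ``$\beta^k$ lies in the subgroup $B_n^{(i)}$ and no $\Delta^{2m}$ with $m\neq 0$ lies in $B_n^{(i)}$'' you conclude that $\lfloor\beta^k\rfloor$ is uniformly bounded. This is a non sequitur: $\lfloor\gamma\rfloor=m$ means $\Delta^{2m}\leq\gamma<\Delta^{2m+2}$ in the Dehornoy order, and a subgroup can easily contain elements with arbitrarily large floor while containing no power of $\Delta^2$ (the cyclic subgroup generated by $\Delta^2\sigma_1$ is a cheap example of the failure of this inference). The geometric version of your argument --- a $\sigma_i$-free braid preserves the isotopy class of an essential arc separating the first $i$ punctures from the rest, and a mapping class with a bounded (indeed fixed) orbit of an essential arc has translation number zero --- is sound and is the argument to keep, but it relies on the identification of $\tau$ with the translation number of the lifted boundary action, which the paper mentions only in passing and which you would need to invoke explicitly.
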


This follows immediately:

\begin{prop}\label{examples} Suppose an $n$-braid $\beta_{n,k}$ has a word of the form $\Delta^{2n}(\alpha)^{k}$ for $\alpha$ any $\sigma_{i}$-free word, $n$ and $k$ integers. Then $\tau(\beta_{n,k})$ is $n$.
\end{prop}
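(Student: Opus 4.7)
The plan is to combine Proposition \ref{FDTCestimate} with the last clause of Proposition \ref{FDTCprops} directly; no real work beyond that is needed.

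First I would handle the $\alpha^k$ factor. Since the word $\alpha$ contains neither $\sigma_i$ nor $\sigma_i^{-1}$, the same is true of $\alpha^k$ for any integer $k$ (for $k<0$ one simply observes that inverting a word only flips signs of generators, so $\sigma_i$-freeness is preserved). Thus $\alpha^k$ is $\sigma_i$-free, and Proposition \ref{FDTCestimate} gives $\tau(\alpha^k)=0$.

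Next I would peel off the full twists one at a time. Applying the identity $\tau(\Delta^{2}\gamma) = 1 + \tau(\gamma)$ from Proposition \ref{FDTCprops} with $\gamma = \Delta^{2j}\alpha^k$ for $j=0,1,\ldots,n-1$ and iterating (or doing a one-line induction on $n$, and the same argument works for negative $n$ using $\tau(\Delta^{-2}\gamma)=\tau(\gamma)-1$, which follows from the same identity with $\gamma$ replaced by $\Delta^{-2}\gamma$), one obtains
\[
\tau(\beta_{n,k}) \;=\; \tau(\Delta^{2n}\alpha^k) \;=\; n + \tau(\alpha^k) \;=\; n.
\]
There is no real obstacle; the only thing worth checking carefully is the trivial observation that $\sigma_i$-freeness of $\alpha$ is inherited by every power $\alpha^k$, so that Proposition \ref{FDTCestimate} applies.
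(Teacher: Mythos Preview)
Your proof is correct and is exactly the argument the paper has in mind: the paper simply writes ``This follows immediately'' from Propositions \ref{FDTCestimate} and \ref{FDTCprops}, and your write-up spells out precisely that immediate deduction.
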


\section{3-braids} \label{sec:3-braids}

We first determine $\psi, \psi'$ for closed 3-braids of the form $\widehat{\triangle^2\sigma_1\sigma_2^{-k}}$ for $k>0$. The reader may skip to Section \ref{subsec:pRVP} to see how these braids come up in the proof of Theorem \ref{3braidFDTC}.

\subsection{The transverse element for the braid $\widehat{\triangle^2\sigma_1\sigma_2^{-k}}$}
\begin{thm}\label{FDTCfamily} Working in the $3$-braid setting, for $k \in \mathbb{Z}$, $k \geq 0$,
 $$\psi(\Delta^{2}\sigma_{1}\sigma_{2}^{-k}) \neq 0$$
when computed over $\mathbb{Q}$, $\mathbb{Z}$, and $\mathbb{Z}/2\mathbb{Z}$ coefficients, and 
 $$\psi'(\Delta^{2}\sigma_{1}\sigma_{2}^{-k}) \neq 0.$$
\end{thm}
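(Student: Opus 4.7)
The plan is to proceed by induction on $k\ge 0$, using the skein long exact sequence \eqref{eq:ssesn} to propagate non-vanishing from $k-1$ to $k$.

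\textbf{Base case} ($k=0$): The braid $\Delta^{2}\sigma_{1}$ is a positive $3$-braid, so by Plamenevskaya's result for (quasi)positive braids \cite{Pla06} its closure has $\psi\ne 0$ and $\psi'\ne 0$ over $\mathbb{Q}$, $\mathbb{Z}$, and $\mathbb{Z}/2\mathbb{Z}$. Concretely, $\widetilde{\psi}=v_{-}^{\otimes 3}$ sits at the ``all-$0$'' corner of the cube of resolutions with no incoming differentials.

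\textbf{Inductive step}: Assume $\psi(\Delta^{2}\sigma_{1}\sigma_{2}^{-(k-1)})\ne 0$, and set $D=\widehat{\Delta^{2}\sigma_{1}\sigma_{2}^{-k}}$ and $D_{1}=\widehat{\Delta^{2}\sigma_{1}\sigma_{2}^{-(k-1)}}$. Applying \eqref{eq:ssesn} at one of the $\sigma_{2}^{-1}$ crossings of $D$ yields
\begin{equation*}
\cdots \to Kh^{-1-u}_{\ast}(D_{0}) \xrightarrow{\delta} Kh^{0}_{5-k}(D_{1}) \xrightarrow{\alpha} Kh^{0}_{4-k}(D) \to \cdots,
\end{equation*}
where $D_{0}$ denotes the horizontal (0-)smoothing of the chosen crossing and the gradings on the $D_{0}$ term depend on an orientation choice. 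The key structural observation is that at the chain level $\alpha$ identifies $\widetilde{\psi}(D_{1})$ with $\widetilde{\psi}(D)$: both are the all-$v_{-}$ tensor at the subcomplex of $CKh(D)$ where the chosen $\sigma_{2}^{-1}$ is $1$-resolved (vertical smoothing) and every other crossing takes its oriented resolution. Since $\Sl(D_{1})=5-k$ and $\Sl(D)=4-k$, these gradings match the shift in the LES exactly, and so $\alpha(\psi(D_{1}))=\pm\psi(D)$ in homology. By the inductive hypothesis $\psi(D_{1})\ne 0$, so it suffices to show $\psi(D_{1})\notin \ker(\alpha)=\mathrm{im}(\delta)$.

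To control $\mathrm{im}(\delta)$, one analyses $D_{0}$, which is obtained by inserting a Temperley--Lieb cup-cap at the chosen crossing; together with the braid closure, this produces a partial plat-closure on strands $2$ and $3$ and replaces the $3$-braid by a link of strictly smaller complexity, often of two-bridge-like form. One then either computes $Kh^{-1-u}_{\ast}(D_{0})$ directly, runs a secondary induction on $k$ applied to $D_{0}$, or invokes existing descriptions of Khovanov homology for $3$-braid closures (e.g.\ those used by Baldwin) to see that the relevant bidegree either vanishes or is mapped by $\delta$ to classes distinct from $\psi(D_{1})$. The reduced version $\psi'$ is handled by the identical argument using the reduced skein long exact sequence with the marked point placed on a strand unaffected by the resolved crossing, and the whole argument is coefficient-agnostic, running uniformly over $\mathbb{Q}$, $\mathbb{Z}$, and $\mathbb{Z}/2\mathbb{Z}$.

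\textbf{The main obstacle} is the precise computation of $\mathrm{im}(\delta)$. If $Kh^{-1-u}_{\ast}(D_{0})$ cannot be shown to vanish outright --- which requires careful bookkeeping of the writhe and orientation on $D_{0}$ after the horizontal smoothing --- then one must identify $\mathrm{im}(\delta)$ and verify it misses $\psi(D_{1})$ in $Kh^{0}_{5-k}(D_{1})$. Distinguishing $\psi(D_{1})$ from other classes in this bidegree may require a refinement of $\psi$, such as the annular refinement from \cite{hubbard2016annular}, or explicit chain-level book-keeping exploiting the fact that $\widetilde{\psi}(D_{1})$ is the all-$v_-$ element on the oriented resolution.
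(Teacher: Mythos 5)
Your proposal has a genuine gap in the inductive step, and it is a gap that cannot be papered over: the entire content of the theorem is concentrated in the step you defer. You correctly observe that at the chain level $\alpha(\widetilde{\psi}(D_1))=\widetilde{\psi}(D)$, but this only gives functoriality in the direction the paper already uses (resolving a negative crossing preserves \emph{non}-vanishing going from $D$ to $D_1$, i.e.\ $\psi(D)\neq 0 \Rightarrow \psi(D_1)\neq 0$). To run your induction you need the converse, which requires showing $\psi(D_1)\notin\mathrm{im}(\delta)$, and this is exactly what you leave as "either computes directly, runs a secondary induction, or invokes existing descriptions." That this cannot be finessed is shown by the closely analogous family $\Delta^{2}\sigma_{2}^{-k}$ in Table \ref{table:qprv}: there $\psi\neq 0$ for small $k$ but $\psi=0$ for $k>4$, so adding a single $\sigma_{2}^{-1}$ can and does kill $\psi$. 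Non-vanishing simply does not propagate along such an induction without a genuine computation at each stage. The vanishing criterion from Section \ref{sec:genstability} ($Kh^{-1-u}_{*}(D_0)=0$) only activates once the number of added twists exceeds $n_{+}$ of the base braid, which for $\Delta^{2}\sigma_{1}$ means $k$ roughly at least $8$; it gives you nothing for the small values of $k$ that your induction must pass through, and in any case stability would only tell you the answer is eventually constant, not what that constant is.

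The paper's proof is entirely different and sidesteps the skein exact sequence. For $k$ odd the closure is a knot in Baldwin's quasi-alternating family $\Delta^{2d}\sigma_1\sigma_2^{-a_1}\cdots$ with $d=1$, hence Khovanov $\sigma$-thin by Manolescu--Ozsv\'ath; for thin knots the Rasmussen invariant equals the signature, and one checks $\Sl = \sigma - 1 = s-1$, so the Baldwin--Plamenevskaya criterion \cite[Theorem 1.2]{baldwin2010khovanov} forces $\psi\neq 0$ and $\psi'\neq 0$ over $\mathbb{Q}$ and $\mathbb{Z}/2\mathbb{Z}$ (hence over $\mathbb{Z}$). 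The even-$k$ case then follows from the odd case by functoriality in the direction that actually holds. If you want to salvage your approach, you would need to replace the inductive step with an honest identification of $D_0$ (the partial plat closure you describe) and its Khovanov homology in the single relevant bidegree for every $k\geq 1$, which is substantially more work than the thinness argument.
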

Recall that $\psi'$ is the reduced version of the transverse invariant briefly defined in subsection \ref{ss.reduced}. For the definitions, details, and background of many of the notions used in this proof, see the references cited.

\begin{proof} 

Notice first that for $k$ odd, the $3$-braid $\Delta^{2}\sigma_{1}\sigma_{2}^{-k}$ closes to a knot rather than to a link. In \cite{baldwin2008heegaard}, Baldwin showed that the family of $3$-braids
$$\Delta^{2d}\sigma_{1}\sigma_{2}^{-a_{1}}\sigma_{1}\sigma_{2}^{-a_{2}}\cdots\sigma_{1}\sigma_{2}^{-a_{n}}$$
where the $a_{i} \geq 0$ and some $a_{j} \neq 0$ is quasi-alternating if and only if $d \in \{-1,0,1\}$. By work of Manolescu and Ozsv\'ath in \cite{manolescu2007khovanov}, quasi-alternating links are Khovanov homologically $\sigma$-thin. This means that the reduced Khovanov homology over $\mathbb{Z}$ takes a particularly simple form: supported on only one diagonal $j-2i$ grading where $j-2i = \sigma$ the signature of the link \footnote{There are two convention discrepancies between this definition and the cited paper; we are using here the conventions that seem to now be in most common use. In \cite{manolescu2007khovanov}, they consider the grading $j'-i$ instead of $j-2i$ where $j'=\frac{j}{2}$. Their theorem as stated in the paper is that the reduced Khovanov homology over $\mathbb{Z}$ is supported only in grading $j'-i = -\frac{\sigma}{2}$ for quasi-alternating links, or in our grading notation, $j-2i = -\sigma$. The sign discrepancy is explained by the fact that they take the opposite sign convention for the signature as we do: we take as our convention that positive knots have positive signature.}\footnote{We note that in this proof, the symbol $\sigma$ is used to denote only the signature of a link and should not be confused with Kauffman states or braid generators.}. 

As a consequence of the long exact sequence established by Asaeda and Przytycki in \cite{asaeda_przytycki}, Lowrance observed that \cite[Corollary 2.3]{lowrance_khovanov_width} reduced Khovanov homology over $\mathbb{Z}$ has support in grading $j-2i = \sigma$ if and only if Khovanov homology over $\mathbb{Z}$ has support in gradings $j-2i = \sigma+1$ and $j-2i = \sigma-1$. This implies that the Khovanov homology over $\mathbb{Q}$ also only has support in the same gradings. Recall also that Rasmussen's s-invariant \cite{rasmussen2010khovanov} is defined to be the maximum $j$-grading minus one (inherited from the Khovanov complex over $\mathbb{Q}$) of the element in the Lee complex that contributes to Lee homology \cite{lee_khovanov}. Lee homology for knots is particularly simple, and is only supported in $i$-grading $0$ \cite{lee_khovanov}. Hence for Khovanov $\sigma$-thin links, Rasmussen's s-invariant is defined to be the signature $\sigma$. 

Next, notice that for these knots, $\Sl = \sigma - 1$ \cite[Remark 7.6]{baldwin2010khovanov}.  Thus $\Sl = s-1$. Then work of Baldwin and Plamenevskaya \cite[Theorem 1.2]{baldwin2010khovanov} \footnote{This theorem as stated is for reduced Khovanov homology over $\mathbb{Z}/2\mathbb{Z}$ coefficients. However, notice that the proof also explicitly covers the case for Khovanov homology over $\mathbb{Q}$ coefficients and $\mathbb{Z}/2\mathbb{Z}$ coefficients.} implies both that $\psi' \neq 0$  and that $\psi \neq 0$ in Khovanov homology over $\mathbb{Z}/2\mathbb{Z}$ coefficients and $\mathbb{Q}$. This last fact implies that $\psi \neq 0$ in Khovanov homology over $\mathbb{Z}$ coefficients as well. 

Finally, suppose that $k$ is even. Then $\psi(\Delta^{2}\sigma_{1}\sigma_{2}^{-k-1}) \neq 0$ and $\psi'(\Delta^{2}\sigma_{1}\sigma_{2}^{-k-1}) \neq 0$ by what we just proved. By functoriality, $\psi(\Delta^{2}\sigma_{1}\sigma_{2}^{-k}) \neq 0$ and $\psi'(\Delta^{2}\sigma_{1}\sigma_{2}^{-k}) \neq 0$.
\end{proof}

\subsection{Proof of Theorem \ref{3braidFDTC}} \label{subsec:pRVP} We restate the theorem for convenience:
\begin{restate2} Suppose $K$ is a transverse knot that has a $3$-braid representative $\beta$ with fractional Dehn twist coefficient $\tau(\beta) > 1$. Then $\psi(K) \neq 0$ when computed over $\mathbb{Q}$, $\mathbb{Z}$, and $\mathbb{Z}/2\mathbb{Z}$ coefficients, and $\psi'(K) \neq 0$.
\end{restate2}

\begin{proof}
We will write this proof only for $\psi$; it is identical for $\psi'$. According to Murasugi's classification of 3-braids \cite{murasugi1974closed}, every $\sigma \in B_{3}$ comes in the following types up to conjugation:
\begin{enumerate}[a)]
\item $\Delta^{2d}\sigma_{1}\sigma_{2}^{-a_{1}}\sigma_{1}\sigma_{2}^{-a_{2}}\cdots\sigma_{1}\sigma_{2}^{-a_{n}}$ where $a_{i} \geq 0$ for all $i$ and some $a_{i} > 0$,
\item $\Delta^{2d}\sigma_{2}^{m}$ where $m \in \mathbb{Z}$, and
\item $\Delta^{2d}\sigma_{1}^{m}\sigma_{2}^{-1}$ where $m=-1, -2, -3$,
\end{enumerate}
where $d$ can take on any integer value.
Recall that $\psi$ is invariant under conjugation, so whichever of these conjugacy classes $\sigma$ belongs to determines $\psi(\sigma)$. The FDTC is also invariant under conjugation, and hence whichever of these conjugacy classes $\sigma$ belongs to determines its FDTC.

All braids in classes (a) and (b) have FDTC $d$, since $$\tau(\sigma_{2}^{-a_{1}}\sigma_{1}\sigma_{2}^{-a_{2}}\cdots\sigma_{1}\sigma_{2}^{-a_{n}}) = 0$$ and $\tau(\sigma_{2}^{m}) = 0$ by Proposition \ref{FDTCestimate}, and for any braid $\beta$, $\tau(\Delta^{2}\beta) = 1 + \tau(\beta)$. All braids in class (c) have FDTC less than or equal to $d$, since by Proposition \ref{FDTCestimate}, $\tau(\sigma_{1}^{m}\sigma_{2}^{-1}) \leq 0$ for negative values of $m$.

Hence for each of the classes, we need only consider $d > 1$. Since by Theorem \ref{FDTCfamily} we know that the model braid $\psi(\Delta^{2}\sigma_{1}\sigma_{2}^{-k}) \neq 0$ for all positive $k$ then every other braid in (a) and (b) with $d > 1$ has $\psi \neq 0$ by functoriality. Indeed: by making $k$ possibly quite large, we can achieve every other braid in (a) and (b) with $d > 1$ by inserting positive crossings. 

Finally, a straightforward manipulation of the braid words yields that the braids in (c) with $d > 1$ are all quasipositive. Hence for the braids in (c) with $d > 1$, $\psi \neq 0$ (\cite{Pla06}). 

\end{proof}

\section{General stability}\label{sec:genstability}
We prove Theorem \ref{thm:stab} in this section. Note in general that we have the following bounds on $Kh^i_j$. If $D$ is a diagram of a link $K$, then $Kh^i_j(D) = 0$ if $i$ or $j$ are outside of the following bounds:
\begin{align*}
-n_-(D) &\leq i \leq n_+(D) \\
n_+(D)-2n_-(D)-|s_0(D)| &\leq j \leq |s_1(D)| + 2n_+(D) - n_-(D). 
\end{align*}

\subsection{Negative sub-full twists }

Let $\beta$ be a braid of strand number $b$ and let $2\leq a  < b$. Let $k$ be a positive integer, and write $k = (a-1) \ell + r$, so $r = k \mod (a-1)$. We consider the closed braid $D^k$ obtained by adding to $\beta$ the following braid
\[ \alpha'_-= (\sigma_{i}^{-1}\sigma_{i+1}^{-1}\cdots\sigma_{i+a-2}^{-1})^{\ell}(\sigma_{i}^{-1}\sigma_{i+1}^{-1} \cdots \sigma_{i+r-1}^{-1})\] of strand number $a$, with $1\leq i \leq b-a+1$, and then taking the closure.  

\begin{figure}[ht]
\begin{center}
\def \svgwidth{.2\columnwidth}
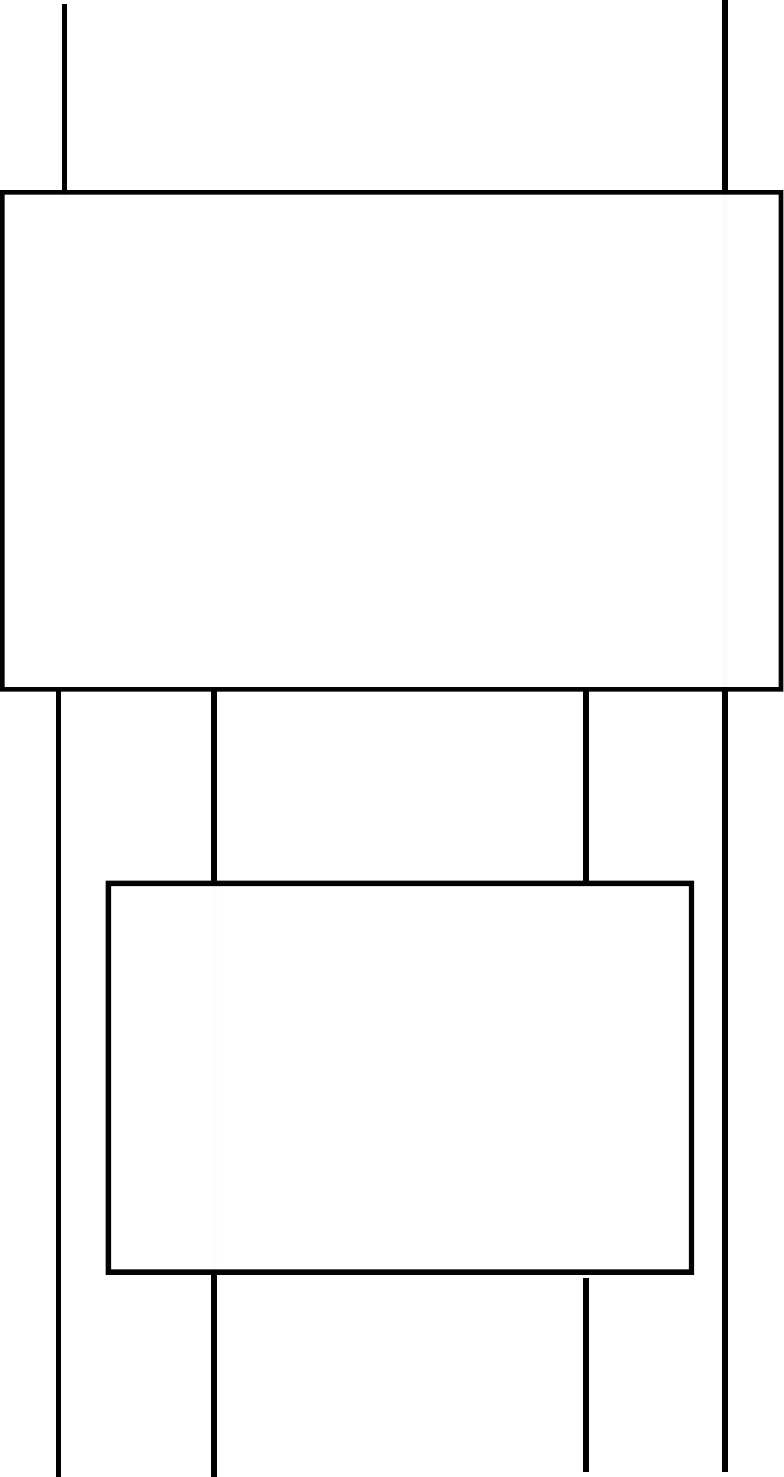
\caption{\label{fig:schema} The braid closing to $D^k$. }
\end{center}
\end{figure}

We denote by $D_0^k$ and $D_1^k$ the link diagrams obtained by taking the 0-resolution and the 1-resolution, respectively, at the crossing $\sigma_{i+r-1}^{-1}$. 

\begin{figure}[ht]
\def \svgwidth{.5\columnwidth}
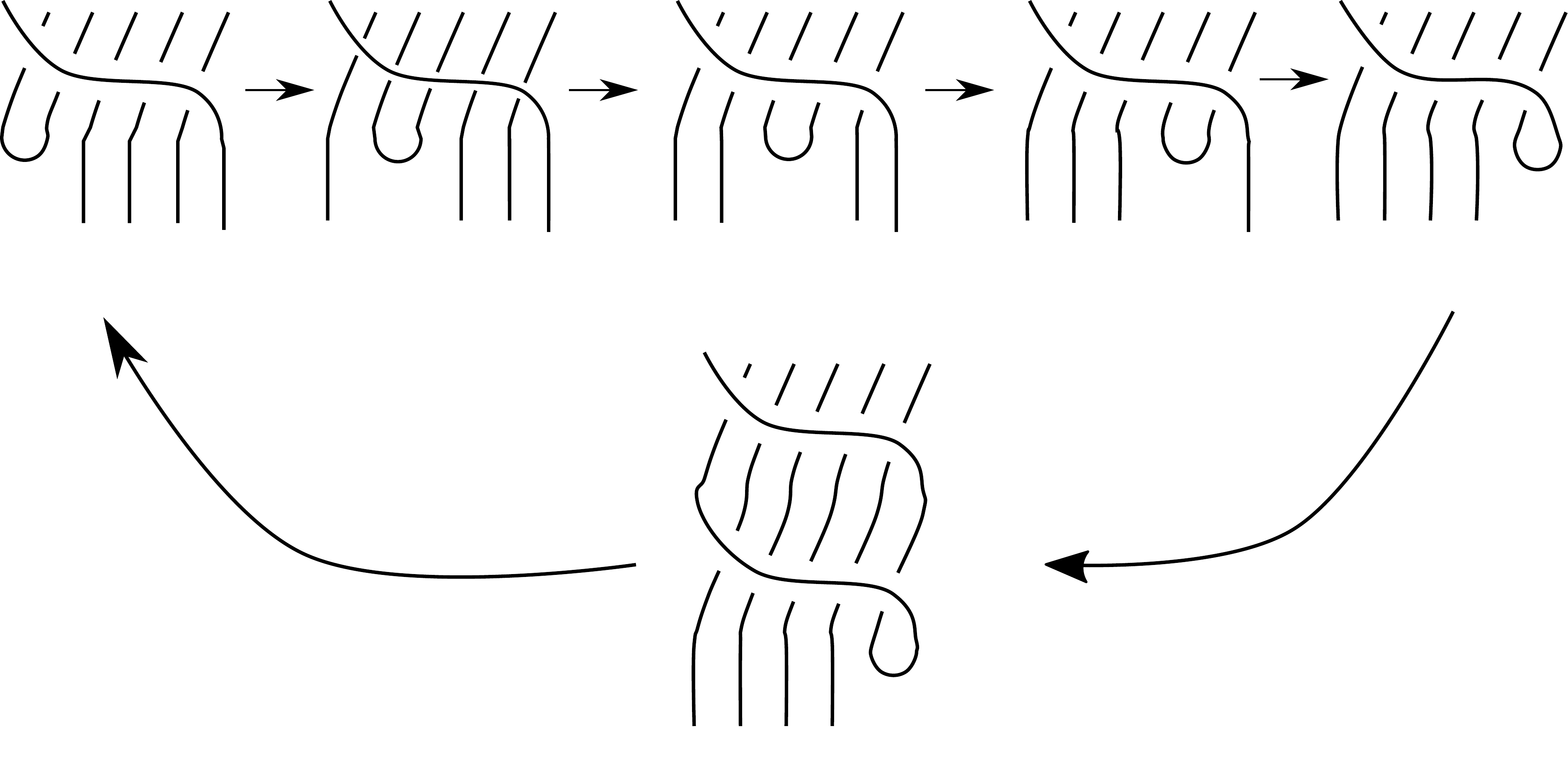
\caption{\label{fig:count} An example where $a = 6$ and $r = 1$ as we isotope the cap through copies of $\sigma_i^{-1} \sigma_{i+1}^{-1}\cdots \sigma_{i+a-2}^{-1}$. }
\end{figure} 
\begin{lem} \label{l.exceed}
Let $n_+'(D_0^k)$ be the number of positive crossings of $D_0^k$ in the subset $\alpha'_-$.
Then
\begin{equation} \label{eq:est} n_+'(D_0^k) -  n_+'(\widetilde{D_0^k}) \geq \ell,   \end{equation}
where $\widetilde{D_0^k}$ is the diagram isotopic to $D_0^k$, obtained by isotoping the cap through the rest of the braid $\alpha_-'$ resulting from choosing the $0$-resolution at the crossing $\sigma^{-1}_{i+r-1}$ in $D^k$ to get $D_0^k$. See Figure \ref{fig:count} for an example. 
\end{lem}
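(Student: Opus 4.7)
The plan is to track how the $0$-resolution and the subsequent cap isotopy affect the signs of the crossings of $\alpha'_-$.

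First, I would analyze $D_0^k$: the $0$-resolution of the negative crossing $\sigma^{-1}_{i+r-1}$ produces a cap and cup whose insertion forces an orientation reversal on a portion of the link connecting one endpoint of the cap to the matching endpoint of the cup. The crossings of $\alpha'_-$ traversed by this orientation-reversed segment have their signs flipped from negative to positive, and these flipped crossings are precisely the ones contributing to $n_+'(D_0^k)$.

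Next, I would count the flipped crossings contributed by each full sub-full twist copy. Since the word $\sigma_i^{-1}\sigma_{i+1}^{-1}\cdots\sigma_{i+a-2}^{-1}$ acts on the $a$ affected strands as a cyclic permutation, any strand traversing a full copy must shift its position by one (modulo $a$). This shift cannot be achieved without the strand passing through at least one crossing of the copy. Since the reversed segment originates at an endpoint of the cap (sitting just below the stack of $\ell$ full copies after it passes through the remaining crossings of the partial copy), at least one crossing in each of the $\ell$ full copies lies on the reversed segment and is flipped in $D_0^k$.

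Finally, I would examine the isotopy producing $\widetilde{D_0^k}$. As the cap is pushed upward past each full copy, a Reidemeister-$1$ simplification arises from the alignment of the cap's position with a crossing of that copy, eliminating at least one crossing per copy; these eliminated crossings are exactly the positive-sign ones identified above, so at least $\ell$ positive crossings of $\alpha'_-$ are removed as the cap traverses the full copies. Hence $n_+'(D_0^k) - n_+'(\widetilde{D_0^k}) \geq \ell$.

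The main obstacle is twofold: verifying that the orientation-reversed segment indeed passes through at least one crossing of each full copy (which requires a careful bookkeeping of how the cyclic permutation action on the strands interacts with the position of the cap's endpoints after traversing the partial copy), and verifying that the Reidemeister-$1$ simplifications genuinely occur for each copy in the course of the isotopy, independent of how $\beta$ may sit above $\alpha'_-$.
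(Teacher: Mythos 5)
The crux of your argument --- that the simplification from $D_0^k$ to $\widetilde{D_0^k}$ proceeds by Reidemeister-$1$ moves whose eliminated crossings are ``exactly the positive-sign ones'' --- misidentifies the mechanism, and this is precisely where the content of the lemma lies. As the cap is pushed through a full copy of $\sigma_i^{-1}\sigma_{i+1}^{-1}\cdots\sigma_{i+a-2}^{-1}$, it generically slides under a single transverse over-strand (the one strand of that copy which traverses all $a$ positions) that crosses \emph{both} of its legs; this is a Reidemeister-$2$-type simplification removing \emph{two} crossings, not a kink removal. The observation the paper's proof turns on (Figure \ref{f.orientation}) is that the two legs of the cap are anti-parallel, so for \emph{any} choice of orientation on $D_0^k$ those two crossings have \emph{opposite} signs; hence each passage removes exactly one positive and one negative crossing, $n_+'$ drops by one per copy, and summing over the $\ell$ copies gives \eqref{eq:est}. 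Your claim that only positive crossings are eliminated is false, and without the opposite-signs observation you have no grounds for asserting that each copy's simplification destroys at least one \emph{positive} crossing rather than, say, two negative ones.

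A second, structural problem: your first two steps aim at $n_+'(D_0^k)\geq \ell$, which is not what the lemma asserts; the lemma bounds the difference $n_+'(D_0^k)-n_+'(\widetilde{D_0^k})$, so you must exhibit $\ell$ positive crossings that are actually destroyed by the isotopy (and note that the isotopy creates none). Moreover, even the bound $n_+'(D_0^k)\geq\ell$ as you argue it is incomplete: a crossing met by the orientation-reversed arc becomes positive only if its \emph{other} strand is not also reversed, and whether that arc passes through each copy in the required way depends on the global combinatorics of the closure and on the orientation chosen for $D_0^k$ --- exactly the ``careful bookkeeping'' you defer at the end. The paper sidesteps all of this global tracking by arguing locally at the cap, where the sign cancellation within each removed pair is manifestly orientation-independent.
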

\begin{proof}
Denote the strands of the braid $D^k$ by $S_1, \ldots, S_b$. We follow the isotopy of the cap resulting from choosing the $0$-resolution at the crossing $\sigma_{i+r-1}^{-1}$ through the $\ell$ copies of $(\sigma_{i}^{-1}\sigma_{i+1}^{-1}\cdots \sigma_{i+a-2}^{-1})$ as shown above in Figure \ref{fig:count} in an example where $r=1$. Initially, the cap joins the strands $S_{i}$ and $S_{i+r}$. Regardless of the choice of orientation on $D_0^k$, we end up decreasing the number of positive crossings by one for each set of  $(\sigma_{i}^{-1}\sigma_{i+1}^{-1} \cdots \sigma_{i+a-2}^{-1})$ in $\alpha'$ through the isotopy. See also Figure \ref{f.orientation}.

\begin{figure}
\begin{center}
\def \svgwidth{.2\columnwidth}
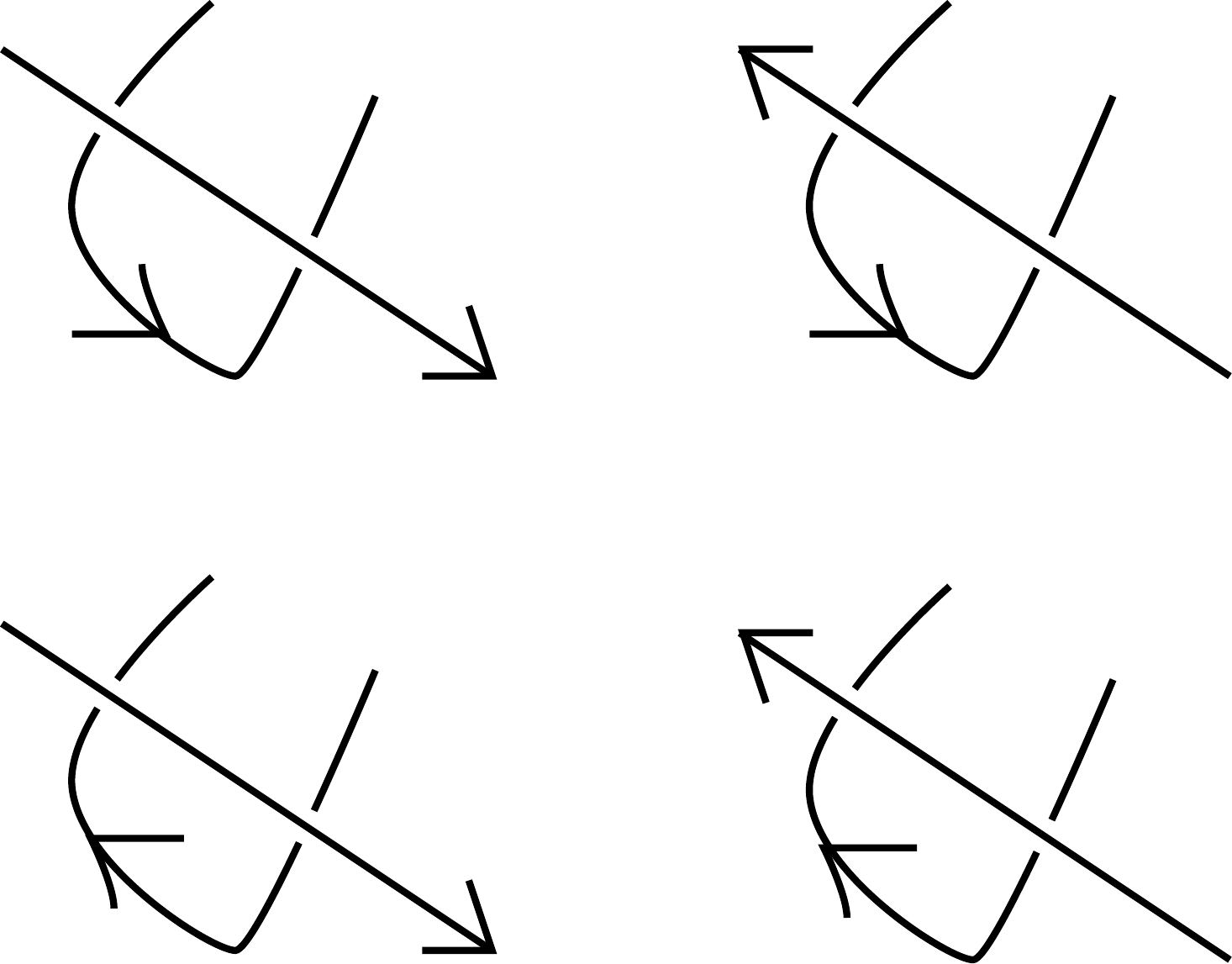
\caption{\label{f.orientation}We see the cap and the over-strand under which it passes through isotopy. Regardless of the choice of orientation, the number of positive crossings is decreased by 1 through each passing under a strand.}
\end{center}
\end{figure} 

\end{proof} 
We use the long exact sequence in Khovanov homology at a distinguished negative crossing \eqref{eq:ssesn}. Let $u = n_-(D_0^k) - n_-(D^k)$, the long exact sequence takes the form: 
\[\cdots \rightarrow  Kh^{i-1-u}_{j-3u-1}(D_0^k) \rightarrow Kh^{i}_{j+1}(D^k_1)\rightarrow Kh^i_j(D^k) \rightarrow Kh^{i-u}_{j-3u-1}(D_0^k) \rightarrow Kh^{i+1}_{j+1}(D_1^k) \rightarrow \cdots \] 
Then we show that with $k$ large enough, $Kh^{-1-u}_*(D_0^k)= 0$ and $Kh^{-u}_*(D_0^k)=0$ by showing 
$-u -1> n_+(\widetilde{D_0^k})$. This implies 
\[Kh^0_{j+1}(D_1^k) \stackrel{\alpha}{\cong} Kh^0_{j}(D^k),  \] with 
\[ \alpha(\widetilde{\psi}(D_1^k)) = \widetilde{\psi}(D^k) .\]

\begin{lem} \label{thm:ltrivial} Given $D^k = \widehat{\beta \alpha'_-}$, there exists a fixed number $N> 0 $ such that for all $k\geq N$, the homology groups
\[Kh^{-1-u}_*(D_0^k) \text{ and } Kh^{-u}_*(D_0^k)  \] are both trivial.
\end{lem}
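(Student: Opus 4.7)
The strategy, hinted at in the paragraph preceding the lemma, is to exploit the universal vanishing bound $Kh^i_*(D) = 0$ for $i > n_+(D)$, together with the isotopy invariance $Kh^i_*(D_0^k) \cong Kh^i_*(\widetilde{D_0^k})$. Since $-u-1 < -u$, it suffices to establish the single inequality
\[ -u-1 \;>\; n_+(\widetilde{D_0^k}) \qquad \text{for all } k \geq N, \]
which then forces both groups $Kh^{-1-u}_*(D_0^k)$ and $Kh^{-u}_*(D_0^k)$ to vanish.

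To estimate $-u-1 = n_-(D^k) - n_-(D_0^k) - 1$, I would track how many crossings switch sign between $D^k$ and $D_0^k$ under the orientation chosen on the resolved diagram. All $k$ crossings of $\alpha'_-$ are negative in $D^k$; write $\Delta_+^\alpha$ for the number of those (other than the resolved one) that appear as positive crossings in $D_0^k$, and write $\Delta_+^\beta$, $\Delta_-^\beta$ for the numbers of $\beta$-crossings that respectively flip from negative to positive and from positive to negative. Direct bookkeeping then gives
\[ n_-(D_0^k) \;=\; n_-(D^k) - 1 - \Delta_+^\alpha - \Delta_+^\beta + \Delta_-^\beta, \]
so $-u-1 = \Delta_+^\alpha + \Delta_+^\beta - \Delta_-^\beta$. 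Because the isotopy of Lemma~\ref{l.exceed} is supported entirely inside the $\alpha'_-$-region, the $\beta$-contribution to $n_+$ is unchanged from $D_0^k$ to $\widetilde{D_0^k}$, while Lemma~\ref{l.exceed} controls the $\alpha'_-$-contribution via $n_+'(\widetilde{D_0^k}) \leq n_+'(D_0^k) - \ell = \Delta_+^\alpha - \ell$. Summing,
\[ n_+(\widetilde{D_0^k}) \;\leq\; n_+(\beta) + \Delta_+^\beta - \Delta_-^\beta + \Delta_+^\alpha - \ell \;=\; n_+(\beta) + (-u-1) - \ell. \]

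Substituting into the goal, the inequality $-u-1 > n_+(\widetilde{D_0^k})$ reduces to the clean condition $\ell > n_+(\beta)$. Since $\ell = \lfloor k/(a-1) \rfloor$ grows linearly in $k$, the choice $N := (a-1)\bigl(n_+(\beta)+1\bigr)$ suffices.

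I expect the main obstacle to be ensuring that the orientation used on $D_0^k$ (needed to define $u$ in the skein exact sequence) is compatible with the figure-counting implicit in Lemma~\ref{l.exceed}: in particular, that the $\ell$ positive crossings lost during the isotopy really are present as positive crossings of $D_0^k$ under that orientation, so that the sign-flip bookkeeping aligns with the crossing-removal count. Once this consistency is in place, the linear growth of $\ell$ in $k$ dominates the constant $n_+(\beta)$ coming from the word $\beta$, and the lemma follows.
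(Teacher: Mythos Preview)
Your proposal is correct and follows essentially the same approach as the paper: both reduce the vanishing statement to the inequality $-u-1 > n_+(\widetilde{D_0^k})$, split the crossing counts into the $\alpha'_-$ and $\beta$ regions, use the identity $n'_-(D^k)-n'_-(D_0^k)=n'_+(D_0^k)+1$ together with Lemma~\ref{l.exceed}, and arrive at the condition $\ell > n_+^{\beta}(D^k) = n_+(\beta)$. Your concern about orientation compatibility is already handled in the paper's proof of Lemma~\ref{l.exceed}, which explicitly notes that the count of positive crossings removed is independent of the orientation chosen on $D_0^k$.
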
 
\begin{proof}
We need to show 
\begin{align*}
n_-(D^k) - n_-(D_0^k)-1 &> n_+(\widetilde{D_0^k}).
\intertext{Let $n'_-(D^k)$ be the number of negative crossings of $D^k$ in the subset $\alpha'_-$ and  $n^{\beta}_{\pm}(D^k) = n_{\pm}(D_k) - n'_{\pm}(D^k)$ be the number of positive/negative crossings of $D^k$ in the subset $\beta$. We rewrite the inequality as}
n'_-(D^k) - n'_-(D_0^k) + n^{\beta}_-(D^k) - n^{\beta}_-(D_0^k)-1 &> n'_+(\widetilde{D_0^k}) + n_+^{\beta}(\widetilde{D_0^k}). \\
\intertext{The following inequality obtained from rewriting $n_-'(D^k) - n_-'(D_0^k) = n'_+(D_0^k)+1$ and using Lemma \ref{l.exceed} implies the desired inequality above.}
n'_+(D_0^k) + n^{\beta}_-(D^k) - n^{\beta}_-(D_0^k) &> n'_+(D_0^k)- \ell + n_+^{\beta}(D^k) + n^{\beta}_-(D^k) - n^{\beta}_-(D_0^k).\\
\intertext{This simplifies to}
0 &> - \ell + n_+^{\beta}(D^k).
\end{align*} 
We can certainly make the last inequality true by making $k$ large enough so that $\ell > n_+(D^k)$, since $n_+^{\beta}(D^k)$ is constant. 
\end{proof} 

\subsection{Proof of Theorem \ref{thm:stab} for adding negative sub-full twists}
\begin{proof}
Let $L^m_{\pm} = \widehat{\beta (\alpha^{\pm})^m}$ as in the statement of the theorem. 
Choose large enough $m$ so that 
\[ \psi(L^{m}_-) = \psi(\beta(\alpha^-)^m\sigma_i^{-1}) =\psi(\beta(\alpha^-)^m\sigma_i^{-1} \sigma_{i+1}^{-1}) = \cdots = \psi(\beta(\alpha^-)^m\sigma_i^{-1}\sigma_{i+1}^{-1}\cdots \sigma^{-1}_{i+a-2}) = \psi(L^{m+1}_-)\] 
 by Lemma \ref{thm:ltrivial}. The conclusion of the theorem follows. 
\end{proof} 

\subsection{Positive sub-full twists}

\noindent This proof is analogous to the one for negative sub-full twists; the primary difference is that we use the long exact sequence at a distinguished positive crossing \eqref{eq:ssesp} with $u = n_-(D_1^k) - n_-(D^k)$, and we show that $-u < -n_{-}(\widetilde{D_{1}^{k}})$ instead of the bound on the homological degree on the other side as the isotopy that simplifies $D_{1}^{k}$ to $\widetilde{D_{1}^{k}}$ will reduce the number of negative crossings.  We use the same notation as before for indicating the positive/negative crossings in different regions of the braid.

We consider the closed braid $D^k$ obtained from adding to $\beta$ the following braid
\[ \alpha'_+ = (\sigma_{i}\sigma_{i+1}\cdots\sigma_{i+a-2})^{\ell}(\sigma_{i}\cdots \sigma_{i+r-1}), \] of strand number $2\leq a< b$, with $k = (a-1) \ell+r$, and then taking the closure. Let $D_1^k$ be the link diagram obtained by choosing the 1-resolution at the crossing $\sigma_{i+r-1}$. We obtain the analogous statement $n_-'(D_1^k) -  n_-'(\widetilde{D^k_1}) \geq \ell$
 to Lemma \ref{l.exceed} by replacing $n'_+(D^k_0)$ with $n'_-(D^k_1)$ and $n_+(\widetilde{D^k_0})$ with $n_-(\widetilde{D^k_1})$. The argument is similar except that the cap from choosing the $1$-resolution at $\sigma_{i+r-1}$  is now over the other braid strands. The inequality follows that
$$ n_-(D^k) - n_-(D_1^k)< -n_-(\widetilde{D_1^k}), $$ whenever $\ell > n_-(D^k)$. 

\subsection{Stability for the reduced version}

Note that we have the same bounds for reduced Khovanov homology with $\Z/2\Z$ coefficients on the homological grading: $\overline{Kh}^{i}_{j}(D) = 0$ if $i$ does not satisfy $-n_-(D)\leq i \leq n_+(D)$. Thus the same proof as above goes through to show stability for $\psi'$ under adding positive/negative sub-full twists using the long exact sequence for the reduced version.  

\section{Applications and examples}\label{sec:appsofgenstability}

In this section we apply the collection of tools we now have to determine the behavior of $\psi$ and $\psi'$ for a few families of closed braids, and draw conclusions about their quasipositivity and right-veeringness. For $\psi'$, we use Baldwin's program together with the stability behavior of $\psi'$ proved in Section \ref{sec:genstability}; for $\psi$, we will use by-hand computation with stability. For instance, one can determine the behavior of $\psi'$ and $\psi$ for the $3$-braid family from Theorem \ref{FDTCfamily} in this way, as it is possible to check that $\psi'$ does not vanish for $\Delta^{2}\sigma_{1}\sigma_{2}^{-8}$ using Baldwin's program, and $\psi\not=0$ by hand. In cases where the bound in Section \ref{sec:genstability} would require checking an example with too many crossings for Baldwin's program to handle, it is sometimes still possible to use the same general approach to get more precise information, as we do in subsection \ref{subsec:4braidex} for $\psi'$ for a family of 4-braids.

\subsection{A collection of examples}\label{subsec:appsofgenstability}
The first four columns of Table \ref{table:qprv} denote the number of strands of the braid, the word template for the braid family that we consider, the behavior of $\psi$ and $\psi'$ for these braids that we are able to determine\footnote{While we have no examples where the behaviors of $\psi$ and $\psi'$ differ, we know of no mathematical reason why their behaviors should \emph{always} match.}, and the methods used to obtain these results:  ``Prog." stands for Baldwin's program for $\psi'$ and ``Comp." stands for a by-hand computation for $\psi$. Wherever we claim that $\psi$ dies due to a by-hand computation, we provide the element that kills it in subsection \ref{subsec:killpsi}. The fifth column gives the writhe of the braid, and the sixth and seventh columns determine whether the braid is quasipositive and/or right-veering, if possible, along with the method used. We have:

\begin{itemize}
\item Braid families that are right-veering but not quasipositive (the first six).
\item Braid families that are not quasipositive and have positive writhes (the last three).
\end{itemize}

\begin{table}[H]
\begin{center}

\resizebox{1\textwidth}{!}{%
\begin{tabular}{||c | c | c | p{22mm} | c | p{22mm} | p{19mm}||} 
 \hline
 $n$ & Braid in $B_{n}$ & $\psi, \psi'$ & Method & Writhe & Quasipositive & Right-veering \\ 
 \hline\hline
3 & $\Delta^{2}\sigma_{2}^{-k}, k > 4$ & $\psi, \psi'=0$ & Prog,./Comp., functoriality & $6-k$ & No, $k > 6$, writhe & Yes, FDTC  \\ 
 \hline
 3 & $\Delta^{2}\sigma_{1}\sigma_{2}^{-k}, k \in \mathbb{N}$ & $\psi, \psi' \neq 0$ & See Thm \ref{FDTCfamily}  & $7-k$ & No, $k > 7$, writhe & Yes, $\psi/\psi'$ or FDTC \\
 \hline
 4 & $\Delta^{2}\sigma_{2}^{-k}, k \in \mathbb{N}$ & $\psi, \psi' \neq 0$ & Example above, functoriality & $12-k$ & No, $k > 12$, writhe & Yes, $\psi/\psi'$ or FDTC \\
 \hline
 4 & $\Delta^{2}\sigma_{3}^{-k}, k \in \mathbb{N}$ & $\psi' \neq 0$ & See subsection \ref{subsec:4braidex} & $12-k$ & No, $k > 12$, writhe & Yes, $\psi'$ or FDTC \\
 \hline
 4 & $\sigma_{1}\sigma_{2}\sigma_{3}\sigma_{3}\sigma_{2}\sigma_{1}\sigma_{3}^{-k}, k > 2$ & $\psi, \psi' =0$ & Prog./Comp., functoriality & $6-k$ & No, $k > 6$, writhe & Yes, FDTC \\
 \hline
 4 & $\Delta^{2}(\sigma_{2}\sigma_{3})^{-k}, k > 5$ & $\psi, \psi' =0$ & Example above\footnotemark, functoriality & $12-2k$ & No, $k > 6$, writhe & Yes, FDTC \\
 \hline
 4 & $(\sigma_{1})^2\sigma_{2}^{-1}\sigma_{3}\sigma_{2}^{-1}\sigma_{1}^{-1}\sigma_{2}(\sigma_{3})^2(\sigma_{2}\sigma_{3})^{k}, k \in \mathbb{N}$ & $\psi, \psi' =0$ & Prog./Comp., stability & $3+2k$ & No, all $k$, $\psi/\psi'$ & ? \\
 \hline
 5 & $\sigma_{1}\sigma_{2}^{-1}\sigma_{3}\sigma_{4}^{-1}\sigma_{2}^{-1}\sigma_{1}^{-1}(\sigma_{2})^{2}\sigma_{3}(\sigma_{4})^2(\sigma_{2}\sigma_{3})^{k}, k \in \mathbb{N}$ & $\psi, \psi' =0$ & Prog./Comp., stability & $3+2k$ & No, all $k$, $\psi/\psi'$ & ? \\
\hline
6 &  $\sigma_{4}\sigma_{1}\sigma_{2}\sigma_{4}\sigma_{5}^{-1}\sigma_{4}^{-1}\sigma_{3}\sigma_{5}\sigma_{1}^{-1}\sigma_{2}(\sigma_{2}\sigma_{3})^{k}, k \in \mathbb{N}$ & $\psi, \psi' =0$ & Prog./Comp., stability & $4+2k$ & No, all $k$, $\psi/\psi'$ & ? \\
\hline
\end{tabular}
}
\end{center}
\caption{Calculations of $\psi$ and $\psi'$ for various braids on $n$ strands, together with their calculation method as well as other properties of these braids.}\label{table:qprv}
\end{table}
\vspace*{3mm}

\footnotetext{{Using the fact that $\Delta^{2}(\sigma_{2}\sigma_{3})^{-3} = \sigma_{1}\sigma_{2}\sigma_{3}\sigma_{3}\sigma_{2}\sigma_{1}$}.}

Recall that it is of interest to detect braids that are right-veering but not quasipositive - see subsection \ref{subsec:defnsofRVQPFDTC}. Baldwin and Grigsby proved in \cite{baldwin2015categorified} that if a braid is not right-veering, then it has vanishing $\psi$. Their proof would apply just as well to $\psi'$. Hence if a braid has non-vanishing $\psi$ or $\psi'$, it is guaranteed to be right-veering. Notice also that if one has a braid with negative writhe, then it cannot be quasipositive. Thus $\psi$ or $\psi'$ together with the writhe can be used to detect braids that are right-veering but not quasipositive, as is done in the second through fourth examples in the table. However, it is also possible for $\psi$ and $\psi'$ to vanish for braids that are right-veering but not quasipositive, as can be seen in the first, fifth, and sixth examples in the table. In these cases we were able to determine that the FDTCs for these braid families were greater than or equal to one, which implies that these braids are indeed right-veering \cite{honda2008right}.

In the case where a braid has positive writhe, $\psi$ or $\psi'$ can be of use to detect non-quasipositivity. Indeed, Plamenevskaya proved in \cite{Pla06} that if a braid is quasipositive, then it has non-vanishing $\psi$, and her proof applies equally well to $\psi'$.  The last three examples in the table have arbitrarily large writhes but also have vanishing $\psi$ and $\psi'$, and hence are not quasipositive. We chose these examples as it is not obvious by simply manipulating the braid words that they are not quasipositive; there should be many more such examples.

\subsection{Justification for $\psi=0$ in Table \ref{table:qprv}}\label{subsec:killpsi}
For each braid $\beta$ in Table \ref{table:qprv} where we state that ``comp." is the method for showing $\psi(\beta) = 0$, we justify the claim by giving an explicit element that can be directly verified to kill $\psi$, that is, we give an element $\Phi \in CKh^{-1}_{\Sl(\hat{\beta})} = CKh^{-1}_{\Sl(\hat{\beta})}(\hat{\beta})$ such that $d(\Phi) = \psi$. These braids are (in order of their appearance from top to bottom in Table \ref{table:qprv}):  
\begin{enumerate}
\item $\triangle^2 \sigma_2^{-k}$, $k>4$
\item $\sigma_1\sigma_2\sigma_3\sigma_3\sigma_2\sigma_1\sigma_3^{-k}$, $k>2$
\item  $(\sigma_1)^2\sigma_2^{-1}\sigma_3\sigma_2^{-1}\sigma_1^{-1}\sigma_2(\sigma_3)^2(\sigma_2\sigma_3)^k$, $k\in \mathbb{N}$
\item $\sigma_1\sigma_2^{-1}\sigma_3\sigma_4^{-1}\sigma_2^{-1}\sigma_1^{-1} (\sigma_2)^2\sigma_3(\sigma_4)^2(\sigma_2\sigma_3)^k, k\in \mathbb{N}$ 
\item $\sigma_4\sigma_1\sigma_2\sigma_4\sigma_5^{-1}\sigma_4^{-1}\sigma_3\sigma_5\sigma_1^{-1}\sigma_2(\sigma_2\sigma_3)^k, k \in \mathbb{N}$
\end{enumerate} 
To represent an element in $CKh^{-1}_{\Sl(\hat{\beta})}$ that kills $\psi$, we represent a generator in $CKh$ by first giving its Kauffman state by decorating the braid word $\beta$ with dots. A dot on top of an Artin generator indicates that the $1$-resolution is chosen at the corresponding crossing. Without the dot, the $0$-resolution is chosen. Then, we number the state circles of the Kauffman state and indicate between parentheses which circle is marked with a +, corresponding to $v_+\in V$, in the grading $i=-1$, $j = \Sl(\hat{\beta})$.  Each circle in the state not indicated between parentheses is marked with a $-$, corresponding to $v_-\in V$. If there are two numbers in the parentheses then the corresponding circles are both marked with a $+$. In the figures, a segment between state circles indicates where the crossing is before the Kauffman state is applied. Thickened (or blue) segments indicate that the $1$-resolution is chosen, and thin (or red) segments indicate that the $0$-resolution is chosen.  See the following figure for an example of how to interpret a dotted braid word and how to read off the generator from a Kauffman state on the closed braid where circles are labeled. 

\begin{figure}[ht]
\begin{center} 
\def \svgwidth{.7\columnwidth}
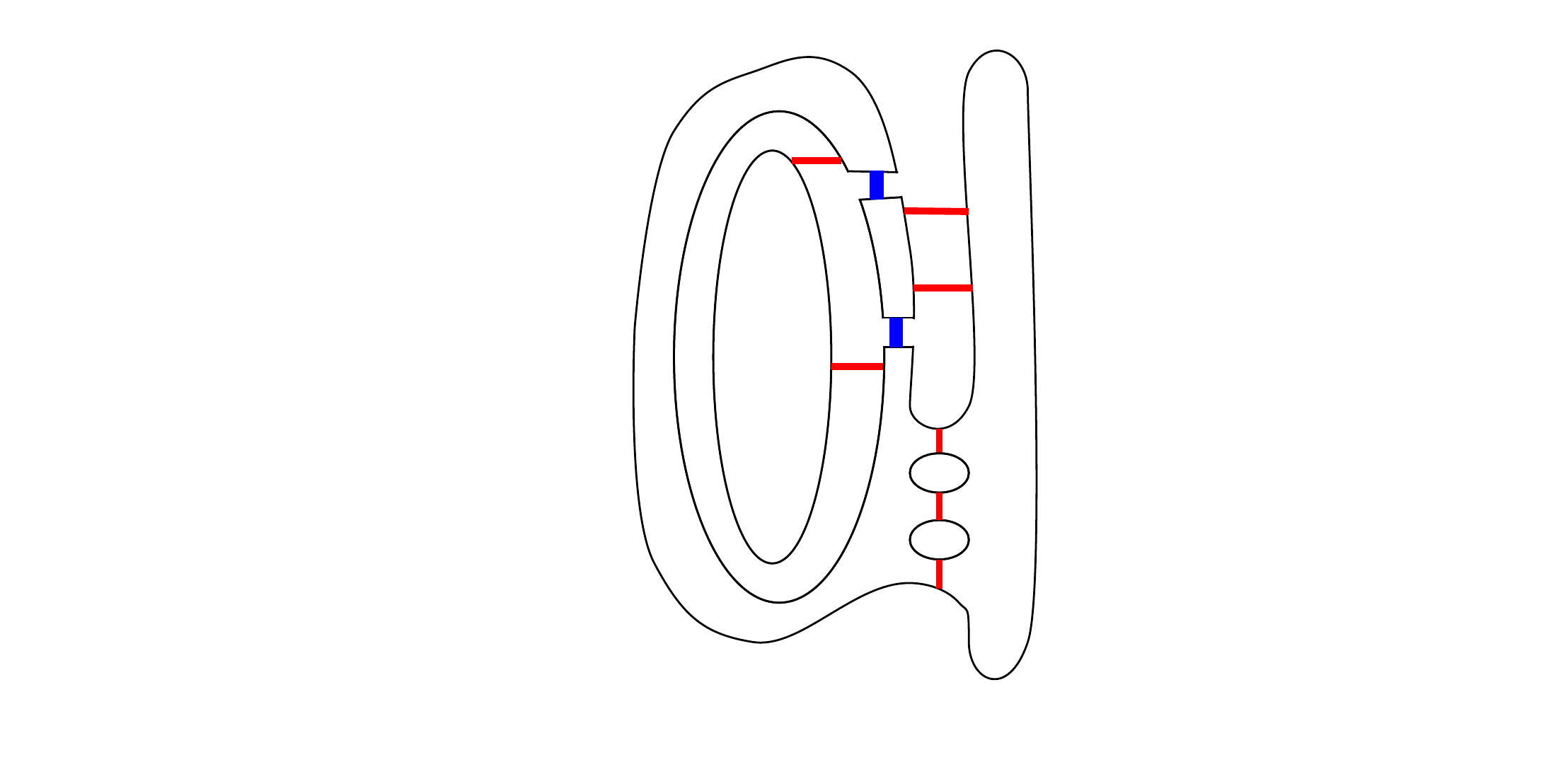
\end{center} 
\caption{On the left: the closure of a 4-braid. On the right: a generator in a Kauffman state on the closed 4-braid. The tensor product $V^{\otimes 5}$ is formed from a Kauffman state on the closure of the braid $\sigma_1\sigma_2\sigma_3\sigma_3\sigma_2\sigma_1\sigma_3^{-3}$, which chooses the 1-resolution at the two crossings corresponding to $\sigma_2$, and the 0-resolution everywhere else. The factors of the tensor product are ordered according to the label on each of the resulting state circles. We see that the circle labeled with 4 is marked with a $+$ and all other circles are labeled with a $-$.}
\end{figure} 
\begin{enumerate} 
\item $\triangle^2 \sigma_2^{-k}$, $k>4$. 
By the braid relations, we get $\triangle^2\sigma_2^{-5} = \sigma_1\sigma_2\sigma_2\sigma_1\sigma_2^{-3}$. If we show $\psi = 0$ for the closure of $\sigma_1\sigma_2\sigma_2\sigma_1\sigma_2^{-3}$, then functoriality would show that $\psi = 0$ for $\widehat{\triangle^2\sigma_2^{-k}}$ for all $k>5$.  We claim 
\begin{align*} \label{3psi1}
&\psi(\sigma_1\sigma_2\sigma_2\sigma_1\sigma_2^{-3})\\
%&= d(-\phi_{12(4)}+ \phi_{23(4)}+\phi_{24(4)}-\phi_{36(1)}+\phi_{36(4)} -\phi_{23(3)}-\phi_{46(1)}+\phi_{46(4)} -\phi_{24(3)}-\phi_{34(13)}+\phi_{34(11)}+\phi_{67}) \\
&= d(-\dot{\sigma_1}\sigma_2\sigma_2\dot{\sigma_1}\sigma_2^{-3}(4) + \sigma_1\dot{\sigma_2}\sigma_2\dot{\sigma_1}\sigma_2^{-3}(4) - \sigma_1\dot{\sigma_2}\sigma_2\sigma_1\sigma_2^{-1}\dot{\sigma_2}^{-1}\sigma_2^{-1}(1) + \sigma_1\sigma_2\dot{\sigma_2}\dot{\sigma_1}\sigma_2^{-3}(4) \\ \notag
&+ \sigma_1\dot{\sigma_2}\sigma_2\sigma_1\sigma_2^{-1}\dot{\sigma_2}^{-1}\sigma_2^{-1}(4)-\sigma_1\dot{\sigma_2}\sigma_2\dot{\sigma_1}\sigma_2^{-3}(3)-\sigma_1\sigma_2\dot{\sigma}_2\sigma_1\sigma_2^{-1}\dot{\sigma}_2^{-1}\sigma_2^{-1}(1) + \sigma_1\sigma_2\dot{\sigma}_2\sigma_1\sigma_2^{-1}\dot{\sigma}_2^{-1}\sigma_2^{-1}(4)\\ \notag
&-\sigma_1\sigma_2\dot{\sigma_2}\dot{\sigma_1}\sigma_2^{-3}(3)-\sigma_1\dot{\sigma_2}\dot{\sigma_2}\sigma_1\sigma_2^{-3}(45)+ \sigma_1\dot{\sigma_2}\dot{\sigma_2}\sigma_1\sigma_2^{-3}(35)+ \sigma_1\sigma_2\sigma_2\sigma_1\sigma_2^{-1}\dot{\sigma_2}^{-1}\dot{\sigma_2}^{-1}), 
\end{align*}
where $\psi$ is the image of $d$ on the element indicated on the right hand side of the equation. See Figure \ref{f.3-braid} for the labeling of circles of the states in $CKh^{-1}_{\Sl(\widehat{\sigma_1\sigma_2\sigma_2\sigma_1\sigma_2^{-3})})}$. 
\begin{figure}[ht]
\centering
\scriptsize{
\def \svgwidth{.9\columnwidth}
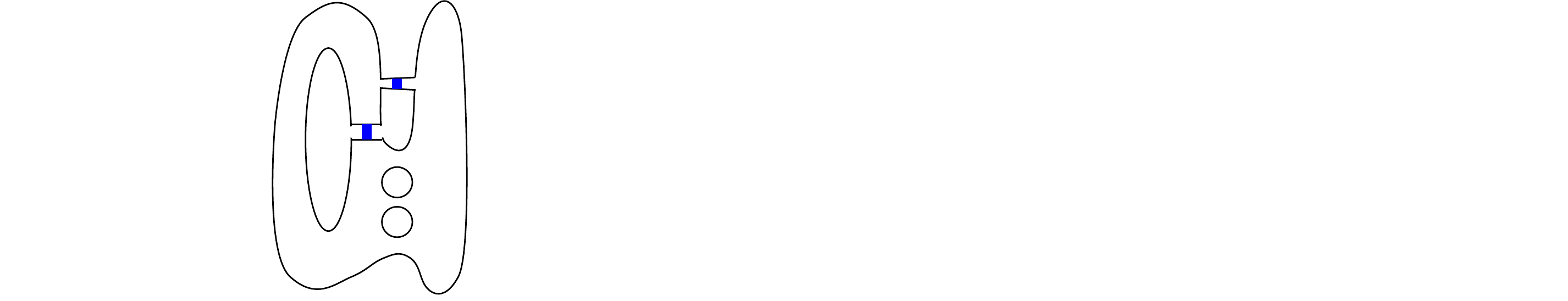}
\caption{\label{f.3-braid} Kauffman states with labeled circles giving rise to generators in the sum killing $\psi$ for the closure of the 3-braid $\sigma_1\sigma_2\sigma_2\sigma_1\sigma_2^{-3}$.   }
\end{figure} 
\item $\sigma_1\sigma_2\sigma_3\sigma_3\sigma_2\sigma_1\sigma_3^{-k}$, $k>2$. Also by functoriality, it suffices to show $\psi$ vanishes for the closure of  $\sigma_1\sigma_2\sigma_3\sigma_3\sigma_2\sigma_1\sigma_3^{-3}$. We claim

\begin{align*}
&\psi(\sigma_1\sigma_2\sigma_3\sigma_3\sigma_2\sigma_1\sigma_3^{-3})\\ 
&= d(\sigma_1\sigma_2\sigma_3\sigma_3\sigma_2\sigma_1 \sigma_3^{-1}\dot{\sigma}_3^{-1}\dot{\sigma}_3^{-1}-\sigma_1\dot{\sigma}_2\sigma_3\sigma_3\dot{\sigma}_2\sigma_1 \sigma_3^{-3}(4)+\sigma_1\sigma_2\dot{\sigma}_3\sigma_3\dot{\sigma}_2\sigma_1 \sigma_3^{-3}(4) \\ 
&-\sigma_1\sigma_2\dot{\sigma}_3\sigma_3\dot{\sigma}_2\sigma_1 \sigma_3^{-3}(5)+\sigma_1\sigma_2\sigma_3\dot{\sigma}_3\dot{\sigma}_2\sigma_1 \sigma_3^{-3}(4)-\sigma_1\sigma_2\sigma_3\dot{\sigma}_3\dot{\sigma}_2\sigma_1 \sigma_3^{-3}(5)-\sigma_1\sigma_2\dot{\sigma}_3\sigma_3\sigma_2\sigma_1 \dot{\sigma}_3^{-1}\sigma_3^{-2}(3)\\
&-\sigma_1\sigma_2\dot{\sigma}_3\sigma_3\sigma_2\sigma_1 \sigma_3^{-1}\dot{\sigma}_3^{-1}\sigma_3^{-1}(2)-\sigma_1\sigma_2\sigma_3\dot{\sigma}_3\sigma_2\sigma_1 \sigma_3^{-1}\dot{\sigma}_3^{-1}\sigma_3^{-1}(2)-\sigma_1\sigma_2\sigma_3\dot{\sigma}_3\sigma_2\sigma_1\dot{\sigma}_3^{-1}\sigma_3^{-2}(3)\\
&-\sigma_1\sigma_2\dot{\sigma}_3\dot{\sigma}_3\sigma_2\sigma_1 \sigma_3^{-3}(45)).
\end{align*}
See Figure \ref{f.4braid1} for the labeling of circles of the states in $CKh^{-1}_{\Sl(\widehat{\sigma_1\sigma_2\sigma_3\sigma_3\sigma_2\sigma_1\sigma_3^{-3}})}$. 

\begin{figure}[ht]
\centering
\scriptsize{
\def \svgwidth{\columnwidth}
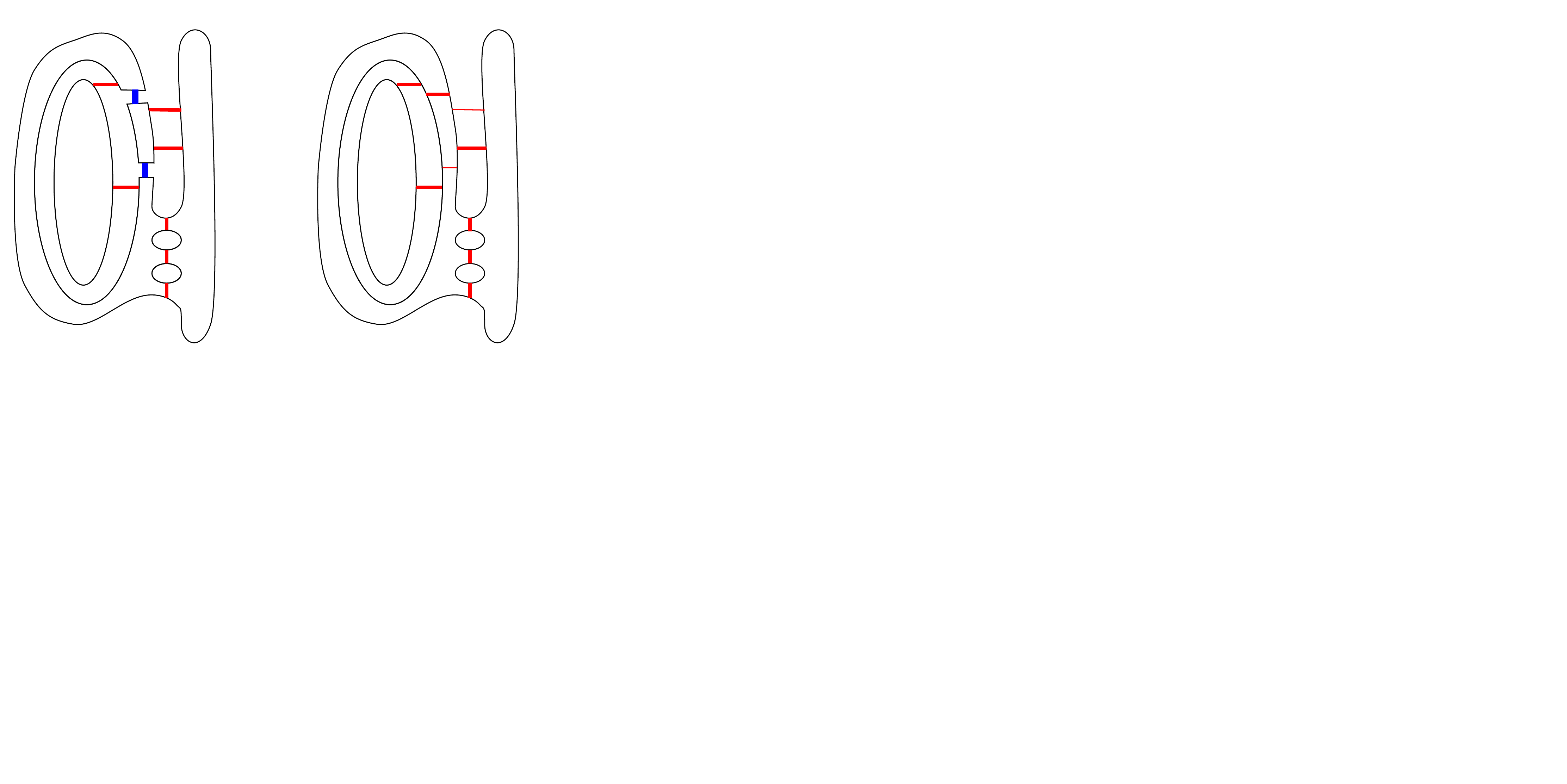}
\caption{\label{f.4braid1} States and numbered circles for the closure of the 4-braid $\sigma_1\sigma_2\sigma_3\sigma_3\sigma_2\sigma_1\sigma_3^{-3}$.}
\end{figure}

\item $(\sigma_1)^2\sigma_2^{-1}\sigma_3\sigma_2^{-1}\sigma_1^{-1}\sigma_2(\sigma_3)^2(\sigma_2\sigma_3)^k$, $k\in \mathbb{N}$. We claim for all $k\in \mathbb{N}$, 

\begin{align*}
&\psi((\sigma_1)^2\sigma_2^{-1}\sigma_3\sigma_2^{-1}\sigma_1^{-1}\sigma_2(\sigma_3)^2(\sigma_2\sigma_3)^k) \\ 
 &= d((\sigma_1)^2\dot{\sigma}_2^{-1}\sigma_3\dot{\sigma}_2^{-1}\sigma_1^{-1}\sigma_2(\sigma_3)^2(\sigma_2\sigma_3)^k-\sigma_1\dot{\sigma}_1\dot{\sigma}_2^{-1}\sigma_3\sigma_2^{-1}\sigma_1^{-1}\sigma_2(\sigma_3)^2(\sigma_2\sigma_3)^k \\ 
 &-\dot{\sigma}_1\sigma_1\dot{\sigma}_2^{-1}\sigma_3\sigma_2^{-1}\sigma_1^{-1}\sigma_2(\sigma_3)^2(\sigma_2\sigma_3)^k).
\end{align*}
All the circles of all these states are marked with a $-$. Note that the same elements would map to $\psi$ regardless of $k$.

\item $\sigma_1\sigma_2^{-1}\sigma_3\sigma_4^{-1}\sigma_2^{-1}\sigma_1^{-1} (\sigma_2)^2\sigma_3(\sigma_4)^2(\sigma_2\sigma_3)^k, k\in \mathbb{N}$. We claim for all $k\in \mathbb{N}$, 
\begin{align*} 
&\psi(\sigma_1\sigma_2^{-1}\sigma_3\sigma_4^{-1}\sigma_2^{-1}\sigma_1^{-1} (\sigma_2)^2\sigma_3(\sigma_4)^2(\sigma_2\sigma_3)^k)\\
&= d(\sigma_1\dot{\sigma}_2^{-1}\sigma_3\dot{\sigma_4}^{-1}\dot{\sigma}_2^{-1}\sigma_1^{-1}\sigma_2^2\sigma_3\sigma_4^2(\sigma_2\sigma_3)^k-\dot{\sigma}_1\dot{\sigma}_2^{-1}\sigma_3\dot{\sigma}_4^{-1}\sigma_2^{-1}\sigma_1^{-1}\sigma_2^2\sigma_3\sigma_4^2(\sigma_2\sigma_3)^k). 
\end{align*}
All the circles of all these states are marked with a $-$.
\item $\sigma_4\sigma_1\sigma_2\sigma_4\sigma_5^{-1}\sigma_4^{-1}\sigma_3\sigma_5\sigma_1^{-1}\sigma_2(\sigma_2\sigma_3)^k, k \in \mathbb{N}$. We claim for all $k\in \mathbb{N}$, 
\begin{align*} 
&\psi(\sigma_4\sigma_1\sigma_2\sigma_4\sigma_5^{-1}\sigma_4^{-1}\sigma_3\sigma_5\sigma_1^{-1}\sigma_2(\sigma_2\sigma_3)^k) \\ 
&= d(\sigma_4\sigma_1\sigma_2\sigma_4\sigma_5^{-1}\dot{\sigma_4}^{-1}\sigma_3\sigma_5\dot{\sigma}_1^{-1}\sigma_2(\sigma_2\sigma_3)^k+\sigma_4\sigma_1\sigma_2\sigma_4\sigma_5^{-1}\sigma_4^{-1}\sigma_3\dot{\sigma}_5\dot{\sigma}_1^{-1}\sigma_2(\sigma_2\sigma_3)^k).
\end{align*} 
All the circles of all these states are marked with a $-$.
\end{enumerate} 
\subsection{A four-braid example} \label{subsec:4braidex}

Proposition \ref{4braidexample} follows from applying functoriality and stability to $\alpha_k = \triangle^2 \sigma_2^{-k}$ and $\eta_k = \triangle^2(\sigma_2\sigma_3)^{-k}$ in Table \ref{table:qprv} and the following theorem. 

\begin{thm}\label{4braid} The $4$-braid family 
$$\beta_{k} = \Delta^{2}\sigma_{3}^{-k}$$
where $k \in \mathbb{N}$ satisfies
$$\psi'(\beta_{k}) \neq 0.$$
\end{thm}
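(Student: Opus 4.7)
The plan is to prove non-vanishing of $\psi'(\beta_k)$ for all $k \in \mathbb{N}$ by combining Theorem~\ref{thm:stab}, an explicit base case computation with Baldwin's program, and Plamenevskaya's functoriality, in the same spirit as the $3$-braid family $\Delta^2 \sigma_1 \sigma_2^{-k}$ treated in Section~\ref{sec:3-braids}. Apply Theorem~\ref{thm:stab} with $\beta = \Delta^2 \in B_4$, $i = 3$, $a = 2$, so that $\alpha^- = \sigma_3^{-2}$. This produces a threshold $N$ such that for all $k \geq N$, $\psi'(\beta_k) \neq 0$ if and only if $\psi'(\beta_{k+1}) \neq 0$; indeed the proof of Theorem~\ref{thm:stab} in fact propagates non-vanishing one crossing at a time. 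Hence a single verification $\psi'(\beta_{k_0}) \neq 0$ for any $k_0 \geq N$ yields non-vanishing at all $k \geq k_0$, and Plamenevskaya's functoriality (resolving a single $\sigma_3^{-1}$) then extends the conclusion downward to every $k < k_0$, since this operation takes $\beta_{k+1}$ to $\beta_k$ and preserves $\psi' \neq 0$.

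The difficulty is that the threshold $N$ predicted by Lemma~\ref{thm:ltrivial} is roughly $N > n_+(\Delta^2) = 12$, so directly running Baldwin's program on the base case $\beta_N$ would require computing $\psi'$ on a $4$-braid with at least $25$ crossings, beyond its practical range. To sharpen the bound I would retrace the proof of Lemma~\ref{thm:ltrivial} for this specific family, analyzing the $0$-resolution $D_0^k$ at the final $\sigma_3^{-1}$ together with its simplified form $\widetilde{D_0^k}$. The cap created by the $0$-resolution can be isotoped not only through the remaining $\sigma_3^{-1}$ factors, as in the general argument, but also pushed into the $\Delta^2$ block, where additional Reidemeister reductions against the twelve positive crossings of $\Delta^2$ should further lower $n_+(\widetilde{D_0^k})$. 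A careful accounting of these reductions would yield a considerably smaller effective threshold $k_0$, hopefully within reach of Baldwin's program.

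The main obstacle is carrying out this refinement precisely enough to push $k_0$ down to a value for which $\psi'(\beta_{k_0})$ can be computed directly. Once such a $k_0$ and a nonzero computation are in hand, the remaining stability and functoriality steps are immediate and the theorem follows for all $k \in \mathbb{N}$.
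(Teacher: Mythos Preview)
Your overall strategy matches the paper's exactly: verify a base case with Baldwin's program, use functoriality to cover smaller $k$, and sharpen the stability argument for this particular family to cover larger $k$. The paper even flags this in the text preceding the theorem, noting that the general bound from Section~\ref{sec:genstability} is too weak and a more precise analysis is needed.

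Where your proposal remains incomplete is precisely the step you identify as the main obstacle, and the paper's resolution of it is somewhat different from what you outline. Rather than tracking how many positive crossings survive in $\widetilde{D_0^k}$ after isotoping the cap through $\alpha'_-$ and into $\Delta^2$, the paper simply observes that for every $k>1$ the link $D_0^k$ is isotopic to the disjoint union of an unknot and the positive Hopf link. Its reduced Khovanov homology over $\mathbb{Z}/2\mathbb{Z}$ is then computed in full (it is supported only in homological gradings $0$ and $2$). Plugging the explicit grading shifts into the long exact sequence~\eqref{eq:ssesn}, with $u = n_-(D_0^k) - n_-(\widehat{\beta_k}) = 6-k$, the two flanking groups $\overline{Kh}^{\,k-7}_*(D_0^k)$ and $\overline{Kh}^{\,k-6}_*(D_0^k)$ vanish as soon as $k-7>2$ and $k-6>2$, i.e.\ for $k\geq 10$. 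This yields the isomorphism $\overline{Kh}^0_{10-k}(\widehat{\beta_{k-1}})\cong \overline{Kh}^0_{9-k}(\widehat{\beta_k})$ taking $\widetilde{\psi}'(\beta_{k-1})$ to $\widetilde{\psi}'(\beta_k)$, and the base case is $\psi'(\beta_9)\neq 0$.

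In short, your ``push the cap further and recount crossings'' plan would in principle lead to the same conclusion, but the paper bypasses that bookkeeping entirely by recognizing $D_0^k$ as a fixed, very small link and reading the threshold directly off its Khovanov homology. That recognition is the missing ingredient in your proposal.
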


Notice that for $k > 12$, $\beta_{k}$ is not quasipositive since the writhe is $12 - k < 0$, 
so Theorem \ref{4braid} gives an infinite family of non-quasipositive braids with non-vanishing $\psi'$.
\begin{proof}

First, using Baldwin's computer program, we determine that 
$$\psi'(\beta_{9})\neq 0.$$
By functoriality this guarantees that $\psi'(\beta_{k})\neq 0$ for all $1 \leq k < 9$. We will induct on $k$ for all $k>9$.

Similar to the notation introduced in Section \ref{sec:genstability}, let $D_{0}^{k}$ and $D_{1}^{k}$ be the knots or links that are obtained by replacing the last crossing of $\beta_{k}$ with its 0- and 1-resolutions, respectively, and taking the closure, where $D_1^{k} = \widehat{\beta_{k-1}}$. We orient $D_{1}^{k}$ with the same orientation as $\widehat{\beta_{k-1}}$ (all strands oriented downwards). We orient $D_{0}^{k}$ so that all three outer strands are oriented downwards above the braid word. We first observe that for all $k > 1$, $D_{0}^{k}$ is isotopic to the disjoint union of the unknot, oriented counter-clockwise, and the Hopf link $\sigma_{1}^{2}$. So a quick computation shows that the reduced Khovanov homology over $\mathbb{Z}/2\mathbb{Z}$ of $D_{0}^{k}$ is as shown in Table \ref{table:Kh}.

\begin{table}
\begin{center}
 \begin{tabular}{||c | c | c | c||} 
 \hline
$j$ & $i=0$ & $i=2$ \\ [0.5ex] 
 \hline\hline
 0 & $\mathbb{Z}/2\mathbb{Z}$  &  \\ 
 \hline
 2 &  $\mathbb{Z}/2\mathbb{Z}$  &   \\
 \hline
 4 & & $\mathbb{Z}/2\mathbb{Z}$ \\
 \hline
 6 &   & $\mathbb{Z}/2\mathbb{Z}$  \\[1ex] 
 \hline

\end{tabular}
\caption{The reduced Khovanov homology over $\mathbb{Z}/2\mathbb{Z}$ of $D_{0}^{k}$.}\label{table:Kh}
\end{center}
\end{table}

In addition, the number of negative crossings in $D_{0}^{k}$ is $6$ for all $k > 1$. We are interested
in $\overline{Kh}(\widehat{\beta_{k}})$ in homological grading $0$ and $q$-grading the self-linking number of $\beta_{k}$ plus one, so $i = 0$ and $j = -4+12-k+1 = 9-k$. The long exact sequence \eqref{eq:ssesn} for reduced Khovanov homology corresponding to taking the resolution of the last negative crossing in the word then takes the following form:
$$ \cdots \longrightarrow \overline{Kh}^{k-7}_{2k-10}(D_{0}^{k}) \longrightarrow \overline{Kh}^{0}_{10-k}(D_{1}^{k}) \longrightarrow \overline{Kh}^{0}_{9-k}(\widehat{\beta_{k}}) \longrightarrow \overline{Kh}^{k-6}_{2k-10}(D_{0}^{k}) \longrightarrow \cdots,$$
where the $u$ from \eqref{eq:ssesn} is $u = n_{-}(D_{0}^{k}) - n_{-}(\widehat{\beta_{k}}) = 6 - k$. 
For $k\geq 10$, both $k-7,  k-6>2$. Using the information on the reduced Khovanov homology over $\Z/2\Z$ of $D_0^k$, the long exact sequence becomes
$$ \cdots \longrightarrow 0 \longrightarrow \overline{Kh}^{0}_{10-k}(D_{1}^{k}) \longrightarrow \overline{Kh}^{0}_{9-k}(\widehat{\beta_{10}}) \longrightarrow 0 \longrightarrow \cdots$$
 The map on chain complexes yields an isomorphism 
$$\overline{Kh}^0_{10-k}(\widehat{\beta_{k-1}}) \cong \overline{Kh}^{0}_{10-k}(D_{1}^{k}) \\\stackrel{\cong}{\longrightarrow} \overline{Kh}^{0}_{9-k}(\widehat{\beta_{k}}).$$ This isomorphism is induced by the map that naturally sends $\widetilde{\psi}'(\beta_{k-1})$ to $\widetilde{\psi}'(\beta_{k})$. Hence since $\psi'(\beta_9) \in \overline{Kh}^{0}_{0}(\widehat{\beta_{9}})$ is non-zero as computed earlier, the isomorphism implies that $\psi'(\beta_k) \in \overline{Kh}^{0}_{{9-k}}(\widehat{\beta_{k}})$ is non-zero for all $k\geq 10$. 

\end{proof}

\section{Bennequin-type inequalities and the maximum self-linking number}\label{maxselflinking}

In this section we prove Theorem \ref{thm:homflypretzel}. We will first give the necessary background on the maximal self-linking number and recall some results which bound the maximal self-linking number using the HOMFLY-PT polynomial of the link. We follow the conventions of the Knot Atlas \cite{kat} for the HOMFLY-PT polynomial. 

Recall the self-linking number $\Sl(\overline{L})$ of a transverse link $\overline{L}$ as defined in Definition \ref{defn:selflinking}.
\begin{defn} \label{d.msl} The \emph{maximal self-linking number}, $\msl(L)$ of a smooth link $L$ is the maximum of $\Sl (\overline{L})$ taken over all transverse link representatives $\overline{L}$ of $L$. 
\end{defn}

Let $P_L(a, z)$ be the HOMFLY-PT polynomial of a smooth link $L$, normalized so that $P=1$ for the unknot and defined by the following skein relation.
\begin{equation} \label{eq:homfly}
\vcenter{\hbox{
\def\svgwidth{.5\columnwidth}
%% Creator: Inkscape inkscape 0.91, www.inkscape.org
%% PDF/EPS/PS + LaTeX output extension by Johan Engelen, 2010
%% Accompanies image file '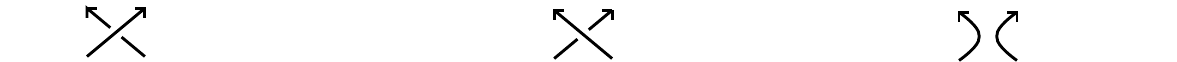' (pdf, eps, ps)
%%
%% To include the image in your LaTeX document, write
%%   \input{<filename>.pdf_tex}
%%  instead of
%%   \includegraphics{<filename>.pdf}
%% To scale the image, write
%%   \def\svgwidth{<desired width>}
%%   \input{<filename>.pdf_tex}
%%  instead of
%%   \includegraphics[width=<desired width>]{<filename>.pdf}
%%
%% Images with a different path to the parent latex file can
%% be accessed with the `import' package (which may need to be
%% installed) using
%%   \usepackage{import}
%% in the preamble, and then including the image with
%%   \import{<path to file>}{<filename>.pdf_tex}
%% Alternatively, one can specify
%%   \graphicspath{{<path to file>/}}
%% 
%% For more information, please see info/svg-inkscape on CTAN:
%%   http://tug.ctan.org/tex-archive/info/svg-inkscape
%%
\begingroup%
  \makeatletter%
  \providecommand\color[2][]{%
    \errmessage{(Inkscape) Color is used for the text in Inkscape, but the package 'color.sty' is not loaded}%
    \renewcommand\color[2][]{}%
  }%
  \providecommand\transparent[1]{%
    \errmessage{(Inkscape) Transparency is used (non-zero) for the text in Inkscape, but the package 'transparent.sty' is not loaded}%
    \renewcommand\transparent[1]{}%
  }%
  \providecommand\rotatebox[2]{#2}%
  \ifx\svgwidth\undefined%
    \setlength{\unitlength}{338.90498047bp}%
    \ifx\svgscale\undefined%
      \relax%
    \else%
      \setlength{\unitlength}{\unitlength * \real{\svgscale}}%
    \fi%
  \else%
    \setlength{\unitlength}{\svgwidth}%
  \fi%
  \global\let\svgwidth\undefined%
  \global\let\svgscale\undefined%
  \makeatother%
  \begin{picture}(1,0.05241179)%
    \put(0,0){\includegraphics[width=\unitlength,page=1]{homfly.pdf}}%
    \put(-0.00235132,0.02862324){\color[rgb]{0,0,0}\makebox(0,0)[lb]{\smash{$aP_{}$}}}%
    \put(0.29574238,0.02702738){\color[rgb]{0,0,0}\makebox(0,0)[lb]{\smash{$-a^{-1}P_{}$}}}%
    \put(0.683218,0.0255099){\color[rgb]{0,0,0}\makebox(0,0)[lb]{\smash{$=zP_{}$}}}%
    \put(0.55540221,0.23779022){\color[rgb]{0,0,0}\makebox(0,0)[lt]{\begin{minipage}{0.56315827\unitlength}\raggedright \end{minipage}}}%
    \put(0.1261297,0.0255099){\color[rgb]{0,0,0}\makebox(0,0)[lb]{\smash{$(a, z)$}}}%
    \put(0.52742212,0.0255099){\color[rgb]{0,0,0}\makebox(0,0)[lb]{\smash{$(a, z)$}}}%
    \put(0.86734041,0.0255099){\color[rgb]{0,0,0}\makebox(0,0)[lb]{\smash{$(a, z)$}}}%
  \end{picture}%
\endgroup%
 }}
\end{equation}
The pictures $\vcenter{\hbox{\def \svgwidth{.03\columnwidth} %% Creator: Inkscape inkscape 0.91, www.inkscape.org
%% PDF/EPS/PS + LaTeX output extension by Johan Engelen, 2010
%% Accompanies image file 's1.pdf' (pdf, eps, ps)
%%
%% To include the image in your LaTeX document, write
%%   \input{<filename>.pdf_tex}
%%  instead of
%%   \includegraphics{<filename>.pdf}
%% To scale the image, write
%%   \def\svgwidth{<desired width>}
%%   \input{<filename>.pdf_tex}
%%  instead of
%%   \includegraphics[width=<desired width>]{<filename>.pdf}
%%
%% Images with a different path to the parent latex file can
%% be accessed with the `import' package (which may need to be
%% installed) using
%%   \usepackage{import}
%% in the preamble, and then including the image with
%%   \import{<path to file>}{<filename>.pdf_tex}
%% Alternatively, one can specify
%%   \graphicspath{{<path to file>/}}
%% 
%% For more information, please see info/svg-inkscape on CTAN:
%%   http://tug.ctan.org/tex-archive/info/svg-inkscape
%%
\begingroup%
  \makeatletter%
  \providecommand\color[2][]{%
    \errmessage{(Inkscape) Color is used for the text in Inkscape, but the package 'color.sty' is not loaded}%
    \renewcommand\color[2][]{}%
  }%
  \providecommand\transparent[1]{%
    \errmessage{(Inkscape) Transparency is used (non-zero) for the text in Inkscape, but the package 'transparent.sty' is not loaded}%
    \renewcommand\transparent[1]{}%
  }%
  \providecommand\rotatebox[2]{#2}%
  \ifx\svgwidth\undefined%
    \setlength{\unitlength}{17.5144927bp}%
    \ifx\svgscale\undefined%
      \relax%
    \else%
      \setlength{\unitlength}{\unitlength * \real{\svgscale}}%
    \fi%
  \else%
    \setlength{\unitlength}{\svgwidth}%
  \fi%
  \global\let\svgwidth\undefined%
  \global\let\svgscale\undefined%
  \makeatother%
  \begin{picture}(1,0.86277612)%
    \put(0,0){\includegraphics[width=\unitlength,page=1]{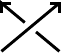}}%
  \end{picture}%
\endgroup%
}}$, $\vcenter{\hbox{\def \svgwidth{.03\columnwidth} %% Creator: Inkscape inkscape 0.91, www.inkscape.org
%% PDF/EPS/PS + LaTeX output extension by Johan Engelen, 2010
%% Accompanies image file 's2.pdf' (pdf, eps, ps)
%%
%% To include the image in your LaTeX document, write
%%   \input{<filename>.pdf_tex}
%%  instead of
%%   \includegraphics{<filename>.pdf}
%% To scale the image, write
%%   \def\svgwidth{<desired width>}
%%   \input{<filename>.pdf_tex}
%%  instead of
%%   \includegraphics[width=<desired width>]{<filename>.pdf}
%%
%% Images with a different path to the parent latex file can
%% be accessed with the `import' package (which may need to be
%% installed) using
%%   \usepackage{import}
%% in the preamble, and then including the image with
%%   \import{<path to file>}{<filename>.pdf_tex}
%% Alternatively, one can specify
%%   \graphicspath{{<path to file>/}}
%% 
%% For more information, please see info/svg-inkscape on CTAN:
%%   http://tug.ctan.org/tex-archive/info/svg-inkscape
%%
\begingroup%
  \makeatletter%
  \providecommand\color[2][]{%
    \errmessage{(Inkscape) Color is used for the text in Inkscape, but the package 'color.sty' is not loaded}%
    \renewcommand\color[2][]{}%
  }%
  \providecommand\transparent[1]{%
    \errmessage{(Inkscape) Transparency is used (non-zero) for the text in Inkscape, but the package 'transparent.sty' is not loaded}%
    \renewcommand\transparent[1]{}%
  }%
  \providecommand\rotatebox[2]{#2}%
  \ifx\svgwidth\undefined%
    \setlength{\unitlength}{17.5144927bp}%
    \ifx\svgscale\undefined%
      \relax%
    \else%
      \setlength{\unitlength}{\unitlength * \real{\svgscale}}%
    \fi%
  \else%
    \setlength{\unitlength}{\svgwidth}%
  \fi%
  \global\let\svgwidth\undefined%
  \global\let\svgscale\undefined%
  \makeatother%
  \begin{picture}(1,0.83575131)%
    \put(0,0){\includegraphics[width=\unitlength,page=1]{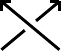}}%
  \end{picture}%
\endgroup%
}}$, and $\vcenter{\hbox{\def \svgwidth{.03\columnwidth} %% Creator: Inkscape inkscape 0.91, www.inkscape.org
%% PDF/EPS/PS + LaTeX output extension by Johan Engelen, 2010
%% Accompanies image file 's0.pdf' (pdf, eps, ps)
%%
%% To include the image in your LaTeX document, write
%%   \input{<filename>.pdf_tex}
%%  instead of
%%   \includegraphics{<filename>.pdf}
%% To scale the image, write
%%   \def\svgwidth{<desired width>}
%%   \input{<filename>.pdf_tex}
%%  instead of
%%   \includegraphics[width=<desired width>]{<filename>.pdf}
%%
%% Images with a different path to the parent latex file can
%% be accessed with the `import' package (which may need to be
%% installed) using
%%   \usepackage{import}
%% in the preamble, and then including the image with
%%   \import{<path to file>}{<filename>.pdf_tex}
%% Alternatively, one can specify
%%   \graphicspath{{<path to file>/}}
%% 
%% For more information, please see info/svg-inkscape on CTAN:
%%   http://tug.ctan.org/tex-archive/info/svg-inkscape
%%
\begingroup%
  \makeatletter%
  \providecommand\color[2][]{%
    \errmessage{(Inkscape) Color is used for the text in Inkscape, but the package 'color.sty' is not loaded}%
    \renewcommand\color[2][]{}%
  }%
  \providecommand\transparent[1]{%
    \errmessage{(Inkscape) Transparency is used (non-zero) for the text in Inkscape, but the package 'transparent.sty' is not loaded}%
    \renewcommand\transparent[1]{}%
  }%
  \providecommand\rotatebox[2]{#2}%
  \ifx\svgwidth\undefined%
    \setlength{\unitlength}{17.5144927bp}%
    \ifx\svgscale\undefined%
      \relax%
    \else%
      \setlength{\unitlength}{\unitlength * \real{\svgscale}}%
    \fi%
  \else%
    \setlength{\unitlength}{\svgwidth}%
  \fi%
  \global\let\svgwidth\undefined%
  \global\let\svgscale\undefined%
  \makeatother%
  \begin{picture}(1,0.83706914)%
    \put(0,0){\includegraphics[width=\unitlength,page=1]{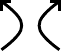}}%
  \end{picture}%
\endgroup%
}}$ indicate smooth links $L_+, L_-$, and $L_0$ where $L_+$ and $L_-$ differ by switching a crossing, and $L_0$ is the link resulting from choosing the oriented resolution at the crossing.  

By \cite{FW87} and \cite{Mor86}, we have the following inequality.
\begin{thm}{(\cite{FW87}, \cite{Mor86})} \label{thm:mslbhomfly}
\[\msl(L) \leq -\deg_a(P_L(a, z))-1, \] 
where $\deg_a(P_L(a, z))$ is the maximum degree in $a$ of $P_L(a, z)$.
\end{thm}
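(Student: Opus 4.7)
The strategy is to reduce the inequality, via Bennequin's theorem, to the classical Morton--Franks--Williams bound on the maximum $a$-degree of the HOMFLY-PT polynomial of a closed braid. Any transverse representative $\overline{L}$ of $L$ is transversely isotopic to some closed braid $\hat{\beta}$ on $b$ strands, and by Definition \ref{defn:selflinking}, $\Sl(\hat{\beta}) = w(\beta) - b$, where $w(\beta) = n_+(\beta) - n_-(\beta)$ is the writhe. Since $P_L(a,z)$ is a smooth link invariant, taking the maximum over transverse representatives reduces the theorem to showing, for every such $\hat{\beta}$, the inequality
\[
\deg_a P_{\hat{\beta}}(a,z) \;\leq\; -w(\beta) + b - 1. \quad (\dagger)
\]
Indeed, substituting $w(\beta) = \Sl(\hat{\beta}) + b$ into $(\dagger)$ immediately yields $\Sl(\hat{\beta}) \leq -\deg_a P_L(a,z) - 1$, and maximizing the left-hand side over all transverse braid representatives gives the theorem.

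To establish $(\dagger)$, I would proceed by induction on $n_-(\beta)$. For the base case $n_-(\beta) = 0$, the braid $\beta$ is positive; here one can argue by a secondary induction on $n_+(\beta)$, repeatedly applying the skein relation \eqref{eq:homfly} at a positive crossing and tracking how the writhe and Seifert circle count change under crossing switches and oriented resolutions. The unknot case ($P=1$, $w=0$, $b=1$) supplies the ground base. For the inductive step, given a negative crossing of $\beta$, rearrange \eqref{eq:homfly} as
\[
P_{\hat{\beta}}(a,z) \;=\; a^{2}\,P_{L_+}(a,z) \;-\; az\,P_{L_0}(a,z),
\]
where $L_+$ is the braid closure with the chosen negative crossing switched to positive (so $n_-$ drops by one and $w$ increases by $2$) and $L_0$ is the oriented resolution at that crossing. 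By the inductive hypothesis applied to $L_+$ and $L_0$, one has $\deg_a P_{L_+} \leq -(w(\beta)+2) + b - 1$ and an analogous bound for $P_{L_0}$; the prefactors $a^{2}$ and $az$ shift both estimates to at most $-w(\beta) + b - 1$, yielding $(\dagger)$ for $\hat\beta$.

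The main obstacle is the bookkeeping in the oriented-resolution step: the oriented smoothing $L_0$ of a crossing in a closed braid is not automatically a closed braid on the same number of strands, so the correct invariant to track through the induction is the Seifert circle count $s(D)$, which for a closed braid diagram equals the strand number $b$ but can change under resolution. This is the reason Morton's original statement bounds $\deg_a P_D$ by $-w(D)+s(D)-1$ for \emph{any} oriented diagram $D$, and $(\dagger)$ is the specialization to closed braid diagrams. Handling this carefully is the technical heart of the argument; once done, the reduction outlined in the first paragraph completes the proof.
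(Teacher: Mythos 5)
The paper does not prove this statement; it is imported verbatim from \cite{FW87} and \cite{Mor86} (with a skein-theoretic treatment attributed to \cite{Ng08}), so there is no internal proof to compare against. Your reduction is the correct classical route: using $\Sl(\hat{\beta}) = w(\beta) - b$ and the fact that every transverse representative is a braid closure, the theorem is exactly the Morton--Franks--Williams bound $\deg_a P \leq -w(D) + s(D) - 1$ specialized to closed braid diagrams, and your inductive step at a \emph{negative} crossing is sound (both $a^2 P_{L_+}$ and $azP_{L_0}$ land at degree at most $-w+b-1$ under the inductive hypothesis, since $L_+$ and $L_0$ each have one fewer negative crossing and are still closed braids on $b$ strands).

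The genuine gap is in your base case. For a positive braid closure the skein relation at a positive crossing reads $P_{\hat{\beta}} = a^{-2}P_{L_-} + a^{-1}zP_{L_0}$: the term $L_0$ is again a positive braid closure with fewer crossings and is covered by your secondary induction, but $L_-$ has the chosen crossing switched to \emph{negative}, so it is not a positive braid and is not covered by the secondary induction; nor is it covered by the primary induction on $n_-$, which at that stage has only been established for $n_-=0$. Bounding $\deg_a P_{L_-}$ there requires precisely the general inequality you are trying to prove, so the double induction as stated is circular and does not terminate. The standard repair (Morton's) is a lexicographic induction on the pair (crossing number, number of crossings that must be switched to reach a descending diagram): an oriented resolution drops the first coordinate, a crossing switch toward the descending configuration drops the second, and the descending base case is an unlink diagram whose HOMFLY-PT polynomial $\left(\frac{a-a^{-1}}{z}\right)^{s(D)-1}$ can be checked directly against $-w(D)+s(D)-1$. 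Separately, the obstacle you flag as the "technical heart" --- that $L_0$ might not be a closed braid --- is in fact a non-issue: the oriented resolution of $\sigma_i^{\pm 1}$ in a braid word simply deletes that letter, so $L_0$ is a closed braid on the same $b$ strands and $s(D)$ is unchanged. The real difficulty is the $L_-$ term above, which your sketch does not address.
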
 Ng also provides a skein-theoretic proof that unifies several similiar inequalities in \cite{Ng08}. The transverse element $\widetilde{\psi}(\beta)$ for a braid representative $\beta$ of $L$ is always supported in the grading $i=0$ and $j=\Sl(\hat{\beta})$ in $Kh(L)$ \cite[Proposition 2]{Pla06}. Recall that by Remark \ref{rem:msl0}, this means that if $Kh^0_j(L) = 0$ for all $j \leq \msl(L)$, then $\psi(\beta) = 0$ for every braid representative $\beta$ of $L$.  

Consider the 3-tangle pretzel knots $K=P(r, -s, -t)$ where $r>0$ is even and $s, t> 0$ are odd. Our convention is illustrated in Figure \ref{fig:pconvention} below. 
\begin{figure}[ht]
\def\svgwidth{.1\columnwidth}
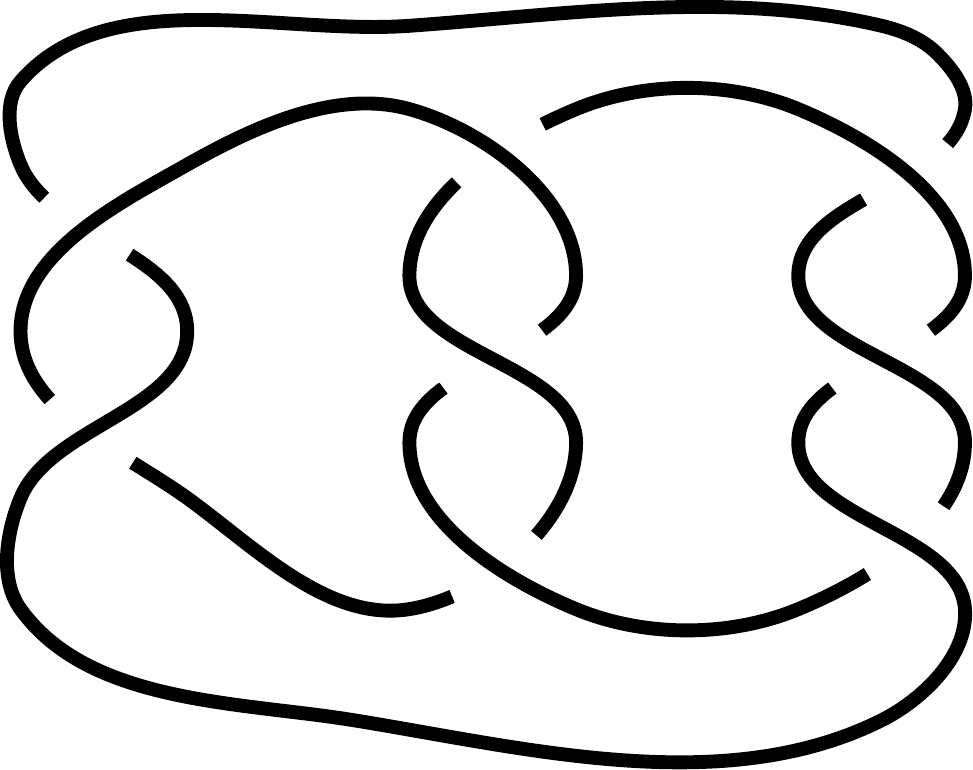
\caption{\label{fig:pconvention} The $P(2, -3, -3)$ pretzel knot.}
\end{figure} 
Since $K$ is a negative knot by the standard pretzel diagram $D$, which means that all the crossings are negative in $D$ with an orientation, there is a single state, the all-1 state, which chooses the $1$-resolution on all the crossings of $D$ and gives the generators for the chain complex at homological grading $i=0$. Recall $|s_1(D)|$ is the number of state circles in the all-$1$ state. Since $K$ is a negative knot, the state graph $s_1(D)$ has no one-edged loops, so $K$ is adequate on one side by definition. It is known, see for example the proof of \cite[Proposition 5.1]{Kho03}, that this implies that in $i=0$, there are only two possible nontrivial homology groups at $j=|s_1(D)|-n_-(D)$ and $j = |s_1(D)|-n_-(D)-2$. It is also possible to see this for these pretzel knots by direct computation. Note that Manion gives an explicit characterization of the Khovanov homology of 3-tangle pretzel knots in \cite{Man14}. 

Now 
\[|s_1(D)| = r+1. \]
Thus 
\[ |s_1(D)|-n_-(D)= 1-s-t,  \]
and $Kh(K)$ can only have nontrivial homology groups for $i=0$ at $j = 1-s-t, 1-s-t-2$.

Before proving Theorem \ref{thm:homflypretzel}, it is helpful to see an example in the $P(2, -5, -5)$ pretzel knot. 

\begin{eg} \label{eg:n255}
The pretzel knot $K=P(2, -5, -5)$. 
$Kh(K)$ has nontrivial homology groups supported in $i=0, j=-11$ and $i=0, j=-9$, and trivial homology groups for all other $j$ when $i=0$. 

It has HOMFLY-PT polynomial \cite{kat}
\begin{align*} P_K(a, z) &= 10 a^{10} - 13 a^{12} + 4 a^{14} + 39 a^{10} z^{2} - 32 a^{12} z^2 + 4 a^{14} z^2 + 
 57 a^{10} z^4 - 27 a^{12} z^4 + a^{14} z^4  \\
 &+36 a^{10} z^6 - 9 a^{12} z^6 + 10 a^{10} z^8 - a^{12} z^8 + a^{10} z^{10}. \end{align*} 
 
Using Theorem \ref{thm:mslbhomfly} gives that $\msl(P(2, -5, -5)) \leq -(14)-1 < -11.$ Therefore, $\psi=0$ for every braid representative of $P(2, -5, -5)$. 
\end{eg}

We generalize the above examples using the computation for the HOMFLY-PT polynomial for torus knots by Jones \cite{Jon87}. For our purpose it is enough to have the following lemma which we prove here by inducting on the defining skein relation.
\begin{lem} \label{lem:toruspqhomfly} Let $T_{2, -q}$ denote the negative $2q$ torus link with all negative crossings. If $q>1$ is odd, then 
\[\deg_a(P_{T_{2, -q}}(a, z)) = q+1, \] with all negative coefficients. 
If $q>1$ is even, then with the orientation given that makes all the crossings negative,
\[\deg_a(P_{T_{2, -q}}(a, z)) = q+1, \]  with all positive coefficients.
\end{lem}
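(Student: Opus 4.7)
The plan is to prove the lemma by induction on $q$, using the defining HOMFLY-PT skein relation \eqref{eq:homfly} applied to a single crossing of the standard braid representative $\widehat{\sigma_1^{-q}}$ of $T_{2,-q}$.

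First I would set up the recursion. Switching a negative crossing of $\sigma_1^{-q}$ to a positive one kills an adjacent $\sigma_1^{-1}$, producing $\widehat{\sigma_1^{-(q-2)}} = T_{2,-(q-2)}$; meanwhile the oriented resolution at a negative crossing is the identity tangle on the two strands, producing $\widehat{\sigma_1^{-(q-1)}} = T_{2,-(q-1)}$. Plugging into the skein relation with $L_- = T_{2,-q}$ and solving gives the recurrence
\begin{equation*}
P_{T_{2,-q}}(a,z) \;=\; a^{2}\, P_{T_{2,-(q-2)}}(a,z) \;-\; a z\, P_{T_{2,-(q-1)}}(a,z).
\end{equation*}
The relevant seed values are $P_{T_{2,-1}} = 1$ (unknot) and $P_{T_{2,0}} = (a-a^{-1})/z$ (2-component unlink, oriented so both closures run downward), from which one directly computes $P_{T_{2,-2}} = a^{3}z^{-1} - a z^{-1} - a z$ and $P_{T_{2,-3}} = -a^{4} + (2+z^{2})a^{2}$, establishing the claim in the base cases $q = 2, 3$: in each case $\deg_{a} = q+1$, and the coefficient of $a^{q+1}$ (a Laurent polynomial in $z$) is respectively $z^{-1}$ and $-1$, of sign $(-1)^{q}$ as predicted.

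The inductive step is the heart of the argument. Assuming the claim for $q-2$ and $q-1$, both $a^{2}P_{T_{2,-(q-2)}}$ and $azP_{T_{2,-(q-1)}}$ have $a$-degree $q+1$, so a priori $\deg_{a} P_{T_{2,-q}} \le q+1$. Writing $p_{k}(z)$ for the coefficient of $a^{k+1}$ in $P_{T_{2,-k}}$, the recurrence restricts to
\begin{equation*}
p_{q}(z) \;=\; p_{q-2}(z) \;-\; z\, p_{q-1}(z).
\end{equation*}
By the inductive hypothesis, all $z$-coefficients of $p_{q-2}$ have sign $(-1)^{q-2} = (-1)^{q}$, and all $z$-coefficients of $p_{q-1}$ have sign $(-1)^{q-1}$, so $-z\, p_{q-1}$ contributes $z$-coefficients of sign $(-1)^{q}$ as well. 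Since the two contributions have matching signs, they cannot cancel: $p_{q}(z)$ is a nonzero Laurent polynomial in $z$ with every coefficient of sign $(-1)^{q}$. This simultaneously yields $\deg_{a} P_{T_{2,-q}} = q+1$ and the required sign statement on the leading $a$-coefficient.

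The only real subtlety is bookkeeping of orientation: one must check that the oriented smoothing of a negative crossing in the downward-oriented braid $\sigma_{1}^{-q}$ is indeed $\sigma_{1}^{-(q-1)}$ (rather than, say, producing a diagram whose natural orientation disagrees with the one used for $T_{2,-(q-1)}$ in the statement). For $q$ odd this is automatic since $T_{2,-q}$ is a knot; for $q$ even the stipulated "all-negative-crossings" orientation is exactly the one inherited from the downward braid orientation, so the recurrence respects the orientation conventions of the lemma, and the induction goes through without modification.
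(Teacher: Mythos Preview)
Your proof is correct and follows essentially the same route as the paper: induction on $q$ via the skein recurrence $P_{T_{2,-q}} = a^{2}P_{T_{2,-(q-2)}} - az\,P_{T_{2,-(q-1)}}$, with the sign pattern of the top-degree $z$-coefficients preventing cancellation at $a^{q+1}$. Your version is somewhat more explicit (you derive the seed values and isolate the leading-coefficient recursion $p_q = p_{q-2} - z\,p_{q-1}$), and the orientation remark is a helpful addition, but the argument is the same.
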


\begin{proof} We give a proof here by induction. The base cases are $q=2$ and $q=3$. We see respectively \cite{kat} that $P_{T_{2, -2}}(a, z)$ has all positive coefficients with the term of the highest $a$-degree given by $+\frac{a^3}{z}$. 
Similarly, $P_{T_{2, -3}}(a, z)$ has all negative coefficients with the term of the highest $a$-degree: $-a^4$.   
For $P_{T_{2, -q}}(a, z)$, where $q>3$, we expand a single crossing by \eqref{eq:homfly}. This gives that 
\begin{equation} \label{eq:thomfly}
P_{T_{2, -q}} = a^2 P_{T_{2, -(q-2)}} - azP_{T_{2, -(q-1)}}.
\end{equation}
Assuming the induction hypothesis, we have $\deg_a P_{T_{2, -(q-2)}} (a, z) = q-1$ with all positive/negative coefficients for even/odd $q-2$. Similarly, we have 
$\deg_a P_{T_{2, -(q-1)}}(a, z) = q$ with all negative/positive leading coefficients for odd/even $q-1$. Plugging this into \eqref{eq:homfly} gives that there is no cancellation between the terms with the maximal $a$-degree $q+1$, and the coefficients are either all positive when $q$ is even, or all negative when $q$ is odd.
\end{proof} 

Now we show Theorem \ref{thm:homflypretzel}, which we reprint here for reference.
\begin{restate} 
Let $K=P(r, -q,  -q)$ be a pretzel knot with $q>0$ odd and $r\geq 2$ even, then $\psi = 0$ for every transverse link representative of $K$. 
\end{restate}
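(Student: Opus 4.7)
The plan is to invoke Remark \ref{rem:msl0} combined with the HOMFLY-PT bound of Theorem \ref{thm:mslbhomfly}, generalizing Example \ref{eg:n255}. The excerpt has already shown that for the standard all-negative pretzel diagram $D$ of $K=P(r,-q,-q)$, one has $n_-(D)=r+2q$ and $|s_1(D)|=r+1$, and that one-sided adequacy confines the nontrivial part of $Kh^0(K)$ to quantum gradings $j \in \{1-2q,\, -1-2q\}$. Since the self-linking number is an integer, to conclude $\psi=0$ for every transverse representative of $K$ it suffices to show $\msl(K) \leq -2q-2$, which by Theorem \ref{thm:mslbhomfly} reduces to proving the lower bound
$$\deg_a\bigl(P_K(a,z)\bigr) \;\geq\; 2q+1.$$

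To prove this lower bound, I would apply the HOMFLY-PT skein relation \eqref{eq:homfly} to a negative crossing in the first (the $r$-twist) tangle of $D$, yielding a recursion of the form
$$P_{P(r,-q,-q)}(a,z) \;=\; a^{2}\, P_{P(r-2,-q,-q)}(a,z) \;-\; a z\, P_{L_0}(a,z),$$
where $L_0$ is the oriented resolution of the chosen crossing. The crucial geometric observation is that in $P(r,-q,-q)$ with $r$ even and $q$ odd, a direct trace of the knot shows that the two strands of the first tangle are oriented antiparallel, so the oriented resolution caps off the first tangle and $L_0$ is isotopic to the torus link $T_{2,-2q}$. By Lemma \ref{lem:toruspqhomfly}, $\deg_a(P_{T_{2,-2q}}) = 2q+1$ and its leading-$a$ coefficients all have a single sign (positive, since $2q$ is even). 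One then inducts on $r$ in increments of two, starting from a base case $r=2$ handled by one further skein expansion reducing to torus links and possibly $P(0,-q,-q)$. At each step, since multiplication by $a^{2}$ and by $az$ both preserve the sign of the top-degree-in-$a$ coefficients, the leading-$a$ contributions from the two summands in the recursion add without cancellation, yielding $\deg_a(P_K) \geq 2q+1$ for all even $r \geq 2$.

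Assembling the pieces, $\msl(K) \leq -\deg_a(P_K) - 1 \leq -2q-2 < -2q-1$, so every nontrivial homology class in $Kh^0(K)$ has quantum grading strictly greater than $\msl(K)$. Remark \ref{rem:msl0} then gives $\psi(K)=0$ for every transverse representative of $K$, completing the proof. The corollary of non-quasipositivity follows since Plamenevskaya showed that $\psi \neq 0$ for closures of quasipositive braids.

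The main obstacle is the correct identification of $L_0$ and the sign bookkeeping for the no-cancellation argument. Establishing $L_0 \simeq T_{2,-2q}$ requires a careful check of strand orientations in $P(r,-q,-q)$; the parity assumptions ($r$ even, $q$ odd) are precisely what force the antiparallel orientation in the first tangle, and hence force the oriented resolution to cap off that tangle rather than merely untwist it. Once that identification is made and the uniform sign pattern of Lemma \ref{lem:toruspqhomfly} is verified to propagate through the recursion, the remainder of the degree estimate is routine.
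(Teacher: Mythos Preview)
Your proposal is correct and follows essentially the same strategy as the paper: apply the HOMFLY-PT skein relation at a crossing in the $r$-twist tangle, identify $L_0\simeq T_{2,-2q}$, induct on even $r$, and combine the resulting $a$-degree bound with Theorem \ref{thm:mslbhomfly} and Remark \ref{rem:msl0}. One minor difference: the paper actually proves the exact equality $\deg_a(P_{P(r,-q,-q)})=2+r+2q$ by induction (with base case $r=2$, where $D_+$ is the connected sum $T_{2,-q}\#T_{2,-q}$, giving $\deg_a=2q+2$), so at every inductive step the term $a^2P_{P(r-2,-q,-q)}$ has strictly larger $a$-degree than $-az\,P_{T_{2,-2q}}$ and no cancellation can occur at the top; your sign-bookkeeping concern is therefore unnecessary, though harmless.
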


\begin{proof}
We apply relation \eqref{eq:homfly} to the top left negative crossing of $P(r, -q, -q)$. Denote the diagram obtained by switching the crossing by $D_+$ and the diagram obtained by resolving the crossing following the orientation by $D_0$. See Figure \ref{fig:npretzel}.
\begin{figure}[ht]
\def\svgwidth{.5\columnwidth}
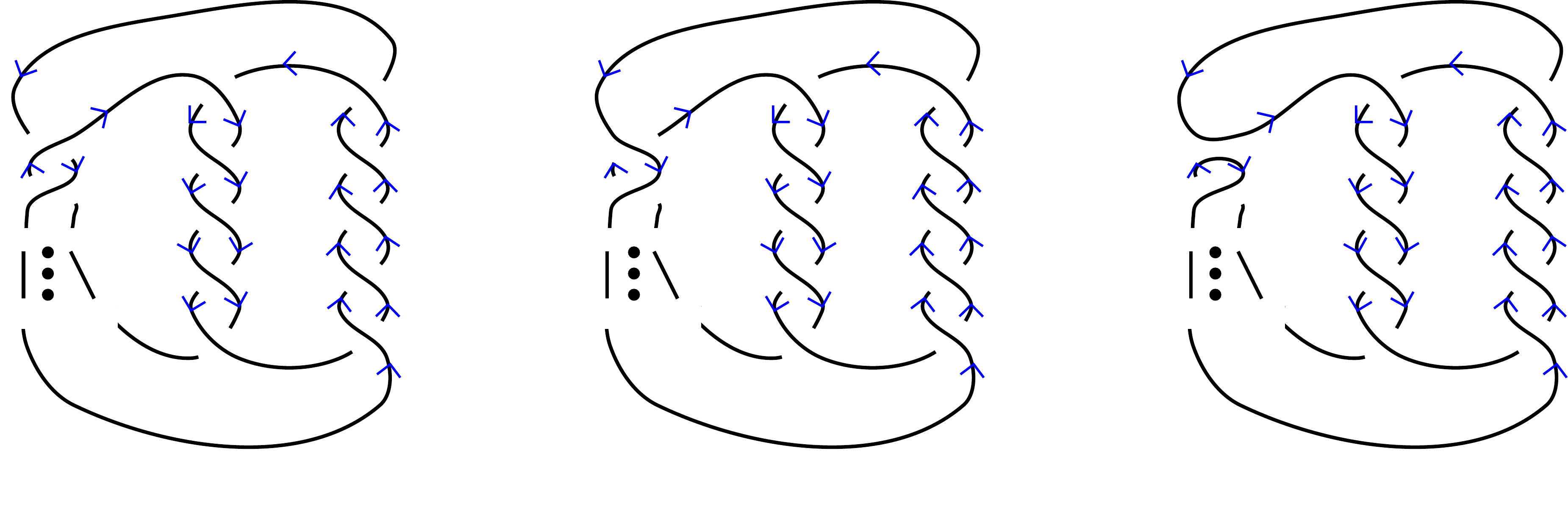
\caption{\label{fig:npretzel} Link diagrams appearing in the relation for the HOMFLY-PT polynomial}
\end{figure} 
 Then we have  
\[ a^{-1}(P_D(a, z)) = a(P_{D_+}(a, z)) -z(P_{D_0}(a, z)),\]
and $D_0$ is the diagram of the torus link $T_{2, -2q}$ with the orientation as in Figure \ref{fig:npretzel}. By Lemma \ref{lem:toruspqhomfly} we know that $\deg_a P_{T_{2, -2q}}(a, z) = 2q+1$. 

We first consider $P(2, -q, -q)$. Switching the top left negative crossing results in $D_+$ being a connected sum of 2 $T_{2, -q}$'s, and $D_0$ is $T_{2, -2q}$. Therefore $\deg_a P_{D_+}(a, z)=2q+2$ as the HOMFLY-PT polynomial of a connected sum is the product of the individual HOMFLY-PT polynomials, and $\deg_a P_{D_0}(a, z) = 2q+1$. This clearly shows 
\[\deg_a P_D(a, z) = 2 + \deg_aP_{D_+}(a, z)=2q+4.  \] 

For even $r>2$ we may now induct on $r$ with the hypothesis that 
\[\deg_a(P_{P(r, -q, -q)}(a, z)) = 2+r+2q. \] 
  Indeed, notice that switching the top left negative crossing of $P(r,-q,-q)$ yields that $D_{+}$ is simply $P(r-2,-q,-q)$ and that $D_{0}$ is still $T_{2,-2q}$. Thus we obtain that
\[\msl(P(r, -q, -q)) \leq  -\deg_a(P_K(a, z))-1 \leq -2-r-2q-1  \]
by Theorem \ref{thm:mslbhomfly}. 

On the other hand, there are only two possible nontrivial homology groups for $i=0$ with $j$-grading equal to $|s_1(D)|-n_-(D) = 1-2q$ and $-1-2q$ in $Kh(P(r, -q, -q))$. We apply Remark \ref{rem:msl0} to finish the proof of the theorem.
\end{proof} 

Recall that any transverse link $L$ with a quasipositive braid representative $\beta$ satisfies that $\psi(L) \neq 0$ \cite{Pla06}. Thus Theorem \ref{thm:homflypretzel} directly implies that no transverse representative of such $P(r,-q,-q)$ has a quasipositive braid representative. We can conclude that as a smooth link, $P(r,-q,-q)$ is not the closure of a quasipositive braid, and so:

\begin{cor}\label{cor:pretzelQP} Every $3$-tangle pretzel knot of the form $P(r,-q,-q)$ with $q>0$ odd and $r\geq 2$ even is not quasipositive. 
\end{cor}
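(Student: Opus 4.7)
The plan is to argue by contradiction, leveraging Plamenevskaya's foundational result (from \cite{Pla06}) that the transverse invariant $\psi$ is non-vanishing for the closure of any quasipositive braid. Suppose that $K = P(r,-q,-q)$ were quasipositive. Then by definition there exists some quasipositive braid $\beta$ whose closure $\hat{\beta}$ is smoothly isotopic to $K$. Viewed as a transverse link in the standard contact $\mathbb{S}^3$, this closure $\hat{\beta}$ is a particular transverse representative of the smooth link type $K$, and Plamenevskaya's result would force $\psi(\hat{\beta}) \neq 0$.

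However, Theorem \ref{thm:homflypretzel}, which we have just proved, asserts precisely that $\psi = 0$ for \emph{every} transverse link representative of $K$. In particular, $\psi(\hat{\beta}) = 0$, which is an immediate contradiction. Hence no quasipositive braid whose closure is $K$ can exist, so $K$ is not quasipositive.

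The key conceptual point — and the reason Theorem \ref{thm:homflypretzel} is the right tool — is that quasipositivity is a property of the smooth link type, while Plamenevskaya's non-vanishing result only constrains one particular transverse representative arising from a chosen quasipositive braid word. It is Theorem \ref{thm:homflypretzel}'s universal quantifier over \emph{all} transverse representatives that allows us to rule out the existence of any such quasipositive braid representative in one stroke. There is no genuine obstacle in this argument beyond correctly threading together these two results; the real content of the corollary lies entirely in Theorem \ref{thm:homflypretzel} and the HOMFLY-PT bound on the maximal self-linking number used to prove it.
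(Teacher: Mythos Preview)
Your proof is correct and follows essentially the same approach as the paper: combine Plamenevskaya's result that quasipositive braids have non-vanishing $\psi$ with Theorem \ref{thm:homflypretzel}'s statement that $\psi$ vanishes for every transverse representative of $K$. The paper phrases this as a direct implication rather than a contradiction, but the content is identical.
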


Recall that $P(r,-q,-q)$ is a negative knot. We thank Peter Feller for the observation that another argument can be made to show that, in general, \emph{any} negative knot that is quasipositive must be the unknot (using the fact that any negative knot is strongly quasinegative \cite{nakamura_positive}, \cite{rudolph_stronglyQP} and facts about the behavior of the Ozsv\'ath-Szab\'o concordance invariant $\tau$ \cite{ozsvath_szabo_tau} for quasipositive and strongly quasinegative knots). Our proof method for Corollary \ref{cor:pretzelQP} is clearly of a very different flavor, as we do not depend on tools from Heegaard Floer homology or four-dimensional topology.  See also \cite{boileau_quasipositive} for a discussion of the strong quasipositivity of 3-tangle pretzel knots, up to mirror images.

To understand these examples better, we consider the FDTCs of some braid representatives of these pretzel knots which do not admit a transverse representative with non-vanishing $\psi$.
\begin{eg}  Table \ref{table:pretzel} gives some examples of pretzel knots satisfying the conditions of Theorem \ref{thm:homflypretzel} and braid representatives, using Hunt's program. \\

\begin{table}
\begin{tabular}{|p{2cm}|p{13cm}|}
\hline
  Knot & Braid representative \\
  \hline
  $P(2, -5, -5)$ & $\sigma_1^{-1} \sigma_2^{-5} \sigma_1^{-1} \sigma_2^{-5}$ \\
  \hline
   $P(4, -5, -5)$ & $\sigma_1^{-1} \sigma_2^{-5} \sigma_3^{-1} \sigma_2^{-5} \sigma_1 \sigma_2 \sigma_3^{-1} \sigma_4 \sigma_3^{-3} \sigma_2^{-1} \sigma_3^{-1} \sigma_4^{-1}$ \\ 
 \hline  
 $P(6, -5, -5)$ & \text{$\sigma_1^{-1} \sigma_2 \sigma_3^{-1} \sigma_4^{-1} \sigma_3^{-1} \sigma_2^{-1} \sigma_3^{-1} \sigma_4 \sigma_5 \sigma_6 \sigma_1 \sigma_2^{-1} \sigma_3^{-1}\sigma_4\sigma_5\sigma_4^{-5}\sigma_3^{-1}\sigma_4^{5}\sigma_5^{-1}\sigma_6^{-1}\sigma_2\sigma_3^{-1}\sigma_4^{-1}\sigma_5^{-1}$} \\
 \hline
 $P(8, -5, -5)$ & $\sigma_1^{-1}\sigma_2^{-1}\sigma_3\sigma_4^{-1}\sigma_5^{-1}\sigma_6^{-1}\sigma_7^{-1} \sigma_4^{-1}\sigma_5^{-5}\sigma_6^{-1}\sigma_3^{-1} \sigma_4^{-1}\sigma_5^{-1}\sigma_4^{-1}\sigma_2\sigma_3^{-1}\sigma_4^{-1}\sigma_5\sigma_6\sigma_7\sigma_8
\sigma_5^{-5}$ \\
& $\sigma_6 \sigma_7\sigma_5\sigma_6\sigma_1\sigma_2\sigma_3^{-1}\sigma_4^{-1}\sigma_5^{-1}\sigma_4^{-1}\sigma_3\sigma_4^{-1}\sigma_5\sigma_6^{-1}\sigma_7^{-1}\sigma_8^{-1}\sigma_2^{-1}\sigma_3$ \\
\hline

\end{tabular}
\vspace{1mm}
\caption{Examples of pretzel knots satisfying the conditions of Theorem \ref{thm:homflypretzel}, together with braid word representatives.}\label{table:pretzel}
\end{table}
\end{eg}

\begin{rem}
Each of these braid representatives has at most a single $\sigma_{1}$ and a single $\sigma_{1}^{-1}$. By Proposition \ref{FDTCestimate}, this implies that each of their FDTCs lies in the interval $[-1,1]$. Notice that every transverse link has \emph{some} braid representative with FDTC in $[-1,1]$, since any $n$-braid that is a positive stabilization of some $(n-1)$-braid has FDTC lying in $[0,1]$. 
\end{rem} 

\begin{que}\label{psizerobraidrep1} Suppose $K$ is a smooth link such that $\psi(\beta) = 0$ for all braid representatives $\beta$ of $K$. Then is the FDTC of each braid representative of $K$ less than or equal to one?
\end{que}

An affirmative answer to Question \ref{psizerobraidrep1} would prove a statement similar in flavor to Theorem \ref{FDTCPlamenevskaya} in the setting of Khovanov homology. In particular, its contrapositive would state that if a link $K$ has some braid representative whose FDTC is strictly greater than one, then it has some transverse representative for which $\psi$ does not vanish.

\bibliographystyle{amsalpha}\bibliography{reference}

\end{document}